\documentclass[a4paper,12pt]{amsart}
\usepackage{url}
\usepackage{latexsym}
\usepackage{lscape} 
\usepackage[top=30truemm,bottom=35truemm,left=22truemm,right=22truemm]{geometry}
\usepackage{pdflscape}
\usepackage{float}
\usepackage{amssymb}
\usepackage{amscd}
\usepackage{graphicx}
\usepackage[normalem]{ulem}
\usepackage{enumitem}
\usepackage{comment}
\usepackage{tikz, tikz-cd}
\usetikzlibrary{decorations.markings}
\tikzset{
  midarrow/.style={
    postaction={decorate},
    decoration={markings, mark=at position 0.5 with {\arrow{stealth}}}
  }
}
\usepackage[all]{xy}
\usepackage{subfiles} 
\usepackage[extrasp=0em,mono]{inconsolata}  
\usepackage[backend=biber,  
style=alphabetic,
  maxbibnames=99,
  maxcitenames=6,
sorting=nyt, 
url=false, 
isbn=false,
eprint=true,
doi=false,
]{biblatex}
\AtEveryBibitem{%
  \clearlist{language}%
  \clearfield{edition}%
  \clearname{translator}%
  \clearfield{number}%
}
\renewbibmacro{in:}{}
\DeclareFieldFormat
  [article,incollection,inproceedings]
  {title}{#1}

\usepackage{soul,cancel}

\newcommand{\skipover}[1]{}

\newtheorem{thm}[equation]{Theorem}
\newtheorem{lem}[equation]{Lemma}
\newtheorem{cor}[equation]{Corollary}

\theoremstyle{remark}
\newtheorem{rem}[equation]{Remark}

\theoremstyle{definition}

\numberwithin{equation}{section}

\newcommand{\R}{\mathbb{R}}
\newcommand{\C}{\mathbb{C}}

\newcommand{\rank}{\operatorname{rank}}
\newcommand{\End}{\operatorname{End}}

\newcommand{\Ad}{\operatorname{Ad}}
\newcommand{\ad}{\operatorname{ad}}

\newcommand{\lie}[1]{\mathfrak{#1}}
\newcommand{\Lie}{\operatorname{Lie}}

\newcommand{\half}{\frac{1}{2}}

\newcommand{\be}{\begin{equation}}
\newcommand{\beu}{\begin{equation*}}

\newcommand\Cl {\mathrm{Cl}} 
\newcommand\ioa{\ioa}

\newcommand{\bbar}{\,\big|\,}

\usepackage{graphicx,color,cancel}

\newcommand\la{{\lambda}}

\newcommand\twedge{\textstyle{\bigwedge}}

\newcommand\Sp{\operatorname{Sp}}
\newcommand\USp{\operatorname{USp}}

\newcommand\stab{\operatorname{Stab}}

\newcommand\GL{\operatorname{GL}}
\newcommand\SO{\operatorname{SO}}
\newcommand\OO{\operatorname{O}}
\newcommand\UU{\operatorname{U}}

\newcommand\Mat{\operatorname{Mat}}
\newcommand\Sym{\operatorname{Sym}}
\newcommand\Alt{\operatorname{Alt}}
\newcommand\Her{\operatorname{Her}}

\newcommand\Gr{\operatorname{Gr}}
\newcommand\im{\operatorname{im}}
\newcommand\Ker{\operatorname{ker}}

\newcommand\Cas{\operatorname{\Omega}}

\newcommand{\pf}{\begin{proof}}
\newcommand{\epf}{\end{proof}}
\newcommand{\eq}{\begin{equation}}
\newcommand{\eeq}{\end{equation}}
\newcommand{\eqn}{\begin{equation*}}
\newcommand{\eeqn}{\end{equation*}}

\newcommand{\fra}{\mathfrak{a}}
\newcommand{\frb}{\mathfrak{b}}

\newcommand{\frg}{\mathfrak{g}}

\newcommand{\frk}{\mathfrak{k}}
\newcommand{\frl}{\mathfrak{l}}
\renewcommand{\frm}{\mathfrak{m}}
\newcommand{\frn}{\mathfrak{n}}

\newcommand{\frp}{\mathfrak{p}}
\newcommand{\frq}{\mathfrak{q}}
\newcommand{\frr}{\mathfrak{r}}
\newcommand{\frs}{\mathfrak{s}}
\newcommand{\frt}{\mathfrak{t}}

\newcommand{\bbC}{\mathbb{C}}

\newcommand{\bbH}{\mathbb{H}}

\newcommand{\HLGr}{\operatorname{HLGr}}

\newcommand{\smat}{\left(\begin{smallmatrix}}
\newcommand{\esmat}{\end{smallmatrix}\right)}
\newcommand{\pmat}{\begin{pmatrix}}
\newcommand{\epmat}{\end{pmatrix}}

\newcommand{\bbP}{\mathbb{P}}

\newcommand{\frP}{\mathfrak{P}}

\newcommand{\transpose}[1]{\,{}^t{#1}}

\newcommand{\wzerol}{w_0^{\frl}}
\newcommand{\wzerog}{w_0^{\frg}}

\title[Symmetric spaces and skew multiplicity free representations]{Symmetric spaces and skew multiplicity free representations}
\newcommand{\version}{Ver.~0.0}
\newcommand{\setversion}[1]{\renewcommand{\version}{Ver.~{#1}}}
\setversion{0.01 [2025/11/17, 13:43]}
\setversion{0.02 [2026/03/01 15:30:10 JST]}
\setversion{0.10503 [2026/05/03 23:59:45 JST]}
\setversion{0.260505 [2026/05/05 13:59:06 JST]}
\setversion{0.260809 [2026/08/09 15:31:58 JST]}
\setversion{0.3 [2026/09/04 13:55:11 JST]}
\setversion{0.4 [2026/09/11 14:57:33 JST]}
\setversion{0.5 [2026/09/12 13:55:11 JST]}

\subjclass[2020]{primary: 14M27, 57T15, 53C35. secondary: 14M15, 22E47, 32M15, 15A66}
\keywords{symmetric space, de Rham cohomology, skew multiplicity free action, spherical action, exterior algebra, invariants, Grassmannian}

\author{Kieran Calvert}
\address[Calvert]{School of Mathematical Sciences, Charles Carter Building, Lancaster University, Lancaster, UK}
\email{kieran.calvert@lancaster.ac.uk}
\author{Kyo Nishiyama}
\address[Nishiyama]{Department of Mathematics, Aoyama Gakuin University, Fuchinobe 5-10-1, Chuo-ku,
Sagamihara 252-5258, Japan}
\email{kyo.nishiyama@gmail.com}
\author{Pavle Pand\v zi\'c}
\address[Pand\v zi\'c]{Department of Mathematics, Faculty of Science, University of Zagreb, Bijeni\v cka 30, 10000 Zagreb, Croatia}
\email{pandzic@math.hr}
\thanks{K.~Calvert is supported by JSPS fellowship programme PE25758}
\thanks{K.~Nishiyama is supported by JSPS KAKENHI Grant Number \#{25K06938}.}
\thanks{P.~Pand\v{z}i\'{c} is supported by "Implementation of cutting-edge research and its application as part of the Scientific Center of Excellence for Quantum and Complex Systems, and Representations of Lie Algebras", Grant No. PK.1.1.10.0004, co-financed by the European Union through the European Regional Development Fund - Competitiveness and Cohesion Programme 2021- 2027., by the Croatian Science Foundation (HRZZ), grant no. IP-2025-02-6514, and by the European Union – NextGenerationEU through the National Recovery and Resilience Plan 2021-2026. Institutional grant of University of Zagreb Faculty of Science (IK IA 1.1.3. Impact4Math).}
\date{Current \version, compiled on \today}

\newcommand{\itG}{G}
\newcommand{\itK}{K}
\newcommand{\itP}{P}
\newcommand{\itA}{A}
\newcommand{\itB}{B}
\newcommand{\itM}{M}
\newcommand{\itL}{L}
\newcommand{\itN}{N}

\begin{document}

\maketitle

\begin{abstract}
Let $ G $ be a non-compact connected reductive real Lie group 
and $ P $ its parabolic subgroup.   
We give a list of equivalent conditions for $ P $ to have an abelian unipotent radical.  
These conditions are related to skew multiplicity free and multiplicity free actions, 
and using the theory of Clifford algebra and the cubic Dirac operator we deduce formulas for 
the de Rham cohomology of compact symmetric spaces $ G/P $.
\end{abstract}

\section{Introduction}

Let $ G $ be a non-compact connected reductive real Lie group, with a maximal compact subgroup $ K $, and 
denote by $ G = K A N_0 $ the Iwasawa decomposition, where $ A $ is a maximally split torus and $ N_0 $ the associated maximal unipotent subgroup.  Let us fix a minimal parabolic subgroup $ P_0 = M A N_0 $, where $ M = Z_K(A) $, the centralizer of $ A $ in $ K $.  
We denote the Cartan involution fixing $ K $ by $\theta$.  
The Lie algebra of $ G $ is denoted by the corresponding German small letter $ \lie{g} $ (for other subgroups we follow the same notational convention) and we denote the differential of $ \theta $ again by $\theta$. Thus $ \theta $ is an involution on $ \frg $.  

In this paper, we consider a parabolic subgroup $ P $ of $ G $ with unipotent radical $ N $.  
Without loss of generalities, we can assume $ P $ contains $ P_0 $ so that $ N \subset N_0 $.  
We denote the standard Levi subgroup of $ P $ by $ L $ so that $ P = L N $ gives a Levi decomposition.
$ L $ naturally acts on $ \frn $, the nilpotent radical of the Lie algebra of $ P $, by the adjoint action.

If the unipotent radical $ N $ of $ P $ is abelian (or equivalently to say, the nilpotent radical $ \frn $ is abelian), 
so many miracles occur, which are listed in Theorem \ref{thm:SSP.SMF.MF.theorem} and reproduced briefly here (alphabetical indicators of the items are synchronized with those in the theorem):
    \begin{enumerate}[label={\makebox[3ex][c]{\upshape(\alph*)}}]
\setcounter{enumi}{1}
        \item $\itG/\itL$ is symmetric. \label{intropointb}
        \item Define $ R = K \cap P $, 
then $ R $ is a symmetric subgroup of $ K $, i.e., $ \itK/R $ is a compact symmetric space. \label{intropointc}
        \item $\frn_\C = \frn\otimes_\R \C$ is a skew multiplicity free $\itL_\C$ module under the adjoint action. 
In other words, $ \twedge(\frn_\C) $ is a multiplicity free $ L_\C $-module.  \label{intropointd}
        \item The dimension of $\twedge(\frn_\C \oplus \theta \frn_\C)^{\itL_\C}$ is equal to $|W_{\frg_\C}|/|W_{\frl_\C}|$, 
where $ W_{\frg_\C} $ and $ W_{\frl_\C} $ are Weyl groups. \label{intropointe}
        \item The $ \itL_\C $ module $\twedge(\frn_\C)$ has $|W_{\frg_\C}|/|W_{\frl_\C}|$ irreducible submodules. \label{intropointf}
        \item The Casimir $\Cas_{\frl_\C}$ of $ \frl_\C $ acts by scalar 
on the exterior algebra $\twedge(\frn_\C \oplus \theta \frn_\C)$.\label{intropointg}
        \item The $\frn$ cohomology has zero differential.\label{intropointh}
        \item $ \itG_\C/ \itP_\C $ is a compact Hermitian symmetric space.\label{intropoint.i}
\item The $ \itL_\C $-module $ \frn_\C $ is multiplicity free (or spherical), i.e., the polynomial ring $ \C[\frn_\C] $ is a multiplicity free $ L_\C $-module. \label{intropointj}
\item $ \itG_\C/ \itP_\C $ is an $ \itL_\C $-spherical variety.\label{intropointk}
\item If $ \itB_{\itL_\C} $ denotes a Borel subgroup of $ L_\C $, 
the double flag variety $ \itL_\C/ \itB_{\itL_\C} \times \itG_\C/ \itP_\C $ is of finite type.\label{intropointl}
    \end{enumerate}
Moreover, the conditions \ref{intropointb} -- \ref{intropoint.i} are equivalent and they imply that $ \frn $ is abelian.  

From the above claims, if the unipotent radical is abelian, it is transparent that $ P $ is a maximal parabolic subgroup and hence 
$ G/P $ is a real Grassmannian manifold (see \cite{CNP.arxiv2023}, for example).  

Some remarks are in order (see Remark \ref{remark:to.theorem.P.with.abelian.nilradical} after the theorem for more details).  
Relation to the symmetric spaces \ref{intropointb} and \ref{intropointc} are well known, which appears in \cite[\S{8.2}]{Kostant.I.1961} and \cite{TK1968}, for example.  See also \cite{Nagano.1965} and \cite{Kobayashi.Nagano.1964,Kobayashi.Nagano.1965}.  
The condition that the nilradical $ \frn_\C $ is a skew multiplicity free $ L_\C $-module was obtained 
by Kostant \cite[Corollary 8.4]{Kostant.I.1961} in the case where $ G $ is complex reductive (and also \ref{intropoint.i} is obtained in \S 8.2, \textit{ibid}.), 
and this fact is also used extensively by Howe to classify skew multiplicity free actions \cite{Howe.SchurLecture.1995}.  
The conditions \ref{intropointe} -- \ref{intropointg} are related to the theory of Clifford algebras and cubic Dirac operators \cite{Kostant.cubic.1999,Goette.MZ.1999}. The statement that $\frn$ being abelian implies \ref{intropointh} was proved by Kostant \cite{Kostant.I.1961}. The conditions \ref{intropoint.i} and \ref{intropointj} are studied by Richardson-Rohrle-Steinberg in the case of algebraic group $ G $ over an algebraically closed field. The statements \ref{intropointj} and \ref{intropointl} are a rephrase of \ref{intropointk} in a sense. For this, see \cite{Fresse.Nishiyama.Overview}.

Thus many of these properties are known and studied in many places separately in a somewhat ad hoc manner.  
Our treatment of these properties related to $ P $ with abelian unipotent radical is uniform and general.  
The proofs are natural and simple, not depending on case-by-case analysis.  
So in a sense, Theorem \ref{thm:SSP.SMF.MF.theorem} is one of our main results in this paper.

However, there is a lot more to say about the Grassmannian $ G/P $, where $ P $ has abelian unipotent radical.  
We discuss the de Rham cohomology of $ G/P $ and describe it in terms of the action of the cubic Dirac operator and 
the $ L_\C $-module structure on $ \twedge(\frn_\C) $.  
The ring structure naturally comes from the exterior product of invariants of $ R = K \cap P = K \cap L $.  
Since $ R $ is a symmetric subgroup of $ K $ and $ G/P \simeq K/R $, 
$ R $ invariants of the multiplicity free representation $ \twedge(\frn_\C) $ of $ L_\C $ can be described in a beautiful manner.  Cartan and Borel gave a uniform description of the de Rham cohomology of a homogeneous space $K/R$ if $K$ and $R$ are both compact and connected and the pair is Cartan, which symmetric spaces always are  \cite{Cart50,Borel.1953}. In  Theorem \ref{thm:CartanBorelforDisconnected} we extend this description of the de Rham cohomology of symmetric pairs $K/R$ to the case when only $K$ is required to be compact and connected, while $R$ can be an arbitrary, possibly disconnected, closed subgroup.  
We make these results precise in \S \ref{sec:deRham.cohomology} and \S \ref{sec:CartanBorelforDisconnected}, and give many interesting examples in \S \ref{section:examples}.
Although it seems the explicit results by calculation are known to experts, 
our emphasis here is the uniform description of the cohomology ring using only representation theory of $ L $ and $ R $.  
We believe that our proofs of these results are of interest because they are accessible to anyone with knowledge of basic representation theory.

\section{Preliminaries}

Let $\itG$ be a non-compact connected reductive real Lie group, with a maximal compact subgroup $\itK$ and associated Cartan involution $\theta$. Let $\frg = \Lie(\itG)$ be the Lie algebra of $\itG$. We denote the differential of the Cartan involution by the same letter, so that $\theta$ is an involution on $\frg$, such that $\Lie(\itK) = \frk = \frg^\theta$, $\frp = \frg^{-\theta}$ and $\frg = \frk \oplus \frp$. 
Let $\fra$ be a maximal abelian subalgebra of $\frp$ and let $\Delta =\Delta(\frg,\fra)$ be the restricted roots of the action of $\fra$ on $\frg$.   
We fix a set of positive roots $ \Delta^+ \subset \Delta$, 
and consequently the simple system of roots $\Pi \subset \Delta^+$.  
Define $\frn_0 = \sum_{\alpha \in \Delta^+}\frg_\alpha$, we arrive at the Iwasawa decompositions for $\frg$ and $\itG$;
\[ \frg = \frk \oplus \fra \oplus \frn_0, \qquad \itG = \itK \itA \itN_0,\]
with $\itA = \exp(\fra), \itN_0= \exp(\frn_0)$. Let $\frm = \{x \in \frk: [x,\fra] = 0\} $ 
and $ \itM = Z_{\itK}(\fra) $, which is not connected in general.   
The restricted root decomposition is
\[
\frg = \frm \oplus \fra \oplus \bigoplus_{\alpha \in \Delta} \frg_\alpha.  
\]
For a fixed choice of positive roots, $\itG$ has a unique minimal parabolic subgroup containing $\itA\itN_0$, denoted by $\itP_0 = \itM \itA \itN_0$. Every parabolic subgroup is conjugate to a unique parabolic subgroup $\itP$ containing $\itP_0$ and we have the following construction of such parabolic subgroups.

\begin{lem}
    There is a bijection between the following sets 
    \[ \{ \itP: \itP \supset \itP_0\} \leftrightarrow \{ J \subset \Pi\} .\]
\end{lem}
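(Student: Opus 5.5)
We construct the bijection explicitly in both directions and check that the two maps are mutually inverse.

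\emph{From subsets to subgroups.} Given $J \subset \Pi$, let $\langle J \rangle \subset \Delta$ be the set of roots that are integral linear combinations of elements of $J$. Set
\[
\fra_J = \{ H \in \fra : \alpha(H) = 0 \text{ for all } \alpha \in J\}, \qquad \frl_J = \frm \oplus \fra \oplus \bigoplus_{\alpha \in \langle J\rangle} \frg_\alpha ,
\]
and $\frn_J = \sum_{\alpha \in \Delta^+ \setminus \langle J\rangle} \frg_\alpha$. Then $\frl_J = Z_\frg(\fra_J)$, and $\frn_J$ is the sum of the positive eigenspaces of $\ad H_J$ for any $H_J \in \fra_J$ with $\alpha(H_J) > 0$ for all $\alpha \in \Pi \setminus J$. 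Consequently $\frp_J = \frl_J \oplus \frn_J$ is a subalgebra containing $\frm \oplus \fra \oplus \frn_0$, and $\frn_J$ is an ideal of $\frp_J$. On the group level we define
\[
\itL_J = Z_\itG(\fra_J), \qquad \itN_J = \exp \frn_J, \qquad \itP_J = \itL_J \itN_J = N_\itG(\frp_J).
\]
Since $\itM = Z_\itK(\fra) \subset Z_\itG(\fra_J)$, we get $\itP_J \supset \itM\itA\itN_0 = \itP_0$.

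\emph{From subgroups to subsets.} Given $\itP \supset \itP_0$, its Lie algebra $\frp$ contains $\frm \oplus \fra \oplus \frn_0$ and is $\ad\fra$-stable. Hence $\frp = \frm \oplus \fra \oplus \frn_0 \oplus \bigoplus_{\alpha \in S} \frg_{-\alpha}$ for some $S \subset \Delta^+$. We set $J = \Pi \cap S$. The key step is to show $S = \Delta^+ \cap \langle J \rangle$. First, $S$ is closed under brackets with $\frn_0$. Second, $[\frg_\beta, \frg_{-\alpha}] \neq 0$ whenever $\alpha - \beta$ is a root, by the standard $\frsl_2$-triple argument for restricted roots. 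Together these let us descend: if $\alpha \in S$ is not simple, choose $\beta \in \Pi$ with $\alpha - \beta \in \Delta^+$; then $\alpha - \beta \in S$. Induction on height gives $S \subset \langle J \rangle$. Conversely, $\frg_{-\alpha}$ for $\alpha \in J$ generate $\bigoplus_{\alpha\in\Delta^+\cap\langle J\rangle}\frg_{-\alpha}$ together with $\frn_0$, so $S \supset \Delta^+ \cap \langle J\rangle$. Therefore $\frp = \frp_J$.

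\emph{Groups.} It remains to show $\itP = \itP_J$ as groups, not merely as Lie algebras. This is the main obstacle, because $\itM$ is disconnected and parabolic subgroups are not connected in general. Here I would use the definition of parabolic subgroup as $N_\itG(\frp)$, or equivalently the Bruhat-type fact that any closed subgroup containing $\itP_0$ is a union of $\itP_0$-double cosets $\itP_0 w \itP_0$, with $w$ in the little Weyl group $W(\frg,\fra)$. From $\frp = \frp_J$, the cosets occurring are exactly those with $w \in W_J$, which is generated by reflections in $J$. Since $\itP_J = \bigcup_{w\in W_J} \itP_0 w \itP_0$, this yields $\itP = \itP_J$.

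Injectivity of $J \mapsto \itP_J$ follows because $J$ is recovered as the set of simple roots $\alpha$ with $\frg_{-\alpha} \subset \frp_J$. Surjectivity is exactly the previous paragraph.
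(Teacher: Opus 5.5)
Your strategy is the paper's. Your $\frp_J$ is the sum of the nonnegative eigenspaces of $\ad H_J$, which is the paper's $\frl\oplus\frn$ built from $h$, and in the reverse direction you read off $J$ from the restricted-root decomposition of $\Lie(\itP)$. Your ``key step'' $S=\Delta^+\cap\langle J\rangle$ is exactly what the paper leaves implicit when it asserts that an $h$ with properties (a), (b) recovers $\frP$. Your attention to the group level also goes beyond the paper. However, two steps of your Lie algebra argument do not go through as written.

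\emph{The all-or-nothing property.} $\ad\fra$-stability only gives $\Lie(\itP)=\frm\oplus\fra\oplus\frn_0\oplus\bigoplus_{\alpha\in\Delta^+}\bigl(\Lie(\itP)\cap\frg_{-\alpha}\bigr)$. Restricted root spaces can have dimension $>1$ (e.g.\ $\dim\frg_\alpha=n-1$ in $\mathfrak{so}(n,1)$), so your ``hence'' producing full root spaces $\frg_{-\alpha}$ is unjustified. You need this later, because your descent and your ``conversely'' step only produce \emph{nonzero} brackets. The property is true. If $0\neq Y\in\Lie(\itP)\cap\frg_{-\alpha}$, complete $Y$ to an $\mathfrak{sl}_2$-triple using a multiple of $\theta Y$ and some $H\in\fra$ with $\alpha(H)=2$. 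Then $\ad(Y)^2$ maps the $2$-eigenspace of $\ad H$ isomorphically onto the $(-2)$-eigenspace and sends $\frg_\beta$ into $\frg_{\beta-2\alpha}$, so $\ad(Y)^2\frg_\alpha=\frg_{-\alpha}$. Since $\frg_\alpha\subset\frn_0\subset\Lie(\itP)$, this forces $\frg_{-\alpha}\subset\Lie(\itP)$.

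\emph{The descent.} You only record $\alpha-\beta\in S$. Induction then gives $\alpha-\beta\in\langle J\rangle$, which says nothing about $\beta$ when $\beta$ does not occur in $\alpha-\beta$, so $S\subset\langle J\rangle$ does not follow. You also need $\beta\in S$. Apply your own nonvanishing fact to the roots $\alpha$ and $\alpha-\beta$, whose difference $\beta$ is a root. Since $\frg_{\alpha-\beta}\subset\frn_0$, this gives $0\neq[\frg_{\alpha-\beta},\frg_{-\alpha}]\subset\Lie(\itP)\cap\frg_{-\beta}$. Hence $\beta\in\Pi\cap S=J$, and the induction closes.

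\emph{The group level.} The normalizer route is fine, given that $\frp_J$ is self-normalizing and $N_\itG(\frp_J)=\itL_J\itN_J$. The Bruhat route, however, asserts its main point without proof. What is needed is this. Suppose $\itP$ contains a representative $\tilde w\in N_\itK(\fra)$ of $w$. Then $\Ad(\tilde w)$ preserves $\frp_J$, so $w$ permutes $\langle J\rangle$ and $\Delta^+\setminus\langle J\rangle$. Hence $w^{-1}H_J$ is again dominant, so $w^{-1}H_J=H_J$, and Chevalley's lemma gives $w\in W_J$. The reverse inclusion holds because $\itP$ contains the identity component of $\itP_J$, which contains representatives of $s_\alpha$ for $\alpha\in J$.
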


\begin{proof}
This lemma is fairly well known, but since we need the construction below, we give a brief account.  

Consider $J \subset \Pi$ and choose $h \in \fra$ such that
\[\alpha_j(h) = \begin{cases} 
0 & \alpha_j \in J, \\
>0 & \alpha_j \in \Pi\setminus J.
\end{cases}\]
Define $\frl = \{ x \in \frg : [h,x] = 0\}$. 
Then $\frl$ contains a Cartan subalgebra of $\frg$, hence $ \frl $ is its own normalizer, 
which implies the analytic subgroup generated by $ \exp(\frl)$ is closed, hence a Lie subgroup of $\itG$.  
In fact, the normalizer of a subspace in $ \frg $ is defined by polynomial equations so it is closed (see  \cite[Remark 5.1]{Kostant.I.1961}).
We denote this subgroup by $\itL = Z_{\itG}(h) $, which contains $ M $ as a subgroup by definition.  Note that $\itL$ again need not be connected, and its connected component is the analytic subgroup corresponding to $ \frl $.  
Let $\frn = \bigoplus_{\substack{
\alpha(h) >0}} \frg_\alpha$ which is a nilpotent Lie subalgebra of $\frg$, and define $\itN = \exp(\frn)$.  Since 
\begin{equation*}
\frl \oplus \frn = \frm \oplus \fra \oplus \frn_0 \oplus \bigoplus_{\alpha \in \Delta^-:\; \alpha(h) = 0} \frg_{\alpha} 
\end{equation*}
then it follows that $\itP = \itL \itN$ contains $\itP_0$.   
We denote the Lie algebra of $ \itP $ by $ \frP $ since we use $ \frp $ for the Cartan space orthogonal to $ \frk $.  

For the reverse direction, let us consider the Lie algebra of $ \itP $.  Since it contains $ \fra $, it has a root space decomposition.  
We can pick $ h \in \fra $ satisfying 
(a)\ $ \alpha(h) \geq 0 $ for any $ \alpha \in \Delta^+ $; and 
(b)\ if $ \alpha $ is a root of $ \frP $ and $ \alpha(h) > 0 $, then $ -\alpha $ is not a root of $ \frP $.  
For this $ h $, we can recover $ \frP = \frl \oplus \frn $ exactly in the same way as above.  
This element $ h $ defines $ J \subset \Pi $.
%
\end{proof}

Let $\itP$ be a parabolic subgroup of $\itG$ containing $\itP_0$.  Using the notation in the proof of the above lemma, 
then $\Lie(\itP) =\frP= \frl \oplus \frn$.  
Note that $ \theta(h) = - h $ since $ h \in \frg^{-\theta} $, and this implies $ \theta(\frg_{\alpha}) = \frg_{-\alpha} $ for $ \alpha \in \Delta $.  Therefore we conclude that 
\begin{equation*}
\theta\frn = \bigoplus_{\alpha \in \Delta^+:\; \alpha(h) >0} \theta(\frg_{\alpha}) 
=  \bigoplus_{\alpha \in \Delta^-:\; \alpha(h) <0} \frg_{\alpha}  
\end{equation*}
and arrive at the following decomposition of $\frg$:
\[\frg = \frl \oplus \frn \oplus \theta \frn.\]
Since $\fra \subset \frg^{-\theta}$ and $\frm \subset \frk = \frg^{\theta} $, then $\theta|_{\fra} = -1$ and $\theta|_\frm = 1$, furthermore
\begin{equation}\label{eq:description.of.lie.L}
\frl = \frm \oplus \fra \oplus \bigoplus_{\alpha:\; \alpha(h)=0} \frg_\alpha
\end{equation}
is $\theta$ stable. Let $\frs = \frn \oplus \theta \frn$. 

\begin{lem}
Let $\itP$ be a parabolic subgroup of $\itG$ and put $R = \itK \cap \itP$.   
Then there is an isomorphism of homogeneous spaces 
\[\itG/\itP \cong \itK/R,\]
and $R = \itL^\theta$, i.e., $ R $ is a maximal compact subgroup of $ L $. (Note that $ L $ is not connected in general.)
\end{lem}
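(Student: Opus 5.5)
The plan is to derive the isomorphism $\itG/\itP \cong \itK/R$ from the Iwasawa decomposition, and then to identify $R$ with $\itL^\theta$ using the Langlands decomposition of $\itP$.

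\textbf{Step 1: $\itK$ acts transitively on $\itG/\itP$.} Since $\itP \supset \itP_0 \supset \itA\itN_0$ and $\itG = \itK\itA\itN_0$, we have $\itG = \itK\itP$. So the restriction to $\itK$ of the natural action on $\itG/\itP$ is transitive, and the stabilizer of the base point $e\itP$ is $\itK\cap\itP = R$. The orbit map then gives a continuous bijection $\itK/R \to \itG/\itP$. It is $\itK$-equivariant and smooth, since $R$ is closed in $\itK$, being the intersection of two closed subgroups. Its differential at the base point is a linear isomorphism. Indeed, $\frk + \frP = \frg$ holds because $\theta\frn \subset \frk + \frn$: write $\theta x = (x+\theta x) - x$. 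Moreover, the kernel of $\frk \to \frg/\frP$ is $\frk\cap\frP$, which is the Lie algebra of $R$. A bijective equivariant map between homogeneous spaces with bijective differential is a diffeomorphism (equivalently, $\itK/R$ is compact and the map is a continuous bijection onto a Hausdorff space, and equivariance transports the local diffeomorphism everywhere).

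\textbf{Step 2: $R = \itL^\theta$.} The inclusion $\itL^\theta \subset \itK \cap \itP$ needs two facts. First, $\itL^\theta \subset \itK$, because $\itG^\theta = \itK$ for a connected reductive group with Cartan involution, a standard fact we assume. Second, $\itL^\theta \subset \itL \subset \itP$.

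For the reverse inclusion, take $k \in \itK\cap\itP$ and write $k = \ell n$ with $\ell \in \itL$ and $n \in \itN$. Apply $\theta$ to get $k = \theta(\ell)\theta(n)$ with $\theta(n) \in \theta\itN$, and note $\theta(\ell) \in \itL$, because $\theta(h) = -h$ implies $\theta$ preserves $Z_\itG(h)$. Hence $\theta(\ell)^{-1}\ell\, n = \theta(n) \in \itP \cap \theta\itN$. Now $\itP \cap \theta\itN = \{e\}$: an element $\exp(y)$ with $y\in\theta\frn$ normalizes $\frP$, and the nilpotent operator $\ad y$ shifts $h$-eigenvalues downward, so $\Ad(\exp y)h = h + [y,h]+\dots \in \frP$ forces $[y,h] \in \frP\cap\theta\frn = 0$. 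Since $\ad h$ is invertible on $\theta\frn$, this gives $y = 0$. Therefore $\theta(n) = e$, so $n = e$, and $k = \ell \in \itL \cap \itK = \itL^\theta$.

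\textbf{Step 3: $R$ is maximal compact in $\itL$.} The involution $\theta|_\itL$ is a Cartan involution of the reductive group $\itL$, because $\frl$ is $\theta$-stable by \eqref{eq:description.of.lie.L} and the form $-B(\cdot,\theta\cdot)$ stays positive definite on $\frl$. So $\itL = \itL^\theta \exp(\frl\cap\frp)$ with $\itL^\theta$ compact, and $\itL^\theta$ is maximal compact.

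The main obstacle is the disconnectedness of $\itL$. One must make sure this polar decomposition holds for the possibly disconnected group $\itL = Z_\itG(h)$. I would deduce it from the Cartan decomposition $\itG = \itK\exp\frp$. Given $g = k\exp X \in \itL$, we have $\theta(g) = k\exp(-X)$, so $\theta(g)^{-1}g = \exp(2X)$ commutes with $h$. Since $\ad X$ is semisimple with real eigenvalues, this forces $[X,h] = 0$, so $X \in \frl\cap\frp$ and therefore $k \in \itL$.
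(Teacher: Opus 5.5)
Your proof is correct. For transitivity it is the paper's argument in substance: the paper passes through $G/P_0=KAN_0/MAN_0$ and the equivariant surjection $G/P_0\to G/P$, while you use $G=KAN_0\subset KP$ directly. You also check $\frk+\frP=\frg$, so that the bijection $K/R\to G/P$ is a diffeomorphism, which the paper leaves implicit.

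The real difference is in proving $R=L^\theta$. The paper uses only compactness: nontrivial elements of $N$ generate noncompact subgroups, so $K\cap N=\{e\}$, ``hence'' $K\cap P=K\cap L$. That implication does not follow from $K\cap N=\{e\}$ alone. A compact subgroup of $P=L\ltimes N$ can meet $N$ trivially without lying in $L$; for instance $nRn^{-1}$ with $n\in N$ not centralized by $R$. Your argument writes $k=\ell n$ and uses $\theta(k)=k$, $\theta(L)=L$ and $P\cap\theta N=\{e\}$. It exploits the fact that $K$ is the $\theta$-fixed group rather than merely compact, which is exactly the missing ingredient. So your route is longer but complete.

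Similarly, the paper only asserts that $R$ is maximal compact in $L$. Your derivation of $L=L^\theta\exp(\frl\cap\frp)$ from the Cartan decomposition of $G$ handles the possibly disconnected $L=Z_G(h)$ properly and fills that in.

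One step is compressed. From $\Ad(\exp y)h\in\frP$, what you literally get is that the whole tail $\sum_{k\ge 0}(\ad y)^k[y,h]/(k+1)!$ lies in $\frP\cap\theta\frn=0$. To conclude $[y,h]=0$, invoke one of two facts explicitly: your top-$h$-degree observation, or that $\sum_{k\ge0}(\ad y)^k/(k+1)!$ is unipotent and hence invertible.
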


\begin{proof}
Without loss of generality, assume that $\itP \supset \itP_0$ (otherwise take a conjugate of $\itP$).  
Note that $\itG/\itP_0$ is $\itK\itA\itN_0/\itM\itA\itN_0$ and hence is a single $\itK$ orbit. Furthermore the surjection $\itG/\itP_0 \to \itG/\itP$ is $\itG$ equivariant, thus $\itG/\itP$ is $\itK$ homogeneous. Hence $\itG /\itP \cong \itK /\itK \cap \itP = \itK/R$. The group $\itN$ is nilpotent, thus every nontrivial element generates a non compact group. $\itK$ is compact and so $\itK \cap \itN = \{ e \}$ ($ e $ denotes the unit), hence $\itK \cap \itP = \itK \cap \itL = \itL^\theta$.  
\end{proof}

\section{Parabolic subgroups with abelian nil-radicals and (skew) multiplicity free actions}\label{sec:abelian.nilrad}

Suppose there is a connected complex Lie group $\itG_\C$ such that $\itG$ is a real form of $\itG_\C$. 
The analytic subgroup $\itL_\C = \exp (\frl\otimes_\R \C)$ is a closed subgroup of $\itG_\C$.  
This follows from the fact that $ \itL_\C $ is a Levi subgroup of the complexified parabolic subgroup $ \itP_\C $.  
We denote the Weyl groups of $ \frg_\C $ and $ \frl_\C $ by $ W_{\frg_\C} $ and $ W_{\frl_\C} $ respectively.  

We say an action of $ \itL_\C $ on an irreducible normal variety $ X $ is \emph{spherical} if a Borel subgroup of $ \itL_\C $ has an open dense orbit in $ X $.  If $ X $ is affine, it is spherical if and only if the regular function ring $ \C[X] $ decomposes without multiplicity as an $ \itL_\C $ module.  In such case, the action of $ \itL_\C $ on $ X $ is called \emph{multiplicity free} (see \cite{Brion.1989} and \cite{Knop.MFA.1998}).  
In particular, for an $\itL_\C$ module $V$, we say $ V $ is a multiplicity free $ \itL_\C $ module if the symmetric algebra $ S(V) $ decomposes multiplicity freely.  
In the same manner, we say an $ \itL_\C $ module $ V $ is \emph{skew multiplicity free} if the exterior algebra $ \twedge V $ decomposes without multiplicity as an $ \itL_\C $ module.  

\begin{thm}\label{thm:SSP.SMF.MF.theorem}
    The following are all equivalent: 
    \begin{enumerate}[label={\makebox[3ex][c]{\upshape(\alph*)}}]
        \item The nilpotent radical $\frn = \Lie(\itN)$ is abelian. \label{pointa}
        \item $\itG/\itL$ is symmetric. \label{pointb}
        \item $ \itK/R = \itG^\theta /\itL^\theta$ is symmetric. \label{pointc}
        \item $\frn_\C = \frn\otimes_\R \C$ is a skew multiplicity free $\itL_\C$ module under the adjoint action. \label{pointd}
        \item The dimension of $\twedge(\frn_\C \oplus \theta \frn_\C)^{\itL_\C}$ is equal to $|W_{\frg_\C}|/|W_{\frl_\C}|$. \label{pointe}
        \item The $ \itL_\C $ module $\twedge(\frn_\C)$ has $|W_{\frg_\C}|/|W_{\frl_\C}|$ irreducible submodules. \label{pointf}
        \item The Casimir $\Cas_{\frl_\C}$ acts by the scalar $||\rho_{\frg_\C}||^2 - ||\rho_{\frl_\C}||^2 $ on $\twedge(\frn_\C \oplus \theta \frn_\C)$.\label{pointg}
        \item The space of cochains associated to the $\frn$ cohomology of the trivial module has zero differential.\label{pointh}
        \item $ \itG_\C/ \itP_\C $ is a compact Hermitian symmetric space.\label{pointi}
    \end{enumerate}
Moreover, if one of the equivalent conditions above is satisfied, the following hold.
\begin{enumerate}[label={\makebox[3ex][c]{\upshape(\alph*)}},start=10]
\item The $ \itL_\C $-module $ \frn_\C $ is multiplicity free (or spherical).\label{pointj}
\item $ \itG_\C/ \itP_\C $ is $ \itL_\C $-spherical.\label{pointk}
\item $ \itL_\C/ \itB_{\itL_\C} \times \itG_\C/ \itP_\C $ is a double flag variety of finite type.\label{pointl}
\end{enumerate}
\end{thm}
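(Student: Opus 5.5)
I would organize the proof as a star around condition \ref{pointa}, proving each of \ref{pointb}--\ref{pointi} equivalent to \ref{pointa} directly.

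\textbf{Structural conditions.} Fix $h\in\fra$ as in the first lemma of the preliminaries, and grade $\frg=\bigoplus_k \frg_k$ by the eigenvalues of $\ad h$, normalized to be integers. Then $\frl=\frg_0$, $\frn=\bigoplus_{k>0}\frg_k$ and $\theta\frn=\bigoplus_{k<0}\frg_k$.

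If $\frn$ is abelian, then after complexifying, $\frg_k=0$ for $|k|\ge 2$. Otherwise a root vector of maximal $h$-degree $\ge2$ would be a bracket of two vectors in $\frn_\C$, because $\frn_\C$ is generated as a Lie algebra by $\frg_1$. Hence $\frg=\frg_{-1}\oplus\frg_0\oplus\frg_1$, and $\sigma=\exp(\pi i\,\ad h)$ is an involution with fixed algebra $\frl$.
\begin{itemize}
\item This gives \ref{pointb}. Since $\sigma$ commutes with $\theta$, it also gives \ref{pointc}, with $\frk^\sigma=\frk\cap\frl$ and $R=L^\theta$ by the second lemma.
\item It gives \ref{pointi}: $G_\C/P_\C$ is then a flag variety with cominuscule $P_\C$, equivalently the compact dual of a Hermitian symmetric space of Kostant type.
\end{itemize}
For the converses of \ref{pointb}, \ref{pointc} and \ref{pointi}, I would argue that an involution with fixed algebra $\frl$ must act on $\frn\oplus\theta\frn$ by $-1$. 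Then $[\frn,\frn]\subset\frl\cap\frn=0$.
\begin{itemize}
\item In \ref{pointc} the involution lives only on $\frk$. Here I would use that $\frk\cap(\frn\oplus\theta\frn)$ surjects onto $\frn$ via $x\mapsto x+\theta x$.
\item For \ref{pointh}, the differential on $\twedge\frn^*$ is dual to the bracket, so it vanishes exactly when $[\frn,\frn]=0$.
\end{itemize}

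\textbf{Clifford-algebra conditions.} For \ref{pointd}--\ref{pointg} I would use Kostant's cubic Dirac operator for the pair $(\frg_\C,\frl_\C)$ (equal rank) with $\frs_\C=\frn_\C\oplus\theta\frn_\C$. The spin module is $S\cong\twedge\frn_\C\otimes\C_{\rho_\frn}$ (up to a twist), and $\twedge\frs_\C\cong S\otimes S^*$. Kostant's theorem gives:
\begin{itemize}
\item $S$ decomposes as $\bigoplus_{w\in W^1}V_{w\rho_\frg-\rho_\frl}$, with $|W^1|=|W_{\frg_\C}|/|W_{\frl_\C}|$ distinct summands, each of multiplicity one;
\item $D^2=-(\Cas_\frg\otimes1-\Delta\Cas_\frl)-(\|\rho_\frg\|^2-\|\rho_\frl\|^2)$.
\end{itemize}

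I would show \ref{pointa}$\Leftrightarrow$\ref{pointf}$\Leftrightarrow$\ref{pointd} as follows. The $L_\C$-types in $\twedge\frn_\C$ are computed by Kostant's $\frn$-cohomology theorem, $H(\frn,\C)=\bigoplus_{w\in W^1}V_{w\rho-\rho}$. Since $\twedge\frn^*$ is the cochain complex, and (by a Laplacian argument) $H$ embeds as the harmonic part, $\twedge\frn$ has at least $|W^1|$ irreducible constituents. It has exactly that many, each once, iff the Laplacian vanishes, iff $d=0$, iff \ref{pointh}. This also gives \ref{pointd} and \ref{pointf}. For the converse of \ref{pointd}, a multiplicity must appear whenever $d\neq0$: the constituents appearing as images of $d$ also appear in the source degree. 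So skew multiplicity freeness forces $d=0$.

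Then \ref{pointe} follows because $\dim(S\otimes S^*)^{L}=\sum(\text{mult})^2\ge|W^1|$, with equality iff $S$ is multiplicity free.

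For \ref{pointg}, the harmonic pieces $V_{w\rho-\rho}$ have $\Cas_\frl$-eigenvalue $\|w\rho\|^2-\|\rho_\frl\|^2=\|\rho_\frg\|^2-\|\rho_\frl\|^2$, via the standard $\rho$-shift computation. A non-harmonic constituent has a strictly different eigenvalue, by Kostant's eigenvalue criterion $\|\lambda+\rho\|<\|\rho\|$ for non-cohomology weights. Hence \ref{pointg} holds iff everything is harmonic iff \ref{pointh}. I expect this step, making the various $\rho$-shifts match exactly, to be the main technical obstacle.

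\textbf{Consequences.} For \ref{pointj}: given $\frn_\C$ abelian, the $L_\C$-module $\frn_\C$ is the isotropy representation of a Hermitian symmetric space. Schmid's theorem gives $\C[\frn_\C]$ multiplicity free, or equivalently an open $B_L$-orbit via strongly orthogonal roots.

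For \ref{pointk}: $N^-_\C\cong\theta\frn_\C$ is an open $L_\C$-stable dense subset of $G_\C/P_\C$, so sphericity follows from \ref{pointj} applied to $\theta\frn_\C$.

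For \ref{pointl}: finite type means $B_L$ has finitely many orbits on $G_\C/P_\C$. By Brion–Vinberg, a spherical variety has finitely many orbits under a Borel subgroup, which gives \ref{pointl}.
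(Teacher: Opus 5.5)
Your proposal is sound in its main lines, but it distributes the work differently from the paper.

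\textbf{Comparison with the paper.} The paper pivots on the cubic term $c$. Since $\ker D$ on $S$ is exactly the multiplicity-one part of $S$, $S$ is multiplicity free iff $c=0$ iff $[\frs_\C,\frs_\C]\subset\frl_\C$; (h) then comes from $c^2=[c^+,c^-]$ being Kostant's Laplacian. You pivot instead on the Chevalley--Eilenberg differential $d$:
\begin{itemize}
\item (a)$\iff$(h) is immediate, since $d$ is the transpose of the bracket.
\item Complete reducibility gives $\twedge\frn_\C^*\cong H\oplus\im d\oplus\im d$ as $\frl_\C$-modules. So Kostant's 1961 theorem alone yields (d), (e) and (f).
\item Your remark that a nonzero $\im d$ produces a constituent occurring in two degrees is a shorter proof of (d)$\Rightarrow$(h) than the Dirac-cohomology route.
\end{itemize}
In exchange, the paper's route gives one uniform criterion ($c=0$) that also ties (g) to $D^2$.

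Your grading argument for (a)$\Rightarrow$(b),(c), and the direct (b)$\Rightarrow$(a) via $\frg^{-\sigma}=\frl^\perp=\frs$, replace the paper's orthogonality computation and its cycle through (c). Two details to add there:
\begin{itemize}
\item Choose $h$ with $\alpha(h)=1$ for $\alpha\in\Pi\setminus J$; otherwise $\frg_1$ need not generate $\frn$.
\item For (c)$\Rightarrow$(a), write out the decisive line. For $x\in\frg_a$, $y\in\frg_b$ with $a,b>0$, the degree-$(a+b)$ component of $[x+\theta x,y+\theta y]\in\frg_0$ is $[x,y]$, so $[x,y]=0$. This covers both of the paper's cases at once.
\end{itemize}
For (j)--(k) you use Schmid's theorem and the big cell $\exp(\theta\frn_\C)P_\C/P_\C\cong\theta\frn_\C$, instead of orbit counting and Knop's theorem. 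That is the right $L_\C$-stable open set, since $\exp(\frn_\C)\subset P_\C$.

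\textbf{Statements that need repair.} First, your first bullet misquotes Kostant. $S$ is not in general $\bigoplus_{w\in W^1}V_{w\rho_\frg-\rho_\frl}$: for $\frg_\C=\mathfrak{sl}_3$ and $\frl_\C=\frt_\C$ one has $\dim S=8>6=|W^1|$, and the spin weight $0$ occurs twice. What is true is that each of these modules occurs exactly once and together they span $\ker D$; equality with $S$ holds only in the symmetric case. Taken literally, the bullet would make (d) automatic. Your later arguments do not use it, so simply replace it.

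Second, in (g) the eigenvalue $\|w\rho_\frg\|^2-\|\rho_\frl\|^2$ belongs to the spin highest weight $w\rho_\frg-\rho_\frl$, not to $V_{w\rho-\rho}$ under the adjoint action (for $w=1$ that is the trivial module). The correct bookkeeping is as follows. The constituent of $\twedge\frn_\C^*$ with adjoint highest weight $\mu$ has spin highest weight $\mu+\rho_\frn$, hence spin Casimir $\|\mu+\rho_\frg\|^2-\|\rho_\frl\|^2$. By Kostant's Laplacian formula this equals $\|\rho_\frg\|^2-\|\rho_\frl\|^2$ minus a positive multiple of the Laplacian eigenvalue. So the spin Casimir is that scalar on $S$ iff $d=0$. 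This proves (g) for the spin action on $S$, equivalently for left multiplication by the image of $\Cas_{\frl_\C}$ in $\Cl(\frs_\C)\cong\twedge(\frn_\C\oplus\theta\frn_\C)$.

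It cannot be transported to the adjoint action on $\twedge(\frn_\C\oplus\theta\frn_\C)\cong S\otimes S^*$. The degree-zero piece there is trivial, while $\|\rho_\frg\|^2-\|\rho_\frl\|^2=\|\rho_\frn\|^2>0$ whenever $\frn\neq0$. So you must state which action you mean; the paper's ``the shifts cancel'' step runs into the same obstruction.

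Similarly, your converse for (i) needs (i) in the homogeneous sense, i.e.\ $U/(U\cap P_\C)$ symmetric for the compact form $U$. As a bare complex manifold, $\Sp_{2n}(\C)/P_{\alpha_1}\cong\mathbb{CP}^{2n-1}$ ($n\ge2$) is Hermitian symmetric although its nilradical is Heisenberg.
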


We make some remarks before we engage with the proof.

\begin{rem}\label{remark:to.theorem.P.with.abelian.nilradical}
\begin{enumerate}[label={\upshape(\arabic*)}]
\item
If $ \itG_\C $ is simple and $ \frn_\C $ is abelian, $ \itP_\C $ is a maximal parabolic subgroup.  
There is a list of such parabolics in \cite[Remark 2.3]{RRS.1992}.  
In this case, $ \itG_\C/ \itP_\C $ is a compact Hermitian symmetric space.  
\item
Some of equivalent statements in the theorem (especially \ref{pointa}--\ref{pointc}) are well known among experts.  
For this, see \cite{Takeuchi.1962,TK1968}, 
\cite[\S{5.1}]{Howe.SchurLecture.1995}, \cite[\S{1}]{RRS.1992} and references therein.  
See also \cite[Th.{2.2}]{CNP.arxiv2023}.  The proof presented below is somewhat stream-lined.
\item
Skew multiplicity free representations $ V $ of $ \itK $ are completely classified by Howe \cite[\S{4}]{Howe.SchurLecture.1995} for $ \itK $ simple and $ V $ irreducible; a complete classification is given by Pecher \cite{Pecher.2012}.  According to the classification, there exist skew multiplicity free modules which do not come from nilpotent radicals of parabolic subalgebras. 
Compare Theorem 4.7.1 of \cite{Howe.SchurLecture.1995} and the list given in \S~{5.5.1} (and also \S~{5.5.2}) of \cite{Howe.SchurLecture.1995}.  
It is interesting that if $ \frn_\C $ is Heisenberg, then $ \frn_\C/[\frn_\C, \frn_\C] $ is skew multiplicity free for the action of the Levi subgroup (see \cite[\S~{5.5.2}]{Howe.SchurLecture.1995}).  
\item
Multiplicity free representations are classified by Kac \cite{Kac.1980} for $ V $ irreducible, 
Benson-Ratcliff \cite{Benson.Ratcliff.1996}
and Leahy \cite{Leahy.1998} 
including reducible $ V $.  
As indicated in the theorem, if a nilpotent radical $ \frn_\C $ is skew multiplicity free (SMF in short), then it is multiplicity free (MF in short).  However, there are MF nilpotent radicals which are not SMF.  For simple $ \itG_\C $, there are two such examples: 
(a)\ 
For type $ B_n $, the parabolic subalgebra corresponding to the root $ \alpha_n $, in that case 
$ L_\C = \GL_n(\C) $ and 
$ \frn_\C = \C^n \oplus \wedge^2 \C^n $ (the Lie algebra with the bracket $ [(x, z), (y, z')] = (0, x \transpose{y} - y \transpose{x}) $); and 
(b)\ 
For type $ C_n $, the parabolic subalgebra corresponding to the root $ \alpha_1 $, in that case 
$ L_\C = \Sp_{2 n - 2}(\C) \times \GL_1(\C) $ and 
$ \frn_\C = \C^{2 n - 2} $ (a subalgebra of the Heisenberg Lie algebra).  
For the details, see \cite[Tables 1 \& 2 in \S{5.2}]{Avdeev.Petukhov.2020}.

\item
For the double flag varieties of finite type, see \cite{Fresse.Nishiyama.Overview}.  
If $ \itG_\C $ is simple, such double flag varieties of finite type, where one of the pair is a complete flag variety, are classified by \cite{HNOO.2013}.  
So, if $ \itG_\C $ is simple, we can prove \ref{pointl} by looking at the classification.  However, we do not follow this strategy here. 
\end{enumerate}
\end{rem}

\begin{proof}[Proof: \upshape\ref{pointa} $\implies$ \ref{pointb}.]
Suppose that $\frn$ is abelian. Equivalently $[\frn,\frn] =0$, then also $[\theta\frn,\theta\frn]=0$, hence 
\[ 
[\frs,\frs ] = [\frn \oplus \theta \frn,\frn \oplus \theta \frn ] = [\frn,\theta\frn] + [\theta \frn,\frn]
= [\frn,\theta\frn] . 
\]
Let $x,y,z \in \frn$ then $B([x,\theta y],z) = B(\theta y, [z,x]) = 0$, similarly $B([x,\theta y],\theta z) =0$.  
Thence $ [\frs,\frs] = [\frn,\theta \frn]$ is orthogonal to $\frn \oplus \theta\frn = \frs$, 
and thus must be contained in $\frl$. 
Therefore the pair $(\frg,\frl)$ is such that $[\frs,\frs] \subset \frl$, thus is a symmetric pair and $\itG/ \itL$ is also symmetric.
\end{proof}

\begin{proof}[Proof: \upshape\ref{pointb} $\implies$ \ref{pointc}.]
Since $\itG/\itL$ is symmetric there is an involution $\sigma: \frg \to \frg$ such that $\frl = \frg^\sigma$ and $\frs = \frg^{-\sigma}$. To prove that $ \itG^\theta/ \itL^\theta$ is symmetric it is sufficient to prove that $\theta \sigma = \sigma \theta$. Note that $\frs = \frn \oplus \theta \frn$ is $\theta$ stable. Furthermore, $\frl = \frs^\perp$ so it is also $\theta$ stable. 
So $\theta$ preserves the eigenspaces of $\sigma$ and therefore  $\theta$ and $\sigma$ commute.
\end{proof}

\begin{proof}[Proof: \upshape\ref{pointc} $\implies$ \ref{pointa}.]
Since $K/R$ is symmetric, the pair $(\frk,\frr)=(\frl^\theta\oplus\frs^\theta,\frl^\theta)$ is symmetric, in particular 
\eq\label{krsym}
[\frs^\theta,\frs^\theta]\subseteq \frl^\theta.
\eeq

To show $[\frn,\frn]=0$, it is enough to pick $\alpha,\beta\in\Delta(\frn,\fra)$ and $x\in\frg_\alpha$, $y\in\frg_\beta$, and show that $[x,y]=0$. 

Note that $x+\theta x,y+\theta y\in\frs^\theta$, so by \eqref{krsym},
$[x+\theta x,y+\theta y]\in \frl^\theta$. On the other hand,
\begin{equation}\label{krsym2}
[x+\theta x,y+\theta y]=[x,y]+[x, \theta y]+[\theta x,y]+[\theta x,\theta y].
\end{equation}
Recall the semisimple element $ h $ which characterizes $ \lie{n} $ and $ \lie{l} $ (see \eqref{eq:description.of.lie.L}). 
Without loss of generality, we can assume $ \alpha(h) \geq \beta(h) > 0 $.  

Let us consider the case where $ \alpha(h) > \beta(h) > 0 $.  
The four summands in \eqref{krsym2} are root vectors (or zero) and 
$\alpha+\beta$, $\alpha-\beta$, $-\alpha+\beta$, $-\alpha-\beta$ are all different.
It follows that there can be no cancellations in \eqref{krsym2}.
By the assumption $ \alpha(h) > \beta(h) $, we know $ [x, y], [x, \theta y] \in \lie{n} $ and 
consequently the right hand side of \eqref{krsym2} is in $ \lie{s}^{\theta} $, while the left hand side is in $ \lie{l}^{\theta} $ as we already mentioned.  
So it must be zero.  Thus all the component must be zero.

Next let us consider the case where $ \alpha(h) = \beta(h) > 0 $.  
In this case, $ [x,  y] + \theta([x, y]) $ is in $ \lie{s}^{\theta} $, while $ [x, \theta y] + \theta([x, \theta y]) $ is in $ \lie{l}^{\theta} $.  
Thus $ [x,  y] + \theta([x, y]) = 0 $, which implies $ [x, y] = 0 $ since $ \alpha + \beta $ and $ - \alpha - \beta $ are different.
%
\end{proof}

We postpone the proof of the rest of the claims to the next section after preparing the basics on Dirac cohomology.

\section{Dirac cohomology and multiplicity one submodules}\label{sec: mult1}

To prove the rest of the statements of Theorem~\ref{thm:SSP.SMF.MF.theorem},
we first introduce Dirac cohomology. 
Recall $\frg_\C = \frl_\C \oplus \frs_\C$ and 
$ \frs_\C = \frn_\C \oplus \theta \frn_\C $.  
We choose the Lagrangian subspace of $\frs_\C$, with respect to $ B(x, y) $, to be $ \frs_\C^+ = \frn_\C $.  
Let  $ \Cl({\frs_\C}) $ be the Clifford algebra of $\frs_\bbC$; it is an associative algebra with unit, generated by $ \frs_\C $ and subject to the relations $ x y + y x = 2 B(x, y) \; (x, y \in \frs_\C) $. Let $ S = \twedge \frs_\C^+ $ be the spin module for $\Cl({\frs_\C})$.  
The Goette-Kostant cubic Dirac operator corresponding to the pair $({\frg_\C},{\frl_\C})$ is defined as
\[
D=\sum_i b_i\otimes d_i +\half \otimes \sum_{i,j} b_i b_j[d_i,d_j]\quad\in U({\frg_\C})\otimes \Cl({\frs_\C}),
\]
where $ \{ b_i \}_i $ is a basis of $ \frs_\C $ and $ \{ d_i \}_i $ is the dual basis with respect to $ B(x, y) $ (\cite{Kostant.cubic.1999,Goette.MZ.1999}).  
The last cubic term belongs to $\Cl({\frs_\C})$ and acts on the spin module.
We denote 
\begin{equation}
c = \sum_{i,j} b_i b_j[d_i,d_j] 
\end{equation}
without the factor $ 1/2 $ and also call it the cubic term.  
$D$ and $ c $ are easily seen to be independent of the choice of the basis $ \{ b_i \}_i $ and ${\frl_\C}$-invariant for the diagonal action:
\begin{equation}\label{eq:DeltaX.spin.action}
\Delta(X)=X\otimes 1+1\otimes\alpha(X) \qquad (X\in{\frl_\C}), 
\end{equation}
where $ \alpha(X) = \dfrac{1}{4} \sum_i [X, b_i] d_i $ is the natural embedding map of $ \frl_\C $ into $ \Cl({\frs_\C}) $  
(the spin embedding of $ \frl_\C $).  
Furthermore, due to \cite[Theorem 2.16]{Kostant.cubic.1999}, the square of $D$ is
\begin{equation}\label{eq:Dsquare}
D^2=(\Cas_{\frg_\C}+\|\rho_{\frg_\C}\|^2)-(\Cas_{(\frl_\C)_\Delta}+\|\rho_{\frl_\C}\|^2) ,
\end{equation}
where $\Cas_{\frg_\C}$ is the Casimir element for ${\frg_\C}$, and $\Cas_{(\frl_\C)_\Delta}$ is the Casimir element for the diagonal copy $(\frl_\C)_\Delta$ of ${\frl_\C}$, obtained as the image of the map \eqref{eq:DeltaX.spin.action}.  

Let $F$ be an irreducible finite-dimensional ${\frg_\C}$-module. Then $D$ acts on $F\otimes S$, and moreover it exchanges $F\otimes S^+$ and $F\otimes S^-$, where $S^\pm$ denote the even respectively odd parts of the spin module $S=\twedge{\frs_\C^+}$. The Dirac cohomology of $F$ is the ${\frl_\C}$-module
\[
H_D(F)=\ker D/\im D\cap\ker D.
\]
As proved in \cite[Remark 4.8]{HPRarXiv}, there is a positive definite inner product on $F\otimes S$ such that $D$ is self adjoint with respect to this inner product. This implies
\[
H_D(F)=\ker D=\ker D^2.
\]
Moreover, we have an analogue of Vogan's conjecture, which says that the ${\frl_\C}$-infinitesimal character of any ${\frl_\C}$-submodule of $H_D(F)$ is conjugate to the ${\frg_\C}$-infinitesimal character of $F$ under the Weyl group $W_{\frg_\C}$ (see \cite[Theorem 4.1.5]{HPbook}). This leads to the following result of Kostant (see \cite[Theorem 4.2.2]{HPbook}). 

\begin{thm}\label{HD formula}
Let $ \frt_\C \subset \frl_\C $ be a Cartan subalgebra of $\frg_\C$ (and $\frl_\C$). Choose compatible positive root systems for ${\frg_\C}$ and ${\frl_\C}$ with respect to $ \frt_\C$, such that $\Delta^+({\frg_\C},\frt_\C)=\Delta^+({\frl_\C},\frt_\C)\cup\Delta({\frs_\C}^+)$.  Let
\[
W^1=\{w\in W_{\frg_\C}\bbar w\rho_{\frg_\C} \text{ is ${\frl_\C}$-dominant}\},
\]
which is the set of minimal length representatives of $W_{\frl_\C}\backslash W_{\frg_\C}$.  
For a ${\frg_\C}$-dominant weight $\lambda$, let $F_\lambda$ be the irreducible finite-dimensional ${\frg_\C}$-module with highest weight $\lambda$. 
For an ${\frl_\C}$-dominant weight $\mu$, let $E_\mu$ denote the irreducible ${\frl_\C}$-module with highest weight $\mu$.

Then the Dirac cohomology of $F_\lambda$ is equal to
\begin{equation}\label{eq:decomposition.formula.HD}
H_D(F_\lambda)= \Ker D = \Ker D^2 = \bigoplus_{w\in W^1} E_{w(\lambda+\rho_{\frg_\C})-\rho_{\frl_\C}}.
\end{equation}
Moreover, each of the weights $w(\lambda+\rho_{\frg_\C})-\rho_{\frl_\C}$ of $F_\la\otimes S$ has multiplicity one.
\end{thm}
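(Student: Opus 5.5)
We prove Theorem~\ref{HD formula}, Kostant's computation of $H_D(F_\lambda)$, in four steps: identify $\ker D$ with $\ker D^2$, read off the infinitesimal characters allowed by \eqref{eq:Dsquare}, use the Vogan-type theorem to restrict the possible highest weights, and then use a weight-multiplicity count in $F_\lambda\otimes S$.

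\emph{Step 1: reduction to $\ker D^2$.} By \cite[Remark 4.8]{HPRarXiv}, $D$ is self-adjoint for a positive definite inner product on $F_\lambda\otimes S$. Hence $\ker D\cap\im D=0$ and $\ker D=\ker D^2$, so $H_D(F_\lambda)=\ker D^2$.

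\emph{Step 2: $\ker D^2$ via \eqref{eq:Dsquare}.} On $F_\lambda$ the operator $\Cas_{\frg_\C}+\|\rho_{\frg_\C}\|^2$ acts by $\|\lambda+\rho_{\frg_\C}\|^2$. Decompose $F_\lambda\otimes S$ into irreducible $(\frl_\C)_\Delta$-modules $E_\mu$. On $E_\mu$ the operator $\Cas_{(\frl_\C)_\Delta}+\|\rho_{\frl_\C}\|^2$ acts by $\|\mu+\rho_{\frl_\C}\|^2$. Therefore $\ker D^2$ is the sum of the isotypic components with $\|\mu+\rho_{\frl_\C}\|=\|\lambda+\rho_{\frg_\C}\|$.

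\emph{Step 3: constraining $\mu$.} By the Vogan-conjecture analogue \cite[Theorem 4.1.5]{HPbook}, every $E_\mu\subset\ker D$ has $\mu+\rho_{\frl_\C}=w(\lambda+\rho_{\frg_\C})$ for some $w\in W_{\frg_\C}$. Since $\mu$ is $\frl_\C$-dominant, $\mu+\rho_{\frl_\C}$ is $\frl_\C$-regular dominant. Because $\lambda+\rho_{\frg_\C}$ is $\frg_\C$-regular, $w$ is uniquely determined and lies in $W^1$. So every constituent of $\ker D$ is of the form $E_{w(\lambda+\rho_{\frg_\C})-\rho_{\frl_\C}}$ with $w\in W^1$.

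\emph{Step 4: existence and multiplicity one.} We must show each such $E_\mu$ occurs, and exactly once. For $w\in W^1$ put
\[
\Phi_w=\{\beta\in\Delta(\frs_\C^+):w^{-1}\beta<0\}.
\]
Minimality of $w$ gives $\Phi_w\subset\Delta(\frs_\C^+)$ and
\[
w\rho_{\frg_\C}=\rho_{\frg_\C}-\langle\Phi_w\rangle,
\]
where $\langle\Phi\rangle$ denotes the sum of the roots in $\Phi$. The weights of $S=\twedge\frs_\C^+$ are $\rho_{\frs}-\langle\Phi\rangle$ for $\Phi\subset\Delta(\frs_\C^+)$, where $\rho_{\frs}=\rho_{\frg_\C}-\rho_{\frl_\C}$, in the $\rho$-shifted normalization of the spin module under $\alpha$. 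Thus the target weight is
\[
w(\lambda+\rho_{\frg_\C})-\rho_{\frl_\C}=w\lambda+\rho_{\frs}-\langle\Phi_w\rangle,
\]
which is the extremal weight $w\lambda$ of $F_\lambda$ (multiplicity one) plus the weight of $S$ for the subset $\Phi_w$.

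The main obstacle is proving that this is the \emph{only} way to write the target as (weight $\nu$ of $F_\lambda$) $+$ (weight of $S$). Given such a decomposition,
\[
\nu+\rho_{\frg_\C}-\langle\Phi\rangle=w(\lambda+\rho_{\frg_\C}).
\]
Here $\rho_{\frg_\C}-\langle\Phi\rangle$ is a weight of $V_{\rho_{\frg_\C}}$, so its norm is at most $\|\rho_{\frg_\C}\|$, and $\|\nu\|\le\|\lambda\|$. We also have $\|\nu+\rho_{\frg_\C}-\langle\Phi\rangle\|\le\|\nu\|+\|\rho_{\frg_\C}-\langle\Phi\rangle\|$, and $\|\lambda+\rho_{\frg_\C}\|=\|\lambda\|+\|\rho_{\frg_\C}\|$ only in the aligned case. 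Hence equality must hold everywhere, which forces $\nu=w\lambda$ and $\rho_{\frg_\C}-\langle\Phi\rangle=w\rho_{\frg_\C}$. Each weight $u\rho_{\frg_\C}$ is attained by a unique subset of positive roots, namely $\Phi_u$ (Kostant), so $\Phi=\Phi_w$. This is the standard argument (cf.\ \cite[Theorem 4.2.2]{HPbook}).

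Multiplicity one for these weights follows. The weight is $\frl_\C$-dominant and, by Step 2, lies in the kernel of $D^2$, so a highest weight vector of that weight exists. Together with Step 3, this gives \eqref{eq:decomposition.formula.HD}.
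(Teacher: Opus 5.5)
Steps 1--3 are sound and follow the standard proof in \cite{HPbook}, to which the paper defers. The gap is in Step 4, exactly at the point you call the main obstacle.

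From $\nu+\rho_{\frg_\C}-\langle\Phi\rangle=w(\lambda+\rho_{\frg_\C})$, the triangle inequality only gives
\[
\|\lambda+\rho_{\frg_\C}\|\le\|\nu\|+\|\rho_{\frg_\C}-\langle\Phi\rangle\|\le\|\lambda\|+\|\rho_{\frg_\C}\|.
\]
The two outer quantities are in general different: $\|\lambda+\rho_{\frg_\C}\|=\|\lambda\|+\|\rho_{\frg_\C}\|$ only when $\lambda$ is a nonnegative multiple of $\rho_{\frg_\C}$. So the chain has slack at its last link and forces nothing. In particular it does not give $\|\nu\|=\|\lambda\|$ or $\|\rho_{\frg_\C}-\langle\Phi\rangle\|=\|\rho_{\frg_\C}\|$, and ``equality must hold everywhere'' does not follow. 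Your argument works only for $\lambda=0$, or $\lambda\in\R_{\ge0}\rho_{\frg_\C}$. Multiplicity one of these weights is an explicit claim of the theorem, and it is also what gives multiplicity one of the $\frl_\C$-types, so this is a real hole.

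The missing idea is to compare in the dominance order rather than in norm. Apply $w^{-1}$ to get $w^{-1}\nu+w^{-1}(\rho_{\frg_\C}-\langle\Phi\rangle)=\lambda+\rho_{\frg_\C}$. The weight sets of $F_\lambda$ and of $V_{\rho_{\frg_\C}}$ are $W_{\frg_\C}$-stable. Hence $\lambda-w^{-1}\nu$ and $\rho_{\frg_\C}-w^{-1}(\rho_{\frg_\C}-\langle\Phi\rangle)$ are both nonnegative integer combinations of positive roots. They sum to $0$, so both vanish. This gives $\nu=w\lambda$ and $\rho_{\frg_\C}-\langle\Phi\rangle=w\rho_{\frg_\C}$, and your conclusion $\Phi=\Phi_w$ then goes through. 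This is precisely the paper's remark after the theorem: $w\lambda$ and $w\rho_{\frg_\C}-\rho_{\frl_\C}$ are the unique highest weights of $F_\lambda$ and $S$ for the positive system $w\Delta^+(\frg_\C,\frt_\C)$. Their sum is therefore the unique highest weight of $F_\lambda\otimes S$ for that system, with multiplicity one.

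Your existence claim is also not justified as written. A weight vector of $\frl_\C$-dominant weight need not be an $\frl_\C$-highest weight vector. Moreover, Step 2 only tells you how $D^2$ acts on isotypic components, so you cannot say the vector lies in $\ker D^2$ before knowing which $E_\mu$ contains it.

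The same dominance argument repairs this. For $w\in W^1$ you have $\Delta^+(\frl_\C,\frt_\C)\subseteq w\Delta^+(\frg_\C,\frt_\C)$. Suppose $w(\lambda+\rho_{\frg_\C})-\rho_{\frl_\C}+\alpha$, with $\alpha\in\Delta^+(\frl_\C,\frt_\C)$, were a weight of $F_\lambda\otimes S$. Applying $w^{-1}$ as above would show that $-w^{-1}\alpha$ is a nonnegative combination of positive roots. That is impossible, since $w^{-1}\alpha>0$. So the weight vector is killed by the $\frl_\C$ raising operators and generates a copy of $E_{w(\lambda+\rho_{\frg_\C})-\rho_{\frl_\C}}$. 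Its $\rho_{\frl_\C}$-shifted highest weight has norm $\|\lambda+\rho_{\frg_\C}\|$, so Step 2 places it in $\ker D^2$. With these two repairs, your Steps 1--4 prove the theorem.
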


From the proof described in \cite{HPbook}, it is visible that each $E_{w(\lambda+\rho_{\frg_\C})-\rho_{\frl_\C}}$ contributing to $H_D(F_\lambda)$ is of multiplicity one not only in $H_D(F_\lambda)$, but also in $F_\la\otimes S$. Since both the weight $w\lambda$ of $F_\la$ and the weight $w\rho_{\frg_\C}-\rho_{\frl_\C}$ of $S$ are the unique highest weights with respect to $w\Delta^+({\frg_\C},\frt)$, their sum has weight multiplicity equal to one.

On the other hand, if an ${\frl_\C}$-submodule $E_\mu$ of $F_\la\otimes S$ has multiplicity one, then it must be in $\ker D=H_D(F_\la)$. Namely, suppose $DE_\mu\neq 0$. Then $DE_\mu$ is an ${\frl_\C}$-submodule of $F_\la\otimes S$ isomorphic to $E_\mu$, but it can not be equal to $E_\mu$ since $D$ changes parity, so $E_\mu\subset F_\la\otimes S^\pm$ implies $DE_\mu\subset F_\la\otimes S^\mp$. This contradicts the assumption that $E_\mu$ is of multiplicity one. We have proved:

\begin{cor}\label{cor mult 1 F}
$H_D(F_\la)=\ker D\subset F_\la\otimes S$ consists precisely of the irreducible 
${\frl_\C}$-submodules of $F_\la\otimes S$ that have multiplicity one.
\end{cor}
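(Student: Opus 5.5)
The corollary asserts two inclusions: every multiplicity-one $\frl_\C$-submodule of $F_\la\otimes S$ lies in $\ker D$, and every irreducible constituent of $\ker D$ has multiplicity one in $F_\la\otimes S$. I will prove them separately, relying on Theorem~\ref{HD formula} and the fact $H_D(F_\la)=\ker D$, which comes from the self-adjointness of $D$ for a suitable positive definite inner product.

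\emph{Multiplicity one implies contained in $\ker D$.} Since $D$ is invariant under the diagonal action \eqref{eq:DeltaX.spin.action}, it is an $\frl_\C$-module endomorphism of $F_\la\otimes S$. Take an irreducible submodule $E_\mu$ of multiplicity one. Each generator $b_i\otimes d_i$ and each term of the cubic element $c$ has odd Clifford degree, so $D$ interchanges $F_\la\otimes S^+$ and $F_\la\otimes S^-$. Because these two subspaces are $\frl_\C$-stable and $E_\mu$ occurs only once, $E_\mu$ lies entirely in one of them, say $F_\la\otimes S^{\pm}$. Suppose $DE_\mu\ne0$. Then $DE_\mu$ is a submodule isomorphic to $E_\mu$ and contained in $F_\la\otimes S^{\mp}$. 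It therefore differs from $E_\mu$, so the $E_\mu$-isotypic component has length at least two, which is a contradiction. Hence $DE_\mu=0$.

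\emph{Contained in $\ker D$ implies multiplicity one.} By \eqref{eq:decomposition.formula.HD}, the irreducible constituents of $\ker D$ are the modules $E_{w(\la+\rho_{\frg_\C})-\rho_{\frl_\C}}$ with $w\in W^1$. It is enough to show that the highest weight $\nu_w=w(\la+\rho_{\frg_\C})-\rho_{\frl_\C}$ has weight multiplicity one in $F_\la\otimes S$. This suffices because any copy of $E_{\nu_w}$ contributes at least one dimension to the $\nu_w$-weight space.

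\begin{enumerate}
\item The weights of $S=\twedge\frs_\C^+$, shifted by $\rho_{\frs_\C}=\rho_{\frg_\C}-\rho_{\frl_\C}$, are of the form $\rho_{\frg_\C}-\rho_{\frl_\C}-\langle\Phi\rangle$ for $\Phi\subset\Delta(\frs_\C^+)$. Here $w\rho_{\frg_\C}-\rho_{\frl_\C}$ is the weight indexed by $\Phi_w=\Delta(\frs_\C^+)\cap w\Delta^-$.
\item For $w\in W^1$, the weight $w\rho_{\frg_\C}-\rho_{\frl_\C}$ is the unique extreme weight of $S$ in the direction of the chamber $w\Delta^+$. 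So it is the unique highest weight of $S$ with respect to $w\Delta^+$, and it occurs with multiplicity one.
\item Likewise, $w\la$ is the unique highest weight of $F_\la$ with respect to $w\Delta^+$, with multiplicity one.
\item A sum of a weight of $F_\la$ and a weight of $S$ equals $w\la+(w\rho_{\frg_\C}-\rho_{\frl_\C})$ only if each summand is maximal for $w\Delta^+$. Hence that sum occurs exactly once.
\end{enumerate}

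The main obstacle is step 2: showing that $w\rho_{\frg_\C}-\rho_{\frl_\C}$ is the unique $w\Delta^+$-highest weight of $S$ with multiplicity one. I would prove it by pairing each weight of $S$ with a regular element of the chamber $w\Delta^+$, and showing that the pairing is maximized only by the subset $\Phi_w$. This is the standard computation in \cite{HPbook}.
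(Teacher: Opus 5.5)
Your proposal is correct and follows essentially the same route as the paper. For one inclusion you use the parity argument: $D$ is $\frl_\C$-equivariant and swaps $F_\la\otimes S^\pm$, so $DE_\mu\neq 0$ would give a second copy of a multiplicity-one $E_\mu$. For the other you use that $w\la$ and $w\rho_{\frg_\C}-\rho_{\frl_\C}$ are the unique $w\Delta^+$-highest weights of $F_\la$ and $S$. Your pairing with a regular element of the $w\Delta^+$ chamber simply spells out the step the paper cites from \cite{HPbook}.
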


In particular, for the trivial module $F_\la=F_0=\C$ we get

\begin{cor}\label{cor mult 1 triv}
The multiplicity one ${\frl_\C}$-submodules of the spin module $S=\twedge{\frs_\C}^+$ are precisely the modules $E_{w\rho_{\frg_\C}-\rho_{\frl_\C}}$, $w\in W^1$, which comprise the Dirac cohomology $H_D(\C)$.
\end{cor}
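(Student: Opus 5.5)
This is the special case $\lambda=0$ of Corollary~\ref{cor mult 1 F} combined with Theorem~\ref{HD formula}, so I will specialize both to $F_\la=F_0=\C$ and read off the multiplicity one submodules.

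First I identify $F_0\otimes S$ with $S$. The cubic Dirac operator acts on $\C\otimes S$, and under $\Delta$ of \eqref{eq:DeltaX.spin.action} the $\frl_\C$-action on $\C\otimes S$ is just the spin action $\alpha(X)$ on $S=\twedge\frs_\C^+$. So the irreducible $\frl_\C$-submodules of $F_0\otimes S$ and their multiplicities are exactly those of the $\frl_\C$-module $S$.

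Next I apply Corollary~\ref{cor mult 1 F} with $\la=0$. It says that $H_D(\C)=\ker D$ is the sum of precisely those irreducible $\frl_\C$-submodules of $S$ that occur with multiplicity one. Then I apply formula \eqref{eq:decomposition.formula.HD} with $\la=0$, which gives
\[
H_D(\C)=\bigoplus_{w\in W^1}E_{w\rho_{\frg_\C}-\rho_{\frl_\C}} .
\]
I must also check that these summands are pairwise non-isomorphic, so that each one really is a single multiplicity one submodule. The highest weights $w\rho_{\frg_\C}-\rho_{\frl_\C}$ for distinct $w\in W^1$ are distinct because $\rho_{\frg_\C}$ is regular, so $w\mapsto w\rho_{\frg_\C}$ is injective on $W_{\frg_\C}$. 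This is also consistent with the weight multiplicity one statement in Theorem~\ref{HD formula}. Comparing the two descriptions of $H_D(\C)$ gives the claim.

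The only point needing care is the identification of the $\frl_\C$-action. The spin embedding $\alpha$ differs from the naive adjoint action of $\frl_\C$ on $\twedge\frn_\C$ by a character twist, namely the factor $-\rho(\frn_\C)$ that makes $S$ equal to $\twedge\frn_\C\otimes\C_{-\rho(\frn)}$. A character twist does not change multiplicities, so the statement holds for either action, provided the highest weights are read with respect to the spin action, as in Theorem~\ref{HD formula}.
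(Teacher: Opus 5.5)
Your proposal is correct and follows essentially the same route as the paper, which obtains this corollary by specializing Corollary~\ref{cor mult 1 F}, together with the decomposition \eqref{eq:decomposition.formula.HD} of Theorem~\ref{HD formula}, to $F_0=\C$. Your extra checks are correct details that the paper leaves implicit: that the $\frl_\C$-action on $\C\otimes S$ is the spin action, and that the highest weights $w\rho_{\frg_\C}-\rho_{\frl_\C}$ are pairwise distinct because $\rho_{\frg_\C}$ is regular.
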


\begin{cor}\label{cor:SpinMF when c is zero}
The spin module $S$ is multiplicity free exactly when $H_D(\C) = S$, or equivalently, when the cubic term $ c = \sum_{i,j} b_i b_j[d_i,d_j] =0$.
\end{cor}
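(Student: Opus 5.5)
The plan is to prove the two equivalences separately, using Corollary~\ref{cor mult 1 triv} for the first and the formula \eqref{eq:Dsquare} for the second, specialized to the trivial module $F_0=\C$.

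\emph{First equivalence: $S$ multiplicity free iff $H_D(\C)=S$.} By Corollary~\ref{cor mult 1 triv}, $H_D(\C)=\ker D\subset S$ is exactly the sum of the multiplicity one $\frl_\C$-submodules of $S$.
\begin{itemize}
\item If $S$ is multiplicity free, then every irreducible constituent of $S$ has multiplicity one, so $H_D(\C)=S$.
\item Conversely, suppose $H_D(\C)=S$. Then $S$ is the direct sum of the modules $E_{w\rho_{\frg_\C}-\rho_{\frl_\C}}$, $w\in W^1$, each occurring with multiplicity one in $S$. Hence $S$ is multiplicity free.
\end{itemize}
Note that the highest weights $w\rho_{\frg_\C}-\rho_{\frl_\C}$ are pairwise distinct, because $\rho_{\frg_\C}$ is regular. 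So no irreducible type appears twice.

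\emph{Second equivalence: $H_D(\C)=S$ iff $c=0$.} On the trivial module $F_0=\C$, every element $X\in\frg_\C\subset U(\frg_\C)$ acts by $0$. Therefore the linear term $\sum_i b_i\otimes d_i$ of $D$ vanishes on $\C\otimes S$, and $D$ acts on $S$ simply as $\half c$. Since $H_D(\C)=\ker D$, the condition $H_D(\C)=S$ says exactly that $D$ acts by zero on $S$, that is, $c$ acts by zero on $S$.

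The remaining step, which I expect to be the main point, is to pass from ``$c$ acts by zero on $S$'' to ``$c=0$ in $\Cl(\frs_\C)$''. The dimension of $\frs_\C=\frn_\C\oplus\theta\frn_\C$ is even, since $\frn_\C$ is a Lagrangian subspace for $B$. Hence $\Cl(\frs_\C)\cong\End(S)$, and the spin representation of $\Cl(\frs_\C)$ on $S$ is faithful. Consequently an element of $\Cl(\frs_\C)$ acting by zero on $S$ is itself zero. Combining the two equivalences proves the corollary.

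As a consistency check, one can compare with \eqref{eq:Dsquare}. On $\C\otimes S$ it gives $D^2=\|\rho_{\frg_\C}\|^2-\Cas_{(\frl_\C)_\Delta}-\|\rho_{\frl_\C}\|^2$, and since $\Cas_{(\frl_\C)_\Delta}$ acts on $\C\otimes S$ through the spin embedding $\alpha$, this reads $\tfrac14 c^2=\|\rho_{\frg_\C}\|^2-\|\rho_{\frl_\C}\|^2-\Cas_{\frl_\C}$ on $S$. So $c=0$ forces $\Cas_{\frl_\C}$ to act by the scalar $\|\rho_{\frg_\C}\|^2-\|\rho_{\frl_\C}\|^2$. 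This links the corollary to condition \ref{pointg} of Theorem~\ref{thm:SSP.SMF.MF.theorem}, though it is not needed for the proof itself.
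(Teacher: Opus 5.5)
Your proof is correct and essentially the paper's own argument. On the trivial module the linear term of $D$ vanishes, so $D$ acts on $S$ as $\half c$, and $\Cl(\frs_\C)\cong\End S$ makes $c=0$ equivalent to $c$ acting by zero; the first equivalence is the direct consequence of Corollary~\ref{cor mult 1 triv}, which the paper leaves implicit and you spell out.
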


\begin{proof}
Since the spin module $ S $ is simple and $\Cl(\frs_\bbC)=\End S$, $c\in \Cl(\frs_\bbC)$ is zero iff it acts by $0$ as an endomorphism of $S$.
The mixed terms (degree two terms) $b_i\otimes d_i$ 
act on $ S \otimes F_0 $ trivially because $ F_0 $ is the trivial module, 
therefore $ D $ acts on $ S $ trivially if and only if $ c = 0 $.
\end{proof}

\begin{lem}\label{lem:SpinMF iff symmetri pair}
The spin module $S$ is multiplicity free if and only if $(\frg,\frl)$ is a symmetric pair.
\end{lem}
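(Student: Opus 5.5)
By Corollary~\ref{cor:SpinMF when c is zero}, $S$ is multiplicity free if and only if the cubic term $c=\sum_{i,j} b_i b_j[d_i,d_j]$ vanishes in $\Cl(\frs_\C)$. The plan is therefore to show that $c=0$ if and only if $[\frs_\C,\frs_\C]\subset\frl_\C$. The latter is equivalent to $(\frg,\frl)$ being a symmetric pair: $\sigma=1$ on $\frl$ and $\sigma=-1$ on $\frs$ is a Lie algebra involution exactly when $[\frs,\frs]\subset\frl$, using $[\frl,\frs]\subset\frs$, which holds because $\frs=\frl^\perp$ is $\frl$-stable. Complexification changes nothing here.

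The direction ``symmetric $\Rightarrow$ $c=0$'' is immediate. If $[\frs_\C,\frs_\C]\subset\frl_\C$, then each $[d_i,d_j]\in\frl_\C$. In the cubic term only the $\frs_\C$-component of $[d_i,d_j]$ is used when it is regarded as an element of $\Cl(\frs_\C)$ (this is how $c$ is defined in \cite{Kostant.cubic.1999}), so every summand of $c$ vanishes.

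For the converse, I would use the standard identification of $c$ with the $3$-form $\phi(x,y,z)=B([x,y],z)$ on $\frs_\C$. Concretely, rewrite
\[
c=\sum_{i,j,k} B([d_i,d_j],d_k)\, b_i b_j b_k .
\]
The coefficient tensor $\phi$ is totally antisymmetric, by invariance of $B$. Hence the product $b_ib_jb_k$ may be replaced by its total antisymmetrization. Under the Chevalley quantization map $\twedge\frs_\C\to\Cl(\frs_\C)$, which is a linear isomorphism, $c$ is therefore the image of a nonzero constant multiple of the $3$-form $\phi\in\twedge^3\frs_\C$. Consequently $c=0$ forces $\phi=0$, that is, $B([x,y],z)=0$ for all $x,y,z\in\frs_\C$. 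Since $B$ is nondegenerate on $\frg_\C$ and $\frl_\C=\frs_\C^\perp$, this says exactly $[\frs_\C,\frs_\C]\subset\frl_\C$.

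The main obstacle is the antisymmetrization step. One has to check that the diagonal terms do not spoil injectivity: those are the terms with repeated indices, where Clifford products differ from wedge products by lower-degree terms proportional to $B(b_i,b_j)$. Total antisymmetry of the coefficients makes all contractions vanish, because $\sum_i \phi(d_i,b_i,\cdot)$ is the contraction of an antisymmetric form against a symmetric tensor. To make this transparent I would choose an orthonormal basis of $\frs_\C$, so that $b_i=d_i$. Then every term with a repeated index either cancels by antisymmetry or has coefficient zero, and $c=\sum_{i<j<k} 6\,\phi(b_i,b_j,b_k)\,b_ib_jb_k$. The monomials $b_ib_jb_k$ with $i<j<k$ are linearly independent in $\Cl(\frs_\C)$ by the PBW-type basis, so $c=0$ if and only if $\phi=0$.

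As an alternative, if this bookkeeping becomes messy, one can use~\eqref{eq:Dsquare} together with Theorem~\ref{HD formula} to compare dimensions. Multiplicity freeness gives $\dim S=2^{\dim\frn_\C}$ equal to the sum of $\dim E_{w\rho_{\frg_\C}-\rho_{\frl_\C}}$ over $w\in W^1$. This route is less direct, however, so the $3$-form argument will be the primary plan.
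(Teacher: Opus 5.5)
Your proposal is correct and follows the paper's proof. Both arguments use Corollary~\ref{cor:SpinMF when c is zero} to reduce to $c=0$, rewrite $c$ with the totally antisymmetric coefficients $B([d_i,d_j],d_k)$, and conclude $[\frs_\C,\frs_\C]\perp\frs_\C$, i.e.\ $[\frs_\C,\frs_\C]\subset\frl_\C$. Your orthonormal-basis (equivalently, Chevalley quantization) argument adds something useful: it justifies the step the paper only asserts, that $c=0$ forces every coefficient to vanish, which needs justification because the monomials $b_ib_jb_k$ over all $i,j,k$ are not linearly independent in $\Cl(\frs_\C)$.
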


\begin{proof}
Using Corollary \ref{cor:SpinMF when c is zero}, we argue that $c=0$ if and only if $(\frg,\frl)$ is a symmetric pair.   
We can rewrite the cubic term $c$ as $\sum_{i,j,k} B(b_i,[b_j,b_k]) d_i d_j d_k$. Suppose that $c$ is zero, then for every $i,j,k$ we must have $B(b_i,[b_j,b_k]) =0$. Thus $[\frs_\C,\frs_\C] \perp \frs_\C$, and we conclude $[\frs_\C,\frs_\C] \subset \frl_\C $. Equivalently $(\frg_\C,\frl_\C)$ is a symmetric pair. 
Since $\frg$ is a real form of $\frg_\C$ and the involution defining $(\frg_\C,\frl_\C)$ as a symmetric pair is $\C$ linear, it follows that this involution restricts to an involution of $\frg$ such that $\frl$ is the fixed space of this involution. Hence $(\frg,\frl)$ is a symmetric pair.

Conversely, suppose that $(\frg,\frl)$ is a symmetric pair, then $(\frg_\C,\frl_\C)$ is a symmetric pair and 
$[\frs_\C,\frs_\C] \perp \frs_\C$.  
Thus  $c=\sum_{i,j,k} B(b_i,[b_j,b_k]) d_i d_j d_k$ is zero. 
\end{proof}

\skipover{
To end this section, we explicitly describe the highest weight vectors for the above modules $E_{w(\la+\rho_{\frg_\C})-\rho_{\frl_\C}}$. First, it is clear that the restriction of $F_\lambda$ to ${\frl_\C}$ contains $E_{w\lambda}$ with multiplicity one, for each $w\in W^1$; namely, $w\lambda$, $w\in W^1$ are extremal weights of $F_\lambda$, and they are also ${\frl_\C}$-dominant. Moreover, the vector $f_{w\la}$ of weight $w\lambda$ (which is unique up to scalar) is a highest weight vector for ${\frl_\C}$, since it is killed by all root vectors for roots in $w\Delta^+({\frg_\C},\frt)\supset\Delta^+({\frl_\C},\frt)$.

On the other hand, the spin module $S=\C\otimes S$ contains unique (up to scalar) vectors of weight $w\rho_{\frg_\C}-\rho_{\frl_\C}$; these are the highest weight vectors for the Dirac cohomology $H_D(\C)$. To write these vectors explicitly, we note that the corresponding adjoint weight is $w\rho_{\frg_\C}-\rho_{\frl_\C}+\rho({\frs_\C}^+)$. 
Note that
\[
w\Delta^+({\frg_\C},\frt) = \Delta^+({\frl_\C},\frt) \cup [w\Delta^+({\frg_\C},\frt)\cap \Delta({\frs_\C}^+)]\cup 
[(w\Delta^+({\frg_\C},\frt)\cap(-\Delta({\frs_\C}^+)))].
\]
If we denote $w\Delta^+({\frg_\C},\frt)\cap \Delta({\frs_\C}^+)$ by $\Delta_w$ and $w\Delta^+({\frg_\C},\frt)\cap (-\Delta({\frs_\C}^+))$ by $\Delta_w'$, we see that
\[
w\rho_{\frg_\C}-\rho_{\frl_\C}+\rho({\frs_\C}^+)=  \sum_{\alpha\in\Delta_w}\alpha.
\]
Namely, roots in $\Delta^+({\frl_\C},\frt)$ appear in $w\rho_{\frg_\C}-\rho_{\frl_\C}+\rho({\frs_\C}^+)$ twice, with coefficients $\half$ in $ w\rho_{\frg_\C} $ and $-\half$ in $ -\rho_{\frl_\C} $, as do the roots in $\Delta_w'$ 
(they appear in $ \half $ for $ w\rho_{\frg_\C} $ and $ -\half $ for $ \rho({\frs_\C}^+) $), hence they cancel.  
The roots in $\Delta_w$ appear twice with both coefficients equal to $\half$ in $w\rho_{\frg_\C}$ and $\rho({\frs_\C}^+)$, and they add up to $ \alpha $.

It is now clear that up to scalar, the vector in $S=\twedge{\frs_\C}^+$ of adjoint weight $w\rho_{\frg_\C}-\rho_{\frl_\C}+\rho({\frs_\C}^+)$ (and hence of spin weight $w\rho_{\frg_\C}-\rho_{\frl_\C}$) is
\[
\twedge_{\alpha\in\Delta_w} e_\alpha,
\]
where $e_\alpha\in{\frs_\C}^+$ denotes the root vector for the root $\alpha$. To summarize, we have proved

\begin{cor}\label{hwt vectors}
Up to scalar, the highest weight vector for the ${\frl_\C}$-submodule $E_{w(\la+\rho_{\frg_\C})-\rho_{\frl_\C}}$ is
\[
f_{w\la}\otimes \twedge_{\alpha\in\Delta_w} e_\alpha
\]
(with notation as above).
\end{cor}

}

\section{The proof of the equivalence of the rest of the statements in Theorem~\ref{thm:SSP.SMF.MF.theorem}}

\begin{proof}[Proof: \upshape\ref{pointb} $\iff$ \ref{pointd}.]
The spin module $S=\twedge\frn_\C$ has two $\frl_\C$-actions, the spin action and the adjoint action, which differ only by tensoring with a  one-dimensional character of weight  $ \rho_{\frn_\C} $ (see \cite[Prop.{3.6}]{Kostant.LMP.2000}). 
It follows that  $\frn_\bbC$ is a skew multiplicity free  $\frl_\C$ module if and only if the spin module $S$ is multiplicity free. By Lemma \ref{lem:SpinMF iff symmetri pair}, the latter is equivalent to $(\frg_\C,\frl_\C)$ being a symmetric pair, which is in turn equivalent to $G/L$ being symmetric.
\end{proof}

\begin{proof}[Proof: \upshape\ref{pointb} $\implies$ \ref{pointf}.]
If $\itG/\itL$ is symmetric then $\twedge(\frn_\C)$ is the spin module for $\frl_\C$ and by Lemma \ref{lem:SpinMF iff symmetri pair} and Corollary \ref{cor mult 1 triv} 
the $ \frl_\C $ module $\twedge(\frn_\C)$ has $|W^1| = |W_{\frg_\C}|/|W_{\frl_\C}|$ irreducible components. 
\end{proof}

\begin{proof}[Proof: \upshape\ref{pointf} $\implies$ \ref{pointe}.]
Suppose $\twedge(\frn_\C)$ has $|W^1| = |W_{\frg_\C}|/|W_{\frl_\C}|$ irreducible components. 
Then by Corollary \ref{cor mult 1 triv} 
these must be exactly the modules $E_{w\rho_{\frg_\C}-\rho_{\frl_\C}}$, $w\in W^1$; in particular, they are all  different from each other.

Note that $\twedge(\theta \frn_\C) \cong \twedge(\frn_\C)^*$ and $\twedge(\frn_\C \oplus \theta \frn_\C) \cong \twedge(\frn_\C) \otimes \twedge(\theta \frn_\C)$. Thus 
\[ \twedge(\frn_\C \oplus \theta \frn_\C)^{\frl_\C} \cong \End_{\frl_\C}(\twedge(\frn_\C)).\]
It now follows from Schur's lemma that  $\End_{\frl_\C}(\twedge(\frn_\C))$ is spanned by the projections to various $E_{w\rho_{\frg_\C}-\rho_{\frl_\C}}$; in particular, it is of dimension $|W^1|$.
\end{proof}

\begin{proof}[Proof: \upshape\ref{pointe} $\implies$ \ref{pointd}]
We prove the contraposition.  
So suppose that $\frn_\C$ is not skew multiplicity free. Then the Dirac cohomology $H_D(\C)$ is strictly contained in $\twedge(\frn_\C)$. 
On the other hand, by Corollary \ref{cor mult 1 triv}, $H_D(\C)$ has $|W^1|$ different irreducible submodules. Hence $\twedge(\frn_\C)$ has strictly more than $|W^1|$ different submodules and thus the dimension of the endomorphism space $\End_{\frl_\C}(\twedge(\frn_\C))$ is strictly greater than $|W^1|$.
\end{proof}

So far, we established the equivalence of the statements \ref{pointa}--\ref{pointf}.  

\begin{proof}[Proof: \upshape\ref{pointb} $\iff$ \ref{pointg}]
By Corollary \ref{cor:SpinMF when c is zero} and Lemma \ref{lem:SpinMF iff symmetri pair}, the space $\itG/\itL$ is symmetric if and only if $H_D(\C) = \twedge(\frn_\C)$. This implies that the spin action of $\frl_\C$ is such that $\Cas_{\frl_\C}$ acts on $\twedge(\frn_\C)$ by $||\rho_{\frg_\C}||^2 - ||\rho_{\frl_\C}||^2$ (see Corollary~\ref{cor mult 1 triv} and \eqref{eq:Dsquare}). The adjoint action and the spin action differ by a weight shift of $\rho_{\frn_\C}$ on $ \twedge(\frn_\C) $. 
However, the actions also differ on $ \twedge(\theta\frn_\C) $ by a weight shift of $- \rho_{\frn_\C}$, and these shifts cancel on $ \twedge(\frn_\C) \otimes \twedge(\theta\frn_\C) \simeq \twedge(\frn_\C \oplus \theta\frn_\C) $.  
Thus the spin action and the adjoint action agree and we conclude that $\itG/\itL$ is symmetric if and only if $\ad(\Cas_{\frl_\C})$ acts by $||\rho_{\frg_\C}||^2 - ||\rho_{\frl_\C}||^2 $.
\end{proof}

For the definition of the cubic term $ c $, we can use any basis $ \{ b_i \}_i $ of $ \frs_\C = \frn_\C \oplus \theta\frn_\C $.  
In particular, we can take a basis consisting of root vectors $ \{ e_{\alpha} \mid \alpha \in \Delta(\frg, \fra) \} $, repeated according to multiplicity, and chosen so that $\{ e_\alpha\}$ and $\{ e_{-\alpha}\}$ form dual bases with respect to $B$. Then the cubic term can be written as
\begin{equation}
c = \sum_{\alpha, \beta, \gamma \in \Delta} B(e_{-\alpha}, [e_{-\beta}, e_{-\gamma}]) \, e_{\alpha} e_{\beta} e_{\gamma} .
\end{equation}
Now note that $ B(e_{-\alpha}, [e_{-\beta}, e_{-\gamma}]) = 0 $ unless $ \alpha + \beta + \gamma = 0 $.  
This means the term $ e_{\alpha} e_{\beta} e_{\gamma} $ only appears when $ \alpha + \beta + \gamma = 0 $.  
We divide the terms into two types; 
(a)\ 
two of the terms are positive root vectors and one is negative, and 
(b)\ 
two of the terms are negative root vectors and one is positive.  
The sum of the terms of type (a) is denoted by $ c^+ $ and the sum of the terms of type (b) is denoted by $ c^- $, 
so that we have $ c = c^+ + c^- $.  

If we identify the spin module $ S $ as $ \twedge(\frn_\bbC) $, 
the multiplication by $c^-$ amounts to the differential $ d_{\wedge} $ on the space of cochains for the Lie algebra cohomology of $\frn_\C$ and multiplication by $c^+$ amounts to its dual $ \partial_{\wedge} $.  
The $\frn$ cohomology is given by the kernel of the Laplacian $ L = [c^+,c^-]$ on $\twedge(\frn_\C)$. Note that $(c^+)^2 = 0$ since it is a differential and $(c^-)^2=0$ since it is dual to $c^+$. Thus $c^2 = [c^+,c^-] = L $. For more details see \cite{Kostant.I.1961}, \cite{HPbook} or \cite{HPRarXiv}.

\begin{proof}[Proof: \upshape\ref{pointb} $\iff$ \ref{pointh}]
Note that $\itG/\itL$ is symmetric if and only if the cubic element $c$ for the pair $(\frg_\C,\frl_\C)$ is zero by Corollary~\ref{cor:SpinMF when c is zero} (and the claim \ref{pointd}). Which is equivalent to $c^2 = 0$ (since $\C$ is finite dimensional). Thus $[c^+,c^-] = 0$ on the whole $ \twedge(\frn_\C) $ is equivalent to $\itG/\itL$ symmetric. 
Since $ L = [c^+,c^-] $ is the Laplacian, by the Hodge decomposition, we find that $L=[c^+,c^-]=0$ is equivalent to $d_{\wedge} = 0$.
\end{proof}

\begin{proof}[Proof: \upshape\ref{pointa} $\iff$ \ref{pointi}]
$ \itG_\C/ \itP_\C $ is Hermitian symmetric if and only if every irreducible factor is so.  
Hence we can assume $ \itG_\C $ is simple.  
In this case, it is well known that the parabolic $ \frp_\C $ has abelian nilpotent radical if and only if 
it is corresponding to the simple roots $ \Pi \setminus \{ \alpha_0 \} $, where $ \alpha_0 $ has height $ 1 $ in the highest root.  
Such parabolic subalgebra is called cominuscule, and that characterizes Hermitian symmetric spaces.  
See Lemma 2.2 and \S{1}, Introduction of \cite{RRS.1992}.
\end{proof}

\begin{proof}[Proof: \upshape\ref{pointa} $\implies$ \ref{pointj}, \ref{pointk}, \ref{pointl}]
Let us assume $ \frn_\C $ is abelian.  
Take a Borel subalgebra $ \frb_{\frg_\C} \subset \frg_\C $ which contains $ \frn_\C $ and a Borel subalgebra $ \frb_{\frl_\C} $ of $ \frl_\C $, so that 
$ \frb_{\frg_\C} = \frb_{\frl_\C} \oplus \frn_\C $, and consider the corresponding Borel subgroups $ \itB_{\itG_\C} \subset \itG_\C $ and $ \itB_{\itL_\C} \subset \itL_\C $.  
Since $ \itG_\C/ \itP_\C $ has finitely many $ \itB_{\itG_\C} $-orbits, 
$ \frn_\C $ has finitely many $ \Ad(\itB_{\itG_\C}) $ orbits.  
Since $ \itB_{\itG_\C} = \itB_{\itL_\C} \ltimes \itN_\C $ is a semidirect product and 
$ \itN_\C $ acts on $ \frn_\C $ trivially, because $ \frn_\C $ is abelian, 
the number of $ \itB_{\itG_\C} $-orbits in $ \frn_\C $ and that of $ \itB_{\itL_\C} $-orbits are the same.  
Hence $ \itB_{\itL_\C} $ has finitely many orbits in $ \frn_\C $.  
In particular, there is an open dense $ \itB_{\itL_\C} $ orbit, which implies $ \frn_\C $ is a spherical $ \itL_\C $ variety.  
Thus, by \cite[Theorem 3.1]{Knop.MFA.1998}, the symmetric algebra $ S(\frn_\C) $ decomposes without multiplicity.  
This proves \ref{pointj}.

Since $ \itB_{\itL_\C} $ has an open dense orbit in $ \frn_\C $ and there is an $ \itL_\C $-equivariant open embedding of 
$ \frn_\C $ into $ \itG_\C/ \itP_\C $ via the exponential map, 
$ \itB_{\itL_\C} $ has an open dense orbit in $ \itG_\C/ \itP_\C $.   
This means $ \itG_\C/ \itP_\C $ is $ \itL_\C $-spherical, i.e., \ref{pointk} holds.

Finally, the $ \itL_\C $ orbits on the double flag variety 
$ \itL_\C/ \itB_{\itL_\C} \times \itG_\C/ \itP_\C $ are parametrized by the double coset space 
$ \itB_{\itL_\C} \backslash \itG_\C/ \itP_\C $.  
From \ref{pointk}, it has finitely many double cosets and the double flag variety is of finite type, i.e., \ref{pointl} holds. 
\end{proof}

\section{Application to  de Rham cohomology}\label{sec:deRham.cohomology}

\subsection{Theorems of Cartan-de Rham and Cartan-Borel}

We begin with two classical theorems.  
In the following two theorems, $ K $ and $ R $ are general compact Lie groups satisfying the assumptions of the theorems.

\begin{thm}[Cartan-de Rham]\label{thm:Cartan-deRham}
Let $\itK$ be a connected compact Lie group and $ R $ a closed subgroup.  
We assume $\itK/R$ is a symmetric space.  
Then de Rham cohomology ring of $ \itK/R $ is given by
\begin{equation*}
H_{dR}(\itK/R) \simeq (\twedge(\frk_\C/\frr_\C)^\ast)^R .
\end{equation*} 
%
\end{thm}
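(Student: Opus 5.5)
We prove the theorem by the classical route: de Rham's theorem plus averaging over $\itK$, and then the symmetric structure to kill the differential.

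First, since $\itK$ is compact and connected, each $k\in\itK$ acts on $\itK/R$ by a diffeomorphism homotopic to the identity. So $\itK$ acts trivially on $H_{dR}(\itK/R)$. Averaging over $\itK$ with Haar measure gives a chain map $\omega\mapsto\int_{\itK} k^\ast\omega\,dk$ onto the $\itK$-invariant forms, and it is chain homotopic to the identity. Hence the inclusion $\Omega(\itK/R)^{\itK}\hookrightarrow\Omega(\itK/R)$ is a quasi-isomorphism of differential graded algebras.

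Second, a $\itK$-invariant form is determined by its value at the base point $eR$. The tangent space there is $\frk/\frr$, and the value must be invariant under the isotropy group $R$, acting by $\Ad$. This identifies $\Omega(\itK/R)^{\itK}\simeq(\twedge(\frk/\frr)^\ast)^R$ as algebras. Complexifying gives $(\twedge(\frk_\C/\frr_\C)^\ast)^R$; since $R$ acts by real operators, the invariants commute with complexification. Note that $R$ may be disconnected here; only $R$-invariance, not $\frr$-invariance, is used, and that is what we need.

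Third, we show the induced differential on $(\twedge(\frk/\frr)^\ast)^R$ is zero. This is the step that uses symmetry and is the main point. Let $\sigma$ be the involution with $\frk^\sigma\supseteq\frr$, write $\frk=\frr\oplus\frq$, and set $\tau=$ the geodesic symmetry at $eR$. Then $\tau$ induces $-1$ on $\frq\simeq\frk/\frr$, hence $(-1)^p$ on invariant $p$-forms.

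The precise claim to check is that $\tau$ maps $\itK$-invariant forms to $\itK$-invariant forms. This holds because $\tau(k\cdot x)=\sigma(k)\cdot\tau(x)$ and $\sigma$ is an automorphism of $\itK$. Here one must assume that $\sigma$ exists on $\itK$ with $(\itK^\sigma)_0\subseteq R\subseteq\itK^\sigma$, which is the meaning of ``symmetric space''; then $\tau$ is well defined on $\itK/R$.

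Since $\tau$ commutes with $d$, for an invariant $p$-form $\omega$ the form $d\omega$ is an invariant $(p+1)$-form. We then have $(-1)^{p+1}d\omega=\tau^\ast d\omega=d\tau^\ast\omega=(-1)^p d\omega$, which forces $d\omega=0$.

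Alternatively, one can argue algebraically. The Chevalley–Eilenberg formula for the relative complex involves $B(x,[y,z])$ with $x,y,z\in\frq$, and this vanishes because $[\frq,\frq]\subseteq\frr\perp\frq$; this is the same computation as in Lemma~\ref{lem:SpinMF iff symmetri pair}.

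Combining the three steps, $H_{dR}(\itK/R)\simeq H(\Omega(\itK/R)^{\itK})=(\twedge(\frk_\C/\frr_\C)^\ast)^R$, and the isomorphism is one of rings because every map involved respects the wedge product.
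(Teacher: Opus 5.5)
Your proof is correct, and it is the classical argument the paper relies on; the paper gives no proof of its own, only a pointer to \cite[\S 4.1]{CNP.arxiv2023}. That argument runs exactly as yours: compute $H_{dR}(K/R)$ by $K$-invariant forms, identify these with $(\twedge(\frk/\frr)^\ast)^R$, and show the differential vanishes by the symmetric structure, via either the geodesic symmetry or $[\frq,\frq]\subseteq\frr$.

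You were also right to insist on the group-level condition $(K^\sigma)_0\subseteq R\subseteq K^\sigma$. For a merely Riemannian symmetric quotient such as $\mathrm{SU}(2)/\{e\}\cong S^3$, the formula would predict $8$-dimensional cohomology instead of $2$-dimensional.
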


A short and comprehensive proof of this theorem can be found in \cite[\S{4.1}]{CNP.arxiv2023}, for example.  

Let $ T \subset R $ be a maximal torus, and let us denote the Lie algebra of $T$ by $ \frt $.  
The Weyl groups $ W_{\frk_\C} = W(\frk_\C, \frt_\C) $ and $ W_{\frr_\C} = W(\frr_\C, \frt_\C) $ are considered with respect to this torus.   
Note that $ \rank \frk_\C $ might be strictly larger than $ \rank \frr_\C $ so that $ \frt_\C $ might not be a Cartan subalgebra of $ \frk_\C $.  In this case, we refer to the Weyl group $ W(\frk_\C, \frt_\C) $ as the reflection group generated by the restricted roots 
$ \Delta(\frk_\C, \frt_\C) $.

Let $\mathcal{P}_{\frk_\C}$ be a space of primitives generating $S(\frk_\C^\ast)^{\frk_\C}$, there is a well known transgression map (\cite{CCC.III.1976} and \cite{CGKP.2025}), $t_{\frk_\C} : S(\frk_\C^\ast)^{\frk_\C} \to \twedge(\frk_\C^\ast)^{\frk_\C}$, and the primitives for $\twedge(\frk_\C^\ast)^{\frk_\C}$ which are linearly independent generator of this algebra, are given by $t_{\frk_\C}(\mathcal{P}_{\frk_\C})$ and denoted $\bbP_{\frk_\C}$. There is an analogous transgression map for the relative algebra $\twedge((\frk_\C/\frr_\C)^\ast)$, 
\[ 
t_{\frk_\C/\frr_\C} : S(\frk_\C^\ast)^{\frk_\C} \to \twedge((\frk_\C /\frr_\C)^\ast)^{\frk_\C}.
\]
The relative space of primitives $\bbP_{\frk_\C/\frr_\C}$ are defined as $t_{\frk_\C/\frr_\C}(\mathcal{P}_{\frk_\C})$ 
(see \cite[pg 427]{CCC.III.1976} and \cite[\S 4.3]{CGKP.2025}).  
The pair $(\frk_\C,\frr_\C)$ is called a Cartan pair if 
$ \bbP_{\frk_\C/\frr_\C} 
= \dim G - \dim R $ (\cite[pg 431]{CCC.III.1976}), 
and symmetric pairs are Cartan pairs (\cite[pg 448]{CCC.III.1976}).  
We refer the reader to \cite{Onishchik.1994} for a modern exposition of this material.

Now we quote the second classical theorem.   
See \cite[pg. 432]{CCC.III.1976}; for symmetric pairs see also \cite{CGKP.2025} and this case suffices for our present purpose.

\begin{thm}[Cartan-Borel]\label{thm:Cartan-Borel}
Let $K$ be a compact connected Lie group, and let $R$ be a closed connected subgroup of $K$, such that $(\frk,\frr)$ is a Cartan pair. Then 
\begin{equation*}
H_{dR}(\itK/R) \simeq \twedge(\bbP_{\frk_\C/\frr_\C}) \otimes \Bigl( S(\frt_\C^*)^{W_{\frr_\C}}/ \langle S_+(\frt_\C^*)^{W_{\frk_\C}} \rangle \Bigr)
\end{equation*}
where $ S_+(\frt_\C^*) $ is the augmentation ideal of positive degree polynomials and 
$ \bbP_{\frk_\C/\frr_\C} $ is the space of primitives, on which $ \frr_\C $ acts trivially. 
Moreover $ \dim\bbP_{\frk_\C/\frr_\C} = \rank \frk_\C - \rank \frr_\C $.  
\end{thm}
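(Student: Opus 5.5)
The plan is to derive the isomorphism from H.~Cartan's algebraic model for the cohomology of a homogeneous space, and then to use the Cartan pair hypothesis to collapse that model to a Koszul complex.

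\emph{Step 1: Cartan's model.} Since $K$ is compact and connected and $R$ is closed and connected, $H_{dR}(K/R)$ is the relative Lie algebra cohomology $H(\frk_\C,\frr_\C;\C)$. Via the Weil algebra $W(\frk_\C)=S(\frk_\C^*)\otimes\twedge(\frk_\C^*)$ and the transgression $t_{\frk_\C}$, Cartan's theorem (\cite{CCC.III.1976}) gives
\[
H_{dR}(K/R)\simeq H\bigl(S(\frr_\C^*)^{\frr_\C}\otimes\twedge(\bbP_{\frk_\C}),\,d\bigr).
\]
Here $d$ vanishes on $S(\frr_\C^*)^{\frr_\C}$. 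On a primitive $t_{\frk_\C}(f)$ with $f\in\mathcal P_{\frk_\C}$, the differential is $d(t_{\frk_\C}(f))=f|_{\frr_\C}$, which is then extended as a derivation. By Chevalley's restriction theorem we replace $S(\frr_\C^*)^{\frr_\C}$ with $A:=S(\frt_\C^*)^{W_{\frr_\C}}$. Then $d$ sends a primitive to the restriction of its polynomial to $\frt_\C$. Call $B\subset A$ the image of $S(\frk_\C^*)^{\frk_\C}\to A$, and note that the ideal $AB_+$ is exactly $\langle S_+(\frt_\C^*)^{W_{\frk_\C}}\rangle$.

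\emph{Step 2: the Cartan condition as a regularity statement.} $A$ is a polynomial ring in $r=\rank\frr_\C$ variables, so it is Cohen--Macaulay. $A$ is finite over $B$: already $S(\frt_\C^*)$ is integral over the restrictions of $\frk_\C$-invariants, because the only common zero of these restrictions in $\frt_\C$ is $0$. Hence we can choose homogeneous generators $f_1,\dots,f_n$ of $\mathcal P_{\frk_\C}$ such that the restrictions $\bar f_1,\dots,\bar f_r$ form a homogeneous system of parameters of $A$, and therefore a regular sequence. The Cartan pair hypothesis, which is the equality in \cite[pg 431]{CCC.III.1976}, is then used in the form of its standard equivalent: every $\bar f_j$ with $j>r$ lies in the ideal $(\bar f_1,\dots,\bar f_r)$. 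Writing $\bar f_j=\sum_{i\le r}a_{ji}\bar f_i$ with $a_{ji}\in A$, we replace $t(f_j)$ by the cocycle $t(f_j)-\sum_i a_{ji}t(f_i)$ for $j>r$. This is a triangular change of generators of the free graded-commutative algebra, so the complex splits as a tensor product
\[
\bigl(A\otimes\twedge(t(f_1),\dots,t(f_r)),d\bigr)\otimes\bigl(\twedge(\text{modified }t(f_j),\,j>r),\,0\bigr).
\]

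\emph{Step 3: computing the cohomology.} The first factor is the Koszul complex of the regular sequence $\bar f_1,\dots,\bar f_r$. Its cohomology is $A/(\bar f_1,\dots,\bar f_r)$, concentrated in degree zero. By Step 2, the ideal $(\bar f_1,\dots,\bar f_r)$ equals $AB_+=\langle S_+(\frt_\C^*)^{W_{\frk_\C}}\rangle$. The second factor is $\twedge(\bbP_{\frk_\C/\frr_\C})$, where the modified primitives span $\bbP_{\frk_\C/\frr_\C}$. These primitives have zero differential and are $\frr_\C$-invariant, and there are $n-r=\rank\frk_\C-\rank\frr_\C$ of them. The Künneth formula then gives the stated isomorphism. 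It is an algebra isomorphism because all identifications are multiplicative: the Koszul cohomology is the quotient ring, and the modified primitives are cocycles.

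\emph{Main obstacle.} The delicate point is Step 2. One has to show that the Cartan pair condition, as formulated in \cite{CCC.III.1976}, is equivalent to the statement that a subset of exactly $\rank\frr_\C$ restricted primitives forms a regular sequence which generates all the others. This is also what forces $\dim\bbP_{\frk_\C/\frr_\C}=\rank\frk_\C-\rank\frr_\C$. My first approach would be a Poincaré series comparison. The Koszul description gives an explicit series, and the Cartan condition (the equality of dimensions) is exactly what makes this series agree with the Euler characteristic and total dimension of the model. Any non-regularity, or any leftover $\bar f_j$ outside the ideal, would produce additional cohomology and break the dimension count.
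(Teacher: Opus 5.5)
The paper gives no proof of this statement. It quotes it from \cite{CCC.III.1976}, and your outline (Cartan's model $A\otimes\twedge P_{\frk_\C}$ with $A=S(\frt_\C^*)^{W_{\frr_\C}}$, then a Koszul complex on a regular sequence tensored with an exterior algebra on cocycle primitives) is essentially the argument of that source. Several pieces are fine: finiteness of $A$ over $B$ via semisimplicity of elements of $\frt_\C$, the triangular change of generators, and the K\"unneth step. But the one step where the Cartan hypothesis is actually used is not carried out, so as written there is a genuine gap.

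The gap has two parts.

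\emph{First}, your sentence ``hence we can choose generators such that $\bar f_1,\dots,\bar f_r$ form a homogeneous system of parameters'' does not follow from finiteness. Prime avoidance gives parameters inside $I=AB_+$, but these are $A$-combinations of powers of the $\bar f_i$, not restrictions of primitives. With primitives in distinct degrees such parameters cannot be extracted in general. For instance, in $\C[x,y]$ the $\mathfrak m$-primary ideal generated by $xy$, $y(x^2-y^2)$, $x^4(x-y)$ (degrees $2,3,5$) has the following property: even after adding to each generator a polynomial in the lower-degree ones, every pair still vanishes at one of $[1:0]$, $[0:1]$, $[1:1]$.

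\emph{Second}, the ``standard equivalent'' of the Cartan condition is exactly what needs proof. The Poincar\'e-series heuristic does not supply it.

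Both parts are fixed by one observation. Write $\sigma\colon P_{\frk_\C}\to A$, $t(f)\mapsto\bar f$. Let $\hat P\subset P_{\frk_\C}$ be the Samelson subspace, i.e.\ the primitives lying in the image of $H(K/R)\to H(K)$. In the model this map is induced by $A\to A/A_+=\C$, and the Cartan condition of \cite{CCC.III.1976} is $\dim\hat P=\rank\frk_\C-\rank\frr_\C$.
\begin{itemize}
\item Suppose $\Phi$ is a cocycle mapping to $x\in P$. Its exterior-degree-one part is $x+\sum_i a_i x_i$ with $a_i\in A_+$. The exterior-degree-zero part of $d\Phi$ is $\sigma(x)+\sum_i a_i\sigma(x_i)$, and it must vanish.
\item Conversely, if $\sigma(x)+\sum_i a_i\sigma(x_i)=0$, then $x+\sum_i a_ix_i$ is itself a cocycle.
\item Hence $\hat P=\sigma^{-1}(A_+I)$ and $P/\hat P\cong I/A_+I$. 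So the Cartan condition says precisely that $I$ is minimally generated by $r=\rank\frr_\C$ elements.
\item Take a homogeneous basis of $P$ with $x_{r+1},\dots,x_n$ spanning $\hat P$. Then $\bar f_j\in A_+I$ for $j>r$. By graded Nakayama, $\bar f_1,\dots,\bar f_r$ generate $I$.
\item Since $A/I$ is finite-dimensional and $A$ is a polynomial ring in $r$ variables, $\bar f_1,\dots,\bar f_r$ is a regular sequence.
\end{itemize}
The same computation shows that your modified cocycles $t(f_j)-\sum_i a_{ji}t(f_i)$, $j>r$, map onto a basis of $\hat P$. That is what identifies them with $\bbP_{\frk_\C/\frr_\C}$ and gives $\dim\bbP_{\frk_\C/\frr_\C}=\rank\frk_\C-\rank\frr_\C$; you asserted both rather than proving them.
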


\subsection{Consequence of the skew multiplicity free action}

We go back to the setting in \S\ref{sec:abelian.nilrad} and assume that the equivalent conditions in Theorem~\ref{thm:SSP.SMF.MF.theorem} hold. 
Recall that $ G/P \simeq K/R $ with $ R = \itK^{\sigma} = \itL^{\theta} $, 
and $ \frr = \frk^{\sigma} = \frl^{\theta} $ is a symmetric subalgebra of $ \frk $ and $ \frl $ at the same time.  
Thus, $ R $ is a maximal compact subgroup of $ L $ (disconnected in general) and $ (\frl_\C, \frr_\C) $ is a symmetric pair. 
Let us denote $ \frq = \lie{k}^{-\sigma} = \{ x \in \lie{k} \mid \sigma(x) = - x \} $, which is isomorphic to 
the tangent space $ T_{eR}(\itK/R) $ as an $ R $-module.  

From Theorems \ref{thm:Cartan-deRham} and \ref{thm:Cartan-Borel}, we get 

\begin{cor}
Suppose $G$ (and hence $K$) is connected. Then 
\begin{equation*}
H_{dR}(\itG/\itP) = H_{dR}(\itK/R) \simeq (\twedge(\frq_\C))^R .
\end{equation*}
Moreover, if $ R $ is also connected, 
\begin{equation*}
\dim H_{dR}(\itG/\itP) = \dim H_{dR}(\itK/R) = 2^{\rank \frk_\C - \rank \frr_\C} |W_{\frk_\C}/W_{\frr_\C}| .
\end{equation*}
\end{cor}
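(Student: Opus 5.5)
The plan is to derive both statements directly from Theorems~\ref{thm:Cartan-deRham} and \ref{thm:Cartan-Borel}, applied to the compact symmetric space $K/R$ obtained from Theorem~\ref{thm:SSP.SMF.MF.theorem}\ref{pointc}.

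For the first statement, we use that $G$ is connected, so $K$ is a connected compact Lie group. The set $R = K\cap P$ is a closed subgroup of $K$, since $P$ is closed in $G$. By condition \ref{pointc}, $K/R$ is symmetric with involution $\sigma$, and we have $\frr = \frk^\sigma$, $\frk = \frr\oplus\frq$. Theorem~\ref{thm:Cartan-deRham} then gives
\[
H_{dR}(K/R)\simeq \bigl(\twedge(\frk_\C/\frr_\C)^*\bigr)^R .
\]
The map $\frq_\C \to \frk_\C/\frr_\C$ is an $R$-equivariant isomorphism, because $\frq$ is an $R$-stable complement ($\Ad(R)$ commutes with $\sigma$). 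We may also identify $\frq_\C^*\simeq \frq_\C$ $R$-equivariantly via the Killing form, which is $R$-invariant and nondegenerate on $\frq$ (negative definite there, since $\frq\subset\frk$). Together with $G/P\simeq K/R$ from the earlier lemma, this yields $H_{dR}(G/P)\simeq(\twedge\frq_\C)^R$.

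For the dimension count, assume $R$ is connected. Since symmetric pairs are Cartan pairs, Theorem~\ref{thm:Cartan-Borel} applies and gives
\[
H_{dR}(K/R)\simeq \twedge(\bbP_{\frk_\C/\frr_\C})\otimes S(\frt_\C^*)^{W_{\frr_\C}}/\langle S_+(\frt_\C^*)^{W_{\frk_\C}}\rangle .
\]
The first factor has dimension $2^{\rank\frk_\C-\rank\frr_\C}$, using $\dim\bbP_{\frk_\C/\frr_\C}=\rank\frk_\C-\rank\frr_\C$. So it remains to show that the second factor has dimension $|W_{\frk_\C}/W_{\frr_\C}|$.

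This second step is the main obstacle. In the equal-rank case it is classical: by Chevalley, $S(\frt^*)$ is free of rank $|W_{\frk_\C}|$ over $S(\frt^*)^{W_{\frk_\C}}$, and taking $W_{\frr_\C}$-invariants shows that $S(\frt^*)^{W_{\frr_\C}}$ is free of rank $|W_{\frk_\C}|/|W_{\frr_\C}|$. The quotient therefore has that dimension.

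In the unequal-rank case, $\frt$ is only a maximal torus of $R$, and $W_{\frk_\C}$ means the reflection group generated by the restricted roots $\Delta(\frk_\C,\frt_\C)$. The first thing I would try is to show that this group is a genuine finite reflection group on $\frt$ containing $W_{\frr_\C}$. For this I would use that, for a symmetric pair, the restricted root system of $\frk$ relative to the maximal torus of $\frr$ is again a (possibly non-reduced) root system; this is the standard structure theory for symmetric pairs of compact type. I would also need that the restriction $S(\frk_\C^*)^{\frk_\C}\to S(\frt_\C^*)^{W_{\frk_\C}}$ generates the same ideal as $S_+(\frt_\C^*)^{W_{\frk_\C}}$, which is exactly what the Cartan pair condition guarantees. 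Once both facts are in place, the same Chevalley freeness argument gives $\dim = |W_{\frk_\C}/W_{\frr_\C}|$. Multiplying the two factors yields the stated formula.
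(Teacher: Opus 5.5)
Your proposal is correct and follows the paper, which derives the corollary directly from Theorems~\ref{thm:Cartan-deRham} and \ref{thm:Cartan-Borel}. You add two things the paper leaves implicit: the $R$-equivariant identification $(\frk_\C/\frr_\C)^*\simeq\frq_\C$, and the Chevalley count $\dim S(\frt_\C^*)^{W_{\frr_\C}}/\langle S_+(\frt_\C^*)^{W_{\frk_\C}}\rangle=|W_{\frk_\C}/W_{\frr_\C}|$. One simplification: your extra step comparing the ideal generated by restricted $\frk_\C$-invariants with $\langle S_+(\frt_\C^*)^{W_{\frk_\C}}\rangle$ is unnecessary, because Theorem~\ref{thm:Cartan-Borel} is already stated with the latter ideal. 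So in the unequal-rank case you only need that $W_{\frk_\C}$ is a finite reflection group on $\frt_\C$ containing $W_{\frr_\C}$.
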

(We will extend this statement to pairs with disconnected $R$ later in this section, see Theorem \ref{thm:dim.Poincare.series.of.cohomology}.)
\begin{lem}\label{lemma:qC.isomorphic.to.nC}
$ \frq_\C \simeq \frn_\C $ as $ \frr_\C $-modules.
\end{lem}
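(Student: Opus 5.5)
We construct an explicit $\frr$-equivariant linear isomorphism $\frn \to \frq$ and then complexify it. The natural candidate is $\varphi: \frn \to \frk$, $\varphi(x) = x + \theta x$.

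\emph{Image and injectivity.} For $x \in \frn$ we have $\theta\varphi(x) = \varphi(x)$, so $\varphi(x) \in \frk$. Since $x \in \frn$ and $\theta x \in \theta\frn$, we also have $\varphi(x) \in \frs = \frn \oplus \theta\frn$. The involution $\sigma$ of Theorem~\ref{thm:SSP.SMF.MF.theorem}\ref{pointb} satisfies $\frg^{-\sigma} = \frs$, so $\varphi(x) \in \frk \cap \frs = \frk^{-\sigma} = \frq$. Injectivity follows from $\frn \cap \theta\frn = 0$: if $x + \theta x = 0$ then $x = -\theta x \in \frn \cap \theta\frn$, hence $x = 0$.

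\emph{Surjectivity.} We compare dimensions. Because $\theta$ and $\sigma$ commute (proof of \ref{pointb}$\Rightarrow$\ref{pointc}), $\theta$ preserves $\frs$, and
\[
\frs = \frs^{\theta} \oplus \frs^{-\theta}.
\]
Here $\frs^{\theta} = \frq$, and $\frs^{-\theta}$ is the image of the analogous map $x \mapsto x - \theta x$, which is also injective. Hence $\dim \frq + \dim \frs^{-\theta} = \dim \frs = 2\dim\frn$, with each summand at least $\dim\frn$. So both equal $\dim\frn$, and $\varphi$ is bijective.

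\emph{Equivariance.} For $r \in R = \itL^\theta$ and $x \in \frn$, we have $\Ad(r)x \in \frn$ because $\itL$ normalizes $\frn$. Also $\Ad(r)$ commutes with $\theta$, since $r$ is $\theta$-fixed. Therefore
\[
\Ad(r)\varphi(x) = \Ad(r)x + \theta\,\Ad(r)x = \varphi(\Ad(r)x).
\]
Infinitesimally, the same computation with $\ad$ gives $\frr$-equivariance. Complexifying $\varphi$ yields $\frn_\C \simeq \frq_\C$ as $\frr_\C$-modules, and in fact as $R$-modules.

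The only point needing care is identifying $\frq$ with $\frk \cap \frs$, that is, that the $-1$ eigenspace of $\sigma$ is exactly $\frs$. This uses that $\sigma$ is the involution with $\frg^{\sigma} = \frl$ and $\frg^{-\sigma} = \frs$, which was established in the proof of \ref{pointb}$\Rightarrow$\ref{pointc}. I expect no real obstacle beyond this bookkeeping.
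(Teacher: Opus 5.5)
Your proposal is correct and is essentially the paper's proof. The paper uses the same map $x \mapsto x + \theta x$ from $\frn$ into $(\frg^{-\sigma})^{\theta} = (\frg^{\theta})^{-\sigma} = \frq$, simply asserts that it is an $\frr$-module isomorphism, and then complexifies; you supply the details it leaves implicit (injectivity from $\frn \cap \theta\frn = 0$, surjectivity from the dimension count on $\frs = \frs^{\theta} \oplus \frs^{-\theta}$, and equivariance because $\Ad(r)$ preserves $\frn$ and commutes with $\theta$), and these are all sound.
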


\begin{proof}
Notice that $ \frg^{-\sigma} = \frs = \frn \oplus \theta \frn $.  
Then $ \frn \ni x \mapsto x + \theta x \in (\frg^{-\sigma})^{\theta} = (\frg^{\theta})^{-\sigma} = \frq $ is an $ \frr $ module isomorphism. 
By complexification, we get the isomorphism of the lemma.
\end{proof}

For the Lie algebra cohomology of $ \frn_\C $, Kostant \cite[Theorem 5.14]{Kostant.I.1961} described it completely.  
Let $ W^1 = W^1_{\frg_\C,\frl_\C} = W_{\frl_\C} \backslash W_{\frg_\C} $ be the set of minimal length representatives. 

\begin{thm}[Kostant]\label{thm:Kostant.n.cohomology}
As an $ \Ad(\itL_\C) $-module, 
\begin{equation}\label{eq:Kostant.formula.n.homology}
H(\frn_\C, \C) \simeq \bigoplus_{w \in W^1} E_{w \rho_{\frg_\C} - \rho_{\frg_\C}} .
\end{equation}
Moreover, the representation $ E_{w \rho_{\frg_\C} - \rho_{\frg_\C}} $ lives in the space of degree $ \ell(w) $, the length of $ w \in W_{\frg_\C} $.
\end{thm}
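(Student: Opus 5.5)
We deduce Kostant's theorem from the Dirac cohomology machinery of \S\ref{sec: mult1}, using the Laplacian $L=[c^+,c^-]=c^2$ identified above. The plan has three steps: identify $H(\frn_\C,\C)$ with $\ker L$, identify $\ker L$ with $H_D(\C)$, and then translate from the spin action back to the adjoint action.

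First, we realise the cochain complex as $\twedge(\frn_\C)^*\simeq\twedge(\theta\frn_\C)$, or dually as $\twedge(\frn_\C)=S$, with differential $d_\wedge$ given by multiplication by $c^-$. We use the positive definite inner product of \cite[Remark 4.8]{HPRarXiv}, for which $c^+$ is the adjoint of $c^-$. Finite-dimensional Hodge theory then gives an $\frl_\C$-equivariant isomorphism
\[
H(\frn_\C,\C)\simeq\ker L=\ker c^+\cap\ker c^- .
\]
Equivariance holds because $c^\pm$ commute with the spin action of $\frl_\C$: they are the $\frt_\C$-weight components of the $\frl_\C$-invariant element $c$, and the adjoint $\frl_\C$-action preserves $\frn_\C$ and $\theta\frn_\C$. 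Next, since $L=c^2$ and $c$ is self-adjoint, $\ker L=\ker c$. For the trivial module $F_0$ the linear part of $D$ acts by zero, so $D=\tfrac12 c$ on $S$. Hence $\ker L=\ker D=H_D(\C)$. By Theorem~\ref{HD formula} with $\lambda=0$, this is $\bigoplus_{w\in W^1}E_{w\rho_{\frg_\C}-\rho_{\frl_\C}}$ for the spin action.

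The third step converts spin weights to adjoint weights. By \cite[Prop.~3.6]{Kostant.LMP.2000}, the spin and adjoint actions on $\twedge(\frn_\C)$ differ by a character of weight $\rho_{\frn_\C}=\rho_{\frg_\C}-\rho_{\frl_\C}$. Care is needed with the direction of this shift and with which copy we use: if cochains are $\twedge(\theta\frn_\C)$, the top-degree weight changes sign. Concretely, the vector $\twedge_{\alpha\in\Delta_w}e_\alpha$ has adjoint weight $w\rho_{\frg_\C}-\rho_{\frl_\C}+\rho_{\frn_\C}$, where $\Delta_w=w\Delta^+\cap\Delta(\frn_\C)$. Passing to cochains (the dual, $\twedge\theta\frn_\C$), the cohomology class is represented by $\twedge_{\alpha\in-w\Delta^-\cap\Delta(\frn)}e_{-\alpha}$, whose weight is the sum of the roots in $w\Delta^+\cap(-\Delta(\frn_\C))$. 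That sum equals $w\rho_{\frg_\C}-\rho_{\frg_\C}$. This gives \eqref{eq:Kostant.formula.n.homology}, with the highest weight vectors explicit.

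Finally, for the degree statement, the representing vector is a wedge of $|\{\alpha>0: w^{-1}\alpha<0\}|$ root vectors. Because $w\in W^1$ is a minimal coset representative, every such root lies in $\Delta(\frn_\C)$, and the count is $\ell(w)$ (after replacing $w$ by $w^{-1}$ depending on convention). Each $E_\mu$ sits in a single degree because the adjoint action preserves degree, so the whole submodule lies in degree $\ell(w)$.

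We expect the main obstacle to be the bookkeeping in the third step: the spin-to-adjoint shift, the dualisation between chains and cochains, and the sign conventions relating $w\rho-\rho_\frl$ to $w\rho-\rho$. We would check these on $\mathfrak{sl}_2$ first.
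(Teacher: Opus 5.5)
Your route differs from the paper's. The paper does not prove this theorem. It quotes Kostant \cite[Theorem 5.14]{Kostant.I.1961} and only remarks that \eqref{eq:Kostant.formula.n.homology} is the $\rho_{\frn_\C}$-shift of \eqref{eq:decomposition.formula.HD}. Deriving it from Theorem~\ref{HD formula} by Hodge theory, as in \cite{HPRarXiv}, is viable, and Steps 1--2 are essentially right, with two corrections.

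\begin{itemize}
\item $c^\pm$ are not $\frt_\C$-weight components of $c$, since every term has weight $\alpha+\beta+\gamma=0$. They are its components of degree $\pm1$ for the $\frl_\C$-stable grading $\twedge\frs_\C=\twedge\frn_\C\otimes\twedge\theta\frn_\C$, and that is what makes them equivariant.
\item On $S=\twedge\frn_\C$ the operator $c^-$ \emph{lowers} degree. So $\ker L\subset\twedge\frn_\C$ is $\frn_\C$-homology, the dual of $H(\frn_\C,\C)$. Its adjoint highest weights are $w\rho_{\frg_\C}-\rho_{\frl_\C}+\rho_{\frn_\C}$, in degree $|\Delta_w|=\dim\frn_\C-\ell(w)$.
\end{itemize}

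The genuine gap is Step 3, which fails as written. Since $-w\Delta^-=w\Delta^+$, your vector is $\twedge_{\alpha\in\Delta_w}e_{-\alpha}$, the $B$-dual of the homology vector. Dualizing turns highest weight vectors into \emph{lowest} weight vectors. Its weight is therefore the sum over $w\Delta^-\cap(-\Delta(\frn_\C))$, not over $w\Delta^+\cap(-\Delta(\frn_\C))$. It also has $\dim\frn_\C-\ell(w)$ factors, contradicting your degree count. For $\mathfrak{sl}_2$ and $w=1$ the vector is $e_{-\alpha}$, of weight $-\alpha=s\rho-\rho$ and degree $1$, whereas you claim weight $0$ and degree $0$. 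The vector you actually want is $\twedge_{\gamma\in w\Delta^+\cap(-\Delta(\frn_\C))}e_\gamma$. It is not the dual of anything you computed, and nothing in Steps 1--2 shows it is harmonic or $\frl_\C$-highest.

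Either of two fixes closes the gap.

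\begin{itemize}
\item[(a)] Dualize correctly. $E_{w\rho_{\frg_\C}-\rho_{\frl_\C}+\rho_{\frn_\C}}^*$ has highest weight $-\wzerol(w\rho_{\frg_\C}-\rho_{\frl_\C}+\rho_{\frn_\C})=\wzerol w\wzerog\rho_{\frg_\C}-\rho_{\frg_\C}$. The map $w\mapsto\wzerol w\wzerog$ is a bijection of $W^1$; this is the computation in the proof of Corollary~\ref{cor:Lmodule.structure.of.abelian.wedge.n}, read backwards. Moreover $\ell(\wzerol w\wzerog)=\dim\frn_\C-\ell(w)$, which turns the homology degree into the asserted cohomology degree.
\item[(b)] Run Steps 1--2 directly on the cochain space $\twedge\theta\frn_\C\cong\twedge\frn_\C^*$. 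Treat it as the spin module for the Lagrangian $\theta\frn_\C$, with positive system $\Delta^+_{\frl_\C}\cup\Delta(\theta\frn_\C)=\wzerol\wzerog\Delta^+$, whose half-sum is $\rho_{\frl_\C}-\rho_{\frn_\C}$.
\begin{itemize}
\item Now $c^-$ raises degree and is the Chevalley--Eilenberg differential.
\item Theorem~\ref{HD formula} gives spin highest weights $w\rho_{\frg_\C}-\rho_{\frl_\C}$, $w\in W^1$.
\item The adjoint action is the spin action shifted by $-\rho_{\frn_\C}$, the opposite sign from the $\twedge\frn_\C$ case. This yields $w\rho_{\frg_\C}-\rho_{\frg_\C}$ at once.
\item By weight multiplicity one, the highest weight vector is the unique vector of that weight, namely $\twedge_{\gamma\in w\Delta^+\cap(-\Delta(\frn_\C))}e_\gamma$. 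It has $|\Delta^-\cap w\Delta^+|=\ell(w)$ factors.
\end{itemize}
\end{itemize}

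In (b) no replacement of $w$ by $w^{-1}$ is needed, since $\ell(w^{-1})=\ell(w)$.
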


The formula \eqref{eq:Kostant.formula.n.homology} is the shift by the character $\rho_{\frn_\bbC}$ from the formula \eqref{eq:decomposition.formula.HD} describing  $H_D(\bbC)$.  
Since we are in the case where $ \itG_\C/\itL_\C $ is symmetric, the differential $ d_{\wedge} $ vanishes and we get

\begin{cor}\label{cor:Lmodule.structure.of.abelian.wedge.n}
If $ \frn_\C $ is abelian, 
\begin{equation*}
\twedge(\frn_\C) \simeq \bigoplus_{w \in W^1} E_{w \rho_{\frg_\C} - \rho_{\frl_\C} + \rho_{\frn_\C}}.
\end{equation*}
\end{cor}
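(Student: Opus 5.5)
The plan is to read off the corollary from Kostant's theorem (Theorem~\ref{thm:Kostant.n.cohomology}) once we know the Chevalley--Eilenberg differential vanishes. First, assume $\frn_\C$ is abelian. The cochain complex computing $H(\frn_\C,\C)$ is $\twedge(\frn_\C^\ast)$, with differential given by dualizing the bracket. Since $[\frn_\C,\frn_\C]=0$, this differential is identically zero; equivalently we may invoke the equivalence \ref{pointa}$\iff$\ref{pointh} of Theorem~\ref{thm:SSP.SMF.MF.theorem}. Hence $H(\frn_\C,\C)$ equals the full cochain space as an $\Ad(\itL_\C)$-module. The same holds for the homology model $\twedge(\frn_\C)$, where $\partial_\wedge$ also vanishes, which is the identification via $c^\pm$ used in the paper.

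Next, we match conventions. Kostant's formula \eqref{eq:Kostant.formula.n.homology} is stated for the model in which the summand $E_{w\rho_{\frg_\C}-\rho_{\frg_\C}}$ sits in degree $\ell(w)$. Since $\rho_{\frg_\C}=\rho_{\frl_\C}+\rho_{\frn_\C}$, this highest weight equals $w\rho_{\frg_\C}-\rho_{\frl_\C}-\rho_{\frn_\C}$. That is exactly the highest weight of the cohomology on $\twedge(\frn_\C^\ast)\cong\twedge(\theta\frn_\C)$. To pass to $\twedge(\frn_\C)$, the cleanest route is to bypass duality bookkeeping and use the Dirac cohomology picture directly. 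By Lemma~\ref{lem:SpinMF iff symmetri pair} and Corollary~\ref{cor:SpinMF when c is zero}, the spin module satisfies $S=\twedge(\frn_\C)=H_D(\C)$. By Theorem~\ref{HD formula} with $\lambda=0$, this is $\bigoplus_{w\in W^1}E_{w\rho_{\frg_\C}-\rho_{\frl_\C}}$ for the spin action. The adjoint action differs from the spin action by tensoring with the character $\rho_{\frn_\C}$ on $\twedge(\frn_\C)$, as in the proof of \ref{pointb}$\iff$\ref{pointd}. Shifting each highest weight by $+\rho_{\frn_\C}$ then gives the claimed decomposition.

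The main obstacle is this sign/duality bookkeeping: whether the shift between spin and adjoint actions on $\twedge(\frn_\C)$ is $+\rho_{\frn_\C}$ or $-\rho_{\frn_\C}$. I would check it on the extreme summands. For $w=e$ the summand should be the trivial module $\twedge^0\frn_\C$, and the top degree $\twedge^{\dim\frn}\frn_\C$ has weight $2\rho_{\frn_\C}$. For $w=e$ the formula gives $\rho_{\frg_\C}-\rho_{\frl_\C}+\rho_{\frn_\C}=2\rho_{\frn_\C}$, the top exterior power. For the longest $w\in W^1$ one gets $w\rho_{\frg_\C}=\wzerol\wzerog\rho_{\frg_\C}=-\wzerol\rho_{\frg_\C}$, which is $\frl_\C$-dominant. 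The highest weight is then $-\wzerol\rho_{\frg_\C}-\rho_{\frl_\C}+\rho_{\frn_\C}$. Because $\wzerol$ permutes $\Delta(\frn_\C)$ and sends $\rho_{\frl_\C}$ to $-\rho_{\frl_\C}$, we have $\wzerol\rho_{\frg_\C}=-\rho_{\frl_\C}+\rho_{\frn_\C}$, so the weight is $0$, the trivial summand. Both extremes are consistent, which confirms the $+\rho_{\frn_\C}$ shift. Multiplicity-freeness then guarantees the summands are distinct.
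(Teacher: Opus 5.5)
Your argument is correct, but it follows a different route from the paper's.

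\textbf{The paper's route.} The paper stays with Kostant's theorem. Since $d_\wedge=0$, it has $\twedge(\frn_\C^\ast)\simeq\bigoplus_{w\in W^1}E_{w\rho_{\frg_\C}-\rho_{\frg_\C}}$. It then dualizes: $E^\ast_{w\rho_{\frg_\C}-\rho_{\frg_\C}}$ has highest weight $\wzerol w\wzerog\rho_{\frg_\C}-\rho_{\frl_\C}+\rho_{\frn_\C}$, and $w\mapsto\wzerol w\wzerog$ permutes $W^1$.

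\textbf{Your route.} You drop the $\frn$-cohomology altogether, so your first paragraph ends up unused. You obtain $S=H_D(\C)$ from $c=0$ (after \ref{pointa}$\Rightarrow$\ref{pointb}). You read off the spin-action decomposition from Theorem~\ref{HD formula} with $\lambda=0$, and then twist by $\rho_{\frn_\C}$. This is the argument for \ref{pointb}$\Rightarrow$\ref{pointf} with the highest weights tracked.

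\textbf{Comparison.} Your route avoids both Kostant's theorem and the involution of $W^1$. The price is that you must fix the sign of the twist. Your extreme-weight check does fix it: the opposite twist would give the weight $-2\rho_{\frn_\C}$ at $w=\wzerol\wzerog$, which does not occur in $\twedge(\frn_\C)$. More directly, $\theta\frn_\C$ acts by contraction and annihilates $1\in\twedge^0\frn_\C$, so $\alpha(H)\cdot 1=-\rho_{\frn_\C}(H)$ for $H\in\frt_\C$.

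What the paper's route buys is degree information inherited from Kostant. The summand indexed by $w$ in the corollary is dual to Kostant's degree-$\ell(\wzerol w\wzerog)$ piece. Since $\ell(\wzerol w\wzerog)=\dim\frn_\C-\ell(w)$ for $w\in W^1$, that summand lies in $\twedge^{\dim\frn_\C-\ell(w)}\frn_\C$. In the Dirac picture you would see this only by identifying the highest weight vectors $\bigwedge_{\alpha\in\Delta(\frn_\C)\cap w\Delta^+(\frg_\C,\frt_\C)}e_\alpha$.

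\textbf{A slip in your last paragraph.} In the corollary's indexing, $w=e$ gives the top power $\twedge^{\dim\frn}\frn_\C$, as you then compute, and the trivial summand is indexed by $\wzerol\wzerog$. The module for $w=e$ is trivial only in Kostant's cohomological indexing. The two indexings differ exactly by $w\mapsto\wzerol w\wzerog$.
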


\begin{proof}
Since the differential vanishes $ H(\frn_\C, \C) \simeq \twedge(\frn_\C^*) $.  
Let us denote the longest elements in $ W_{\frg_\C} $ and $ W_{\frl_\C} $ by $ \wzerog  $ and $ \wzerol  $ respectively.  
Note that we have 
$\wzerog \rho_{\frg_\bbC}=-\rho_{\frg_\bbC}$, $\wzerol \rho_{\frl_\bbC}=-\rho_{\frl_\bbC}$, and $\wzerol \rho_{\frn_\bbC}=\rho_{\frn_\bbC}$, the last one following from the fact that $\rho_{\frn_\bbC}$ is orthogonal to the roots of $\frl_\bbC$.  
Using these formula, we see the highest weight of the dual representation $ E_{w \rho_{\frg_\C} - \rho_{\frg_\C}}^* $ is 
\begin{align*}
- \wzerol  (w \rho_{\frg_\C} - \rho_{\frg_\C})
&= \wzerol  w \wzerog  \rho_{\frg_\C} + \wzerol  (\rho_{\frl_\C} + \rho_{\frn_\C}) 
\\
&= \wzerol 	w \wzerog  \rho_{\frg_\C} - \rho_{\frl_\C} + \rho_{\frn_\C} .
\end{align*}
Note that $ w \in W^1 $ if and only if $ w $ satisfies the equivalent conditions 
\begin{equation*}
\Delta_{\frl_\C}^+ \subset w \Delta_{\frg_\C}^+ 
\iff 
\Delta_{\frl_\C}^+ \subset \wzerol  w \wzerog  \Delta_{\frg_\C}^+.
\end{equation*}
This means, as $w$ runs over $W^1$ so does $  \wzerol  w \wzerog  $. 
\end{proof}

\begin{rem}
Note that the lowest weight of $ E_{w \rho_{\frg_\C} - \rho_{\frl_\C} + \rho_{\frn_\C}} $ is $ \rho_{\frg_\C} - \sigma \rho_{\frg_\C} $, where 
$ \sigma = \wzerol  \, w\, \wzerog  $, which was used by Kostant (see \cite[Cor.{8.1}]{Kostant.I.1961}). 
\end{rem}

From Lemma~\ref{lemma:qC.isomorphic.to.nC} and Corollary~\ref{cor:Lmodule.structure.of.abelian.wedge.n}, we get

\begin{cor}\label{cor:R-invariants.in.wedge.n}
\begin{equation*}
(\twedge(\frq_\C))^{R} \simeq \bigoplus_{w \in W^1} E_{w \rho_{\frg_\C} - \rho_{\frl_\C} + \rho_{\frn_\C}}^{R} .
\end{equation*}
\end{cor}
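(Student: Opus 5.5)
The plan is to transport the $\frl_\C$-module decomposition of Corollary~\ref{cor:Lmodule.structure.of.abelian.wedge.n} to $\twedge(\frq_\C)$ through an $R$-equivariant isomorphism, and then take $R$-invariants summand by summand.

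First, upgrade Lemma~\ref{lemma:qC.isomorphic.to.nC} from an $\frr_\C$-module isomorphism to an $R$-module isomorphism. The map $\frn \ni x \mapsto x+\theta x \in \frq$ is $\Ad(r)$-equivariant for every $r \in R$, not merely for $r$ in the identity component. Indeed, $R = L^\theta = K\cap L$, and $\Ad(r)$ has three properties:
\begin{enumerate}[label={\upshape(\roman*)}]
\item it commutes with $\theta$, since $r\in K$;
\item it preserves $\frn$, since $r\in L = Z_G(h)$ fixes $h$ and hence preserves each eigenspace of $\ad h$;
\item it preserves $\frq = \frk^{-\sigma}$, because $\frq = (\frn\oplus\theta\frn)^\theta$ and $\Ad(r)$ preserves both $\frn\oplus\theta\frn$ and $\frk$.
\end{enumerate}
So $\Ad(r)(x+\theta x) = \Ad(r)x + \theta\Ad(r)x$. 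Complexifying and taking exterior powers gives an isomorphism $\twedge(\frq_\C)\simeq\twedge(\frn_\C)$ of $R$-modules. This step is where the possible disconnectedness of $R$ has to be handled, and equivariance under the whole group, not only its Lie algebra, is what makes it work.

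Second, restrict the decomposition of Corollary~\ref{cor:Lmodule.structure.of.abelian.wedge.n} to $R$. That corollary is an isomorphism of $\frl_\C$-modules, equivalently of $L_\C$-modules for the connected group $L_\C$. We regard $R\subset L\subset G\subset G_\C$, and $R$ acts on $\twedge(\frn_\C)$ through $\Ad$. I expect this to be the main obstacle, because $R$ need not lie in $L_\C$ when $L$ is disconnected. To handle it, I would interpret each summand $E_\mu$, for $\mu = w\rho_{\frg_\C}-\rho_{\frl_\C}+\rho_{\frn_\C}$, as the corresponding $\frl_\C$-isotypic subspace of $\twedge(\frn_\C)$. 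Since $\twedge(\frn_\C)$ is multiplicity free by Theorem~\ref{thm:SSP.SMF.MF.theorem}\ref{pointd}, each summand is uniquely determined. Any $r\in R$ normalizes $\frl_\C$ and permutes its isotypic components. A permuted component would have to be the summand for a possibly different $w$, but in the same exterior degree $\dim\frn_\C - \ell(w)$, by Theorem~\ref{thm:Kostant.n.cohomology} transported through the dualization.

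What remains to check is that $\Ad(r)$ fixes each summand. If needed, I would first try to argue that $r$ preserves the highest weight line. One could use $L = M\cdot L^0$ together with the fact that elements of $M$ normalize $\frt$ and act on the $\frl_\C$-highest weights compatibly with $W_{\frl_\C}$, so that the highest weight, being distinct and of multiplicity one, is fixed. Alternatively, in the paper's convention $E_\mu$ may simply denote the $R$-stable isotypic summand, in which case the statement is immediate. Granting either route, each $E_\mu$ is $R$-stable.

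Finally, taking $R$-invariants commutes with finite direct sums of $R$-stable subspaces. Hence $\twedge(\frq_\C)^R \simeq \twedge(\frn_\C)^R \simeq \bigoplus_{w\in W^1} E_{w\rho_{\frg_\C}-\rho_{\frl_\C}+\rho_{\frn_\C}}^R$, which is the claimed decomposition.
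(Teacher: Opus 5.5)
Your route is the paper's: the paper simply combines Lemma~\ref{lemma:qC.isomorphic.to.nC} with Corollary~\ref{cor:Lmodule.structure.of.abelian.wedge.n} and takes $R$-invariants. Your first step, checking that $x\mapsto x+\theta x$ is $\Ad(R)$-equivariant and not merely $\frr_\C$-equivariant, correctly supplies a detail the paper leaves implicit.

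The gap is in your second step. You never prove that each summand $E_\mu$ is $R$-stable; you write ``granting either route''. Moreover, the premise that made this look hard is false. The centralizer $Z_{G_\C}(h)$ is the Levi factor of the connected parabolic $P_\C$. As the centralizer of a torus in the connected reductive group $G_\C$, it is connected, and its Lie algebra is $\frl_\C$, so $Z_{G_\C}(h)=L_\C$. Hence
\begin{equation*}
R\subset L=Z_G(h)\subset Z_{G_\C}(h)=L_\C .
\end{equation*}
Every $r\in R$ therefore acts on $\twedge(\frn_\C)$ as an element of $L_\C$. The $L_\C$-isomorphism of Corollary~\ref{cor:Lmodule.structure.of.abelian.wedge.n} is then automatically $R$-equivariant, with $R$ acting on each $E_\mu$ by restriction from $L_\C$; this is also how $E_\mu^R$ in the statement has to be read. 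With this one-line fact inserted, your argument is complete.

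Neither of your suggested routes closes the step as written.
\begin{itemize}
\item The exterior-degree remark cannot separate summands, because many $w\in W^1$ have the same length. For example, in $\twedge(\Mat_{p,q}(\C))$ every $\lambda$ with $|\lambda|=k$ contributes to degree $k$.
\item In route (a), elements of $M$ need not normalize a fixed Cartan subalgebra. More importantly, if $\Ad(r)|_{\frl_\C}$ were outer, it would move highest weights by a diagram automorphism, not by $W_{\frl_\C}$. So what (a) actually needs is that $\Ad(r)|_{\frl_\C}$ is inner, which is exactly $R\subset L_\C$.
\item Route (b) assumes the conclusion.
\end{itemize}
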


Since $ \itL/R $ is symmetric, 
we know $ \dim E_{\lambda}^{R} \leq 1 $.  

Let $\frt_\C$ be a Cartan subalgebra of $\frr_\C$.  
We define the group theoretic Weyl group 
\begin{equation*}\label{gp W gp}
W_R=N_R(\frt_\C)/Z_R(\frt_\C) .
\end{equation*}
Note that if $R$ is connected then $W_R = W_{\frr_\C}$, \cite[Theorem 4.54]{beyond}.

Let 
\begin{equation}\label{eq:def.W1(R)}
W^1(R) = \{ w \in W^1_{\frg_\C,\frl_\C} \mid \dim E_{w \rho_{\frg_\C} - \rho_{\frl_\C} + \rho_{\frn_\C}}^{R} = 1 \} .
\end{equation}
%
We can prove the Cartan-Borel theorem for disconnected $R$, of which we postpone the proof to Section \ref{sec:CartanBorelforDisconnected}. Comparing the Cartan-Borel Theorem~\ref{thm:Cartan-Borel} and Theorem \ref{thm:CartanBorelforDisconnected}, Corollary~\ref{cor:R-invariants.in.wedge.n},
and also Theorem 5.1, we now find

\begin{thm}\label{thm:dim.Poincare.series.of.cohomology}
Let $G, K$ and $R$ be as above, then
\begin{equation*}
\dim H_{dR}(\itG/\itP) = \dim H_{dR}(\itK/R) = 2^{\rank \frk_\C - \rank \frr_\C} |W_{\frk_\C}/W_R| = |W^1(R)| .
\end{equation*}
Moreover, we get the Poincar\'{e} series of the cohomology ring:  
\begin{equation}
P(H_{dR}(\itG/\itP), q) = P(H_{dR}(\itK/R), q) = \sum_{w \in W^1(R)} q^{\ell(w)} .
\end{equation}
Here $ \ell(w) $ denotes the length of the element $ w \in W_{\frg_\C} $.
\end{thm}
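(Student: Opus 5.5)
The plan is to compute $H_{dR}(\itK/R)$ in two ways and compare: once through the skew multiplicity free decomposition of $\twedge(\frn_\C)$, which yields the $|W^1(R)|$ count and the Poincar\'e series, and once through the extended Cartan--Borel formula, which yields the Weyl group count.

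First I would establish the equalities $\dim H_{dR}(\itG/\itP) = \dim H_{dR}(\itK/R) = |W^1(R)|$ together with the Poincar\'e series.
\begin{itemize}
\item Since $\itG/\itP\simeq \itK/R$ is symmetric, Theorem~\ref{thm:Cartan-deRham} gives $H_{dR}(\itK/R)\simeq(\twedge(\frq_\C)^*)^R$. Because $R$ is compact, $\frq_\C^*\simeq\frq_\C$, so this is $(\twedge\frq_\C)^R$.
\item By Lemma~\ref{lemma:qC.isomorphic.to.nC}, $\frq_\C\simeq\frn_\C$ as $\frr_\C$-modules. I need this as $R$-modules, not only $\frr_\C$-modules. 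The map $x\mapsto x+\theta x$ is $R$-equivariant because $R=\itL^\theta$ commutes with $\theta$ and preserves $\frn$. The $\frq_\C$ notation is compatible with the grading, so the isomorphism respects degrees.
\item Corollary~\ref{cor:R-invariants.in.wedge.n} then decomposes the invariants as $\bigoplus_{w\in W^1}E_{\mu_w}^R$, where $\mu_w=w\rho_{\frg_\C}-\rho_{\frl_\C}+\rho_{\frn_\C}$.
\item Since $(\itL,R)$ is a symmetric pair, each $E_{\mu_w}^R$ has dimension at most one. The total dimension is therefore exactly $|W^1(R)|$ by the definition \eqref{eq:def.W1(R)}.
\end{itemize}

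For the Poincar\'e series I need the degree in which $E_{\mu_w}$ sits inside $\twedge(\frn_\C)$.
\begin{itemize}
\item By Theorem~\ref{thm:Kostant.n.cohomology}, $E_{w\rho_{\frg_\C}-\rho_{\frg_\C}}$ lives in degree $\ell(w)$ of $\twedge(\frn_\C^*)$, since the differential is zero.
\item Dualizing, as in the proof of Corollary~\ref{cor:Lmodule.structure.of.abelian.wedge.n}, sends this component to $E_{\mu_{w'}}$ with $w'=\wzerol w\wzerog$, still in degree $\ell(w)$ of $\twedge(\frn_\C)$.
\item I would then check that $\ell(\wzerol w\wzerog)=\ell(w)$ for $w\in W^1$. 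This holds because $\ell(w')=\ell(\wzerol)+\ell(w)... $ needs care, so instead I would argue directly: conjugation by $\wzerog$ followed by multiplication by $\wzerol$ maps the set of roots of $\frn_\C$ inverted by $w$ bijectively onto the set inverted by $w'$.
\item An alternative is to avoid the reindexing altogether. The highest weight vector of $E_{\mu_w}$ is $\bigwedge_{\alpha\in\Delta_w}e_\alpha$, whose degree is $|\Delta_w|=|\{\alpha\in\Delta(\frn_\C): w^{-1}\alpha>0\}|$. One then shows this count equals the length of the appropriate element.
\item The de Rham degree matches the exterior degree through the Cartan--de Rham isomorphism, which completes the Poincar\'e series.
\end{itemize}

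Next I would establish $\dim H_{dR}(\itK/R)=2^{\rank\frk_\C-\rank\frr_\C}|W_{\frk_\C}/W_R|$. This is the extended Cartan--Borel formula, Theorem~\ref{thm:CartanBorelforDisconnected}, with the invariant ring $S(\frt_\C^*)^{W_{\frr_\C}}$ replaced by $S(\frt_\C^*)^{W_R}$. The dimension count then follows as in the connected case:
\begin{itemize}
\item The primitives contribute $2^{\dim\bbP}$, with $\dim\bbP=\rank\frk_\C-\rank\frr_\C$.
\item The coinvariant quotient $S(\frt_\C^*)^{W_R}/\langle S_+(\frt_\C^*)^{W_{\frk_\C}}\rangle$ has dimension $|W_{\frk_\C}|/|W_R|$.
\end{itemize}
The second count comes from the fact that $S(\frt_\C^*)$ is free over the $W_{\frk_\C}$-invariants with regular representation of $W_{\frk_\C}$ as coinvariants. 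Taking $W_R$-invariants gives $\dim\C[W_{\frk_\C}]^{W_R}$. This step requires $W_R\subset W_{\frk_\C}$, acting as a subgroup. When $\frt_\C$ is not a Cartan subalgebra of $\frk_\C$, one works with the restricted-root reflection group, and I would verify there that $W_R$ embeds into it.

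I expect the main obstacle to be the disconnected case: justifying Theorem~\ref{thm:CartanBorelforDisconnected} and the embedding $W_R\hookrightarrow W_{\frk_\C}$. My first attempt would be the following.
\begin{itemize}
\item Compute $H_{dR}(\itK/R)=H_{dR}(\itK/R^0)^{R/R^0}$, using that $R/R^0$ is finite and acts on $\itK/R^0$.
\item Apply the connected Cartan--Borel theorem to $\itK/R^0$.
\item Observe that $R/R^0$ acts trivially on the primitives and acts on $S(\frt_\C^*)^{W_{\frr_\C}}$ through $W_R/W_{\frr_\C}$. Here $R/R^0$ is represented by elements normalizing $\frt_\C$, by a Frattini-type argument.
\item Taking invariants then yields the $W_R$-invariant version.
\end{itemize}
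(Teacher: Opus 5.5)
Your dimension count follows the paper's route: Cartan--de Rham, $\frq_\C\simeq\frn_\C$, Corollary~\ref{cor:R-invariants.in.wedge.n}, $\dim E_\lambda^R\le 1$, and Theorem~\ref{thm:CartanBorelforDisconnected} for the factor $2^{\rank\frk_\C-\rank\frr_\C}|W_{\frk_\C}/W_R|$. Your remark that $x\mapsto x+\theta x$ is $R$-equivariant, not just $\frr_\C$-equivariant, supplies a point the paper leaves implicit.

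The Poincar\'e series step, however, rests on a false identity. For $w\in W^1$ one has $\ell(\wzerol w)=\ell(\wzerol)+\ell(w)$, hence
\[
\ell(\wzerol w\wzerog)=\ell(\wzerog)-\ell(\wzerol)-\ell(w)=\dim\frn_\C-\ell(w),
\]
not $\ell(w)$. Your ``direct'' bijection fails for the same reason. Since $\wzerol$ permutes $\Delta(\frn_\C)$ and $\wzerog$ reverses signs, $\beta\mapsto\wzerol\beta$ carries the roots of $\frn_\C$ \emph{not} inverted by $w$ onto those inverted by $w'=\wzerol w\wzerog$. Your alternative computation confirms this rather than rescuing it. Because $w^{-1}$ maps $\Delta^+_{\frl_\C}$ to positive roots, every positive root made negative by $w^{-1}$ lies in $\Delta(\frn_\C)$, so $|\Delta_w|=\dim\frn_\C-\ell(w)$. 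That is, $E_{\mu_w}$, with $\mu_w=w\rho_{\frg_\C}-\rho_{\frl_\C}+\rho_{\frn_\C}$, sits in degree $\dim\frn_\C-\ell(w)$ of $\twedge(\frn_\C)$.

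So with $W^1(R)$ as in \eqref{eq:def.W1(R)}, your argument actually proves $P(H_{dR}(\itK/R),q)=\sum_{w\in W^1(R)}q^{\dim\frn_\C-\ell(w)}$. The exponent $\ell(w)$ is obtained only by not dualizing. Cartan--de Rham gives $(\twedge\frq_\C^*)^R\simeq(\twedge\frn_\C^*)^R$, and Theorem~\ref{thm:Kostant.n.cohomology} puts $E_{w\rho_{\frg_\C}-\rho_{\frg_\C}}$ in degree $\ell(w)$ there; this is what the paper's one-line appeal to Kostant uses. But then the summation runs over $\{w\in W^1: E^R_{w\rho_{\frg_\C}-\rho_{\frg_\C}}\neq0\}=\wzerol W^1(R)\wzerog$, not over $W^1(R)$. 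The two versions agree only when the Poincar\'e polynomial is palindromic of degree $\dim\frn_\C=\dim\itK/R$, e.g.\ when $\itK/R$ is orientable.

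A concrete counterexample is $G=\Sp_4(\R)$, $\itK/R=\UU(2)/\OO(2)$. The elements of $W^1$ have lengths $0,1,2,3$ with $\mu_w=(3,3),(3,1),(2,0),(0,0)$. Only the even partitions $(2,0)$ and $(0,0)$ carry $\OO(2)$-invariants, so $W^1(R)$ consists of the elements of length $2$ and $3$. Thus $\sum_{w\in W^1(R)}q^{\ell(w)}=q^2+q^3$, while the actual series is $1+q$ (Theorem~\ref{thm:Poincare.series.Sp2n:n.even} with $m=1$).

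Hence the identity you planned to verify cannot be proved. To fix the argument, either index by the Kostant modules $E_{w\rho_{\frg_\C}-\rho_{\frg_\C}}\subset\twedge\frn_\C^*$, or keep $W^1(R)$ and use the exponent $\dim\frn_\C-\ell(w)$.
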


\begin{proof}
The statement for the Poincar\'{e} series comes from Kostant's theorem (Theorem~\ref{thm:Kostant.n.cohomology}) and the definition \eqref{eq:def.W1(R)} of 
$ W^1(R) $.   
\end{proof}

\section{de Rham cohomology for disconnected $R$}\label{sec:CartanBorelforDisconnected}

Let $b_i$ and $d_i$ be dual bases of $\frq_\C$, define $\lambda: S(\frr_\C) \to \twedge(\frq_\C) \cong \twedge (\frk_\C/\frr_\C)^\ast$ by 
\[ 
 \lambda(X) = \dfrac{1}{4} \sum_i [X, b_i] \wedge d_i ,\quad X \in \frr_\C
\]
and extended by multiplication to $S(\frr_\C)$. Denote by $\lambda_{\frr_\C}$ the restriction of $\lambda$ to $S(\frr_\C)^{\frr_C}$ and $\lambda_R$ the restriction of $\lambda$ to $S(\frr_\C)^R$.

\begin{lem}\label{wK/wk}
\begin{enumerate}[label={\makebox[3ex][c]{\upshape(\alph*)}}]
\item\label{lemma:wK-wk:item:a}
The group $W_{R_0}=W_{\frr_\C}$ embeds into $W_{R}$ as a normal subgroup.

\item\label{lemma:wK-wk:item:b}
Any element of $R/R_0$ can be represented by an element of $N_{R}(\frt_\C)$; consequently, $R/R_0 \cong N_{R}(\frt_\C)/N_{R_0}(\frt_\C)$.

\item\label{lemma:wK-wk:item:c}
There is a surjective map $R/R_0\to W_R/W_{R_0}$, with kernel $Z_{R}(\frt_\C)/Z_{R_0}(\frt_\C)$. 
\end{enumerate}
\end{lem}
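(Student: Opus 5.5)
We argue with standard compact Lie group facts, using that $R_0$ is the identity component of the compact group $R$, $\frt$ is the Lie algebra of a maximal torus $T\subset R_0$, and $W_R=N_R(\frt_\C)/Z_R(\frt_\C)$. Throughout we use that $Z_{R_0}(\frt_\C)=T$: centralizers of tori in compact connected groups are connected, and $\frt$ is maximal abelian in $\frr$.

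For \ref{lemma:wK-wk:item:a}, we will show that $Z_{R_0}(\frt_\C)=Z_R(\frt_\C)\cap R_0$ and that the inclusion $N_{R_0}(\frt_\C)\subset N_R(\frt_\C)$ induces a homomorphism $W_{R_0}\to W_R$. Its kernel is $(N_{R_0}(\frt_\C)\cap Z_R(\frt_\C))/Z_{R_0}(\frt_\C)$, and this is trivial because $N_{R_0}\cap Z_R=Z_R\cap R_0=Z_{R_0}$. The identification $W_{R_0}=W_{\frr_\C}$ is the cited \cite[Theorem 4.54]{beyond}. For normality, note that $N_{R_0}(\frt_\C)=N_R(\frt_\C)\cap R_0$ and that $R_0$ is normal in $R$. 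Hence $N_{R_0}(\frt_\C)$ is normal in $N_R(\frt_\C)$, and its image $W_{R_0}$ is normal in $W_R$.

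For \ref{lemma:wK-wk:item:b} we use a Frattini-type argument. Take $r\in R$. Since $R_0$ is normal in $R$, $\Ad(r)\frt$ is the Lie algebra of a maximal torus $rTr^{-1}$ of $R_0$. By conjugacy of maximal tori in the compact connected group $R_0$, there is $r_0\in R_0$ with $r_0 r T r^{-1}r_0^{-1}=T$. Then $r_0r\in N_R(\frt_\C)$ and it lies in the coset $rR_0=R_0 r$. So $N_R(\frt_\C)\to R/R_0$ is surjective with kernel $N_R(\frt_\C)\cap R_0=N_{R_0}(\frt_\C)$, which gives $R/R_0\cong N_R(\frt_\C)/N_{R_0}(\frt_\C)$.

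For \ref{lemma:wK-wk:item:c}, we compose the isomorphism of \ref{lemma:wK-wk:item:b} with the surjection
\[
N_R(\frt_\C)/N_{R_0}(\frt_\C)\to N_R(\frt_\C)/\bigl(N_{R_0}(\frt_\C)Z_R(\frt_\C)\bigr)\cong W_R/W_{R_0}.
\]
The last isomorphism is the third isomorphism theorem, using that $W_{R_0}$ is the image of $N_{R_0}$ in $W_R$. The kernel is
\[
N_{R_0}Z_R/N_{R_0}\cong Z_R/(Z_R\cap N_{R_0})=Z_R(\frt_\C)/Z_{R_0}(\frt_\C),
\]
where the last equality again uses $Z_R\cap R_0=Z_{R_0}$. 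This kernel is a subgroup of $R/R_0$ via \ref{lemma:wK-wk:item:b}.

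The main obstacle is modest. One must check that $N_{R_0}(\frt_\C)Z_R(\frt_\C)$ is a subgroup; it is, since $Z_R$ is normal in $N_R$. One must also make the identifications of \ref{lemma:wK-wk:item:b} carefully, so that the kernel is described inside $R/R_0$ rather than only abstractly. The essential input throughout is conjugacy of maximal tori in $R_0$.
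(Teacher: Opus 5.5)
Your proof is correct and follows the paper's argument: for (a), $N_{R_0}(\frt_\C)$ is normal in $N_R(\frt_\C)$ and $Z_{R_0}(\frt_\C)=Z_R(\frt_\C)\cap N_{R_0}(\frt_\C)$; for (b), a Frattini-type conjugacy argument; for (c), the isomorphism of (b) composed with the quotient onto $W_R/W_{R_0}$. You invoke conjugacy of maximal tori inside the compact group $R_0$ rather than conjugacy of Cartan subalgebras of $\frr_\C$, and you compute the kernel $Z_R(\frt_\C)/Z_{R_0}(\frt_\C)$ explicitly via the second isomorphism theorem, which makes precise two points the paper leaves implicit.
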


\begin{proof}
\ref{lemma:wK-wk:item:a}\ 
The normalizer $N_{R_0}(\frt_\C)$ is a normal subgroup of $N_{R}(\frt_\C)$ and since
$Z_{R_0}(\frt_\C)=Z_R(\frt_\C)\cap N_{R_0}(\frt_\C)$, $W_{R_0}$ embeds into $W_R$ as a normal subgroup. 

\ref{lemma:wK-wk:item:b}\ 
Let $r\in R$; then $\Ad(r)\frt_\C$ is a Cartan subalgebra of $\frr_\C$. By conjugacy of Cartan subalgebras, there is an $r_0\in R_0$ such that $\Ad(r)\frt_\C=\Ad(r_0)\frt_\C$. Thus $r_0^{-1}r$ normalizes $\frt_\C$, and it represents the same element of $R/R_0$ as $r$. 

Now we consider the obvious map $N_R(\frt_\C)\hookrightarrow R\twoheadrightarrow R/R_0$; it is onto by the above, and the kernel is $N_ R(\frt_\C)\cap R_0=N_{R_0}(\frt_\C)$.

\ref{lemma:wK-wk:item:c}\ 
Inverting the isomorphism obtained in (b), we get
\begin{multline*}
R/R_0\overset{\cong}{\rightarrow} N_R(\frt_C)/N_{R_0}(\frt_\C)\cong \left(N_R(\frt_C)/Z_{R_0}(\frt_\C)\right)/\left(N_{R_0}(\frt_\C)/Z_{R_0}(\frt_\C)\right)\twoheadrightarrow\\ 
\twoheadrightarrow\left(N_R(\frt_\C)/Z_R(\frt_\C)\right)/\left(N_{R_0}(\frt_\C)/Z_{R_0}(\frt)\right)=W_R/W_{R_0}. 
\end{multline*}
\end{proof}

We can now obtain a version of the standard Harish-Chandra isomorphism for disconnected $R$.

\begin{cor}
\label{HC R}
With notation as above, the (usual) Harish-Chandra map $\gamma_0:Z(\frr_\C)=U(\frr_\C)^{R_0}\to S(\frt_\C^\ast)^{W_{\frr_\C}}=S(\frt_\C^\ast)^{W_{R_0}}$ restricts to an isomorphism
\[
\gamma:U(\frr_\C)^{R}\to S(\frt_\C^\ast)^{W_{R}},
\]
similarly 
\[ 
S(\frr_\C)^{R}\cong S(\frt_\C^\ast)^{W_{R}}.
\] 
\end{cor}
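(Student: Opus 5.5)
The plan is to take $R/R_0$-invariants on both sides of the usual Harish-Chandra isomorphism $\gamma_0$ and match the two finite group actions using Lemma~\ref{wK/wk}. First, since $R$ normalizes $R_0$, the group $R$ acts on $Z(\frr_\C)=U(\frr_\C)^{R_0}$ by $\Ad$, and $R_0$ acts trivially. Hence this is an action of the finite group $R/R_0$, and
\[
U(\frr_\C)^R=\bigl(U(\frr_\C)^{R_0}\bigr)^{R/R_0}.
\]
On the other side, $W_R$ acts on $S(\frt_\C^\ast)$ and the normal subgroup $W_{R_0}$ acts trivially on $S(\frt_\C^\ast)^{W_{R_0}}$. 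So $W_R/W_{R_0}$ acts there, with
\[
S(\frt_\C^\ast)^{W_R}=\bigl(S(\frt_\C^\ast)^{W_{R_0}}\bigr)^{W_R/W_{R_0}}.
\]
It therefore suffices to show that $\gamma_0$ intertwines the $R/R_0$-action on the left with the $W_R/W_{R_0}$-action on the right through the surjection of Lemma~\ref{wK/wk}\ref{lemma:wK-wk:item:c}. Because that map is onto, the invariants then correspond bijectively under $\gamma_0$.

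For the equivariance, by Lemma~\ref{wK/wk}\ref{lemma:wK-wk:item:b} every class in $R/R_0$ has a representative $n\in N_R(\frt_\C)$. The delicate point, and the main obstacle, is that $\Ad(n)$ normalizes $\frt_\C$ but need not preserve the chosen positive system $\Delta^+(\frr_\C,\frt_\C)$, while $\gamma_0$ is defined through it (projection along $\frn^-_{\frr}U+U\frn^+_{\frr}$ followed by the $\rho_{\frr}$-shift). To handle this, I would compose $n$ with an element $n_0\in N_{R_0}(\frt_\C)$, which is possible since $W_{R_0}$ acts simply transitively on positive systems. Then $n'=n_0n$ preserves $\Delta^+(\frr_\C,\frt_\C)$, hence also $\frn^\pm_{\frr}$ and $\rho_{\frr}$.

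For $z\in Z(\frr_\C)$, write $z\equiv u \bmod \frn^-_\frr U(\frr_\C)+U(\frr_\C)\frn^+_\frr$ with $u\in U(\frt_\C)=S(\frt_\C)$. Applying $\Ad(n')$ gives $\Ad(n')z\equiv \Ad(n')u$ modulo the same ideal. The $\rho$-shift commutes with $\Ad(n')$ because $n'\rho_\frr=\rho_\frr$. This yields
\[
\gamma_0(\Ad(n')z)=\bar n'\cdot\gamma_0(z),
\]
where $\bar n'$ is the image in $W_R$. Moreover, $\Ad(n_0)$ acts trivially on $Z(\frr_\C)$ and $\bar n_0\in W_{R_0}$ acts trivially on the image of $\gamma_0$. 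So the same identity holds with $n$ in place of $n'$, and the class of $\bar n$ in $W_R/W_{R_0}$ is exactly the image of $nR_0$ under Lemma~\ref{wK/wk}\ref{lemma:wK-wk:item:c}. Elements of $Z_R(\frt_\C)$ act trivially on $\frt_\C$, consistent with their lying in the kernel. Taking invariants then finishes the proof of the first isomorphism.

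For the symmetric algebra statement I would run the identical argument with the Chevalley restriction isomorphism in place of $\gamma_0$. Identifying $\frr_\C\cong\frr_\C^\ast$ via the invariant form, restriction to $\frt_\C$ gives $S(\frr_\C)^{R_0}=S(\frr_\C^\ast)^{\frr_\C}\cong S(\frt_\C^\ast)^{W_{R_0}}$. Restriction is manifestly equivariant for $N_R(\frt_\C)$, with no $\rho$-shift or choice of positive system needed. Taking invariants under $R/R_0$, equivalently $W_R/W_{R_0}$, then gives $S(\frr_\C)^R\cong S(\frt_\C^\ast)^{W_R}$.
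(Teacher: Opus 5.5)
Your proposal is correct and follows essentially the same route as the paper: $\gamma_0$ is equivariant for $N_R(\frt_\C)$, and since $N_R(\frt_\C)$ and $R_0$ together generate $R$ (Lemma~\ref{wK/wk}), the $R$-invariants on one side match the $W_R$-invariants on the other, with the symmetric-algebra case handled the same way. Your adjustment by an element $n_0\in N_{R_0}(\frt_\C)$ to restore the positive system just supplies the detail behind the paper's one-line assertion that $\gamma_0$ intertwines the two actions.
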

\pf 
The group $N_{R}(\frt_\C)$ acts on $U(\frr_\C)^{R_0}$ and $S(\frt_\C^\ast)^{W_{R_0}}$. The map $\gamma_0$ intertwines these actions and descends to an isomorphism 
\[
\gamma: U(\frr_\C)^{R'}\to S(\frt_\C^\ast)^{W_{R}},
\]
where $R'$ is the subgroup of $R$ generated by $R_0$ and $N_{R}(\frt_\C)$. The group $R'$ is however equal to $R$ by Lemma \ref{wK/wk} (b).
\epf

\begin{thm}\label{thm:CartanBorelforDisconnected}
    Let $K$ be a compact connected Lie group, and let $R$ be a symmetric subgroup of $K$, not  necessarily connected then
\begin{equation*}
H_{dR}(\itK/R) \simeq \twedge(\bbP_{\frk_\C/\frr_\C}) \otimes \Bigl( S(\frt_\C^*)^{W_{R}}/ \langle S_+(\frt_\C^*)^{W_{\frk_\C}} \rangle \Bigr).
\end{equation*}
\end{thm}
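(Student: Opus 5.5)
The plan is to reduce to the connected case $R_0$ and then take invariants under the finite group $\Gamma = R/R_0$. By Theorem~\ref{thm:Cartan-deRham} (which holds for arbitrary closed $R$ with $K/R$ symmetric) we have $H_{dR}(K/R)\simeq(\twedge(\frq_\C))^R = \bigl((\twedge(\frq_\C))^{R_0}\bigr)^{\Gamma}$, and likewise $H_{dR}(K/R_0)\simeq(\twedge(\frq_\C))^{R_0}$. Since $(\frk,\frr)$ is symmetric it is a Cartan pair, so Theorem~\ref{thm:Cartan-Borel} gives an isomorphism
\[
(\twedge(\frq_\C))^{R_0}\simeq \twedge(\bbP_{\frk_\C/\frr_\C})\otimes\Bigl(S(\frt_\C^*)^{W_{\frr_\C}}/\langle S_+(\frt_\C^*)^{W_{\frk_\C}}\rangle\Bigr).
\]
The task is then to show that this isomorphism is $\Gamma$-equivariant for suitable $\Gamma$-actions on the right-hand side, and to compute the $\Gamma$-invariants.

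First I will make the Cartan--Borel isomorphism explicit. The second factor is realised as the image of $\lambda_{\frr_\C}\colon S(\frr_\C)^{\frr_\C}\to\twedge(\frq_\C)^{R_0}$, a Chern--Weil type map whose kernel is the ideal generated by the restrictions of $S_+(\frk_\C)^{\frk_\C}$. It is identified with $S(\frt_\C^*)^{W_{\frr_\C}}/\langle S_+^{W_{\frk_\C}}\rangle$ through the Harish-Chandra isomorphism $\gamma_0$ and Chevalley restriction. The first factor is spanned by the relative transgressions $t_{\frk_\C/\frr_\C}(\mathcal P_{\frk_\C})$. The key equivariance points are as follows.

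\begin{enumerate}
\item The primitives $t_{\frk_\C/\frr_\C}(p)$ come from $\frk_\C$-invariant polynomials $p$. Since $K$ is connected, $\Ad(k)$ for $k\in R\subset K$ fixes every $p$, and the transgression is natural for $\Ad(R)$, so $\Gamma$ acts trivially on $\bbP_{\frk_\C/\frr_\C}$.
\item The map $\lambda$ is $R$-equivariant, because it is built from the bracket and $B$, both of which are $\Ad(R)$-invariant. Hence $\lambda(S(\frr_\C)^R)\subset(\twedge\frq_\C)^R$. Moreover, $\Gamma$ acts on $S(\frr_\C)^{\frr_\C}$, and via Lemma~\ref{wK/wk} it acts on $S(\frt_\C^*)^{W_{R_0}}$ through the quotient $W_R/W_{R_0}$, compatibly with $\gamma_0$. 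Corollary~\ref{HC R} identifies the $\Gamma$-invariants with $S(\frt_\C^*)^{W_R}$.
\end{enumerate}

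Next I take $\Gamma$-invariants. Since $\Gamma$ is finite and we work in characteristic $0$, taking invariants is exact. The ideal $\langle S_+(\frt_\C^*)^{W_{\frk_\C}}\rangle$ is $\Gamma$-stable, since $W_R\subset W_{\frk_\C}$ in the restricted sense (elements of $N_R(\frt_\C)\subset K$ act on $\frt_\C$ through restricted $\frk_\C$-Weyl elements and preserve $W_{\frk_\C}$-invariants). Therefore
\[
\Bigl(S(\frt_\C^*)^{W_{\frr_\C}}/\langle S_+^{W_{\frk_\C}}\rangle\Bigr)^{\Gamma} = S(\frt_\C^*)^{W_{R}}\big/\bigl(\langle S_+^{W_{\frk_\C}}\rangle\cap S^{W_R}\bigr).
\]
Using the Reynolds operator (averaging over $W_R/W_{R_0}$), the intersection equals the ideal of $S(\frt_\C^*)^{W_R}$ generated by $S_+^{W_{\frk_\C}}$, because the generators are $W_R$-invariant. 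Tensoring with the trivial $\Gamma$-module $\twedge(\bbP_{\frk_\C/\frr_\C})$ then gives the stated formula.

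The main obstacle is the step showing that $\Gamma$ acts trivially on the primitive factor and that the Cartan--Borel isomorphism intertwines the $\Gamma$-actions. This matters because the isomorphism is an algebra map $\twedge(\bbP)\otimes \operatorname{im}\lambda\to(\twedge\frq_\C)^{R_0}$, which requires compatibility with a non-canonical choice of complement. I would handle this by working entirely inside $(\twedge\frq_\C)^{R_0}$. There, $\operatorname{im}\lambda_{\frr_\C}$ is canonically $\Gamma$-stable, and I will choose the relative primitives as transgressions of $\Ad(K)$-invariant polynomials, which are automatically $R$-invariant. The product map is then $\Gamma$-equivariant by construction.
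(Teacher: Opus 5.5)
Your proposal is correct and follows essentially the same route as the paper. Both arguments use Cartan--de Rham for $R$ and $R_0$ to get $H_{dR}(K/R)\simeq H_{dR}(K/R_0)^{R}$, then Cartan--Borel for $R_0$. Both observe that the relative primitives are $R$-invariant because they come from $K$-invariant data, and both use Corollary~\ref{HC R} to convert $R/R_0$-invariance into $W_R$-invariance. Both finish with the Reynolds operator, showing that the $W_R$-invariants of the ideal generated by $S_+(\frt_\C^*)^{W_{\frk_\C}}$ in $S(\frt_\C^*)^{W_{\frr_\C}}$ form exactly the ideal it generates in $S(\frt_\C^*)^{W_R}$.

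The differences are cosmetic. For the $W_R$-stability of that ideal, you argue via $K$-invariance of the restricted polynomials, while the paper cites Knapp's identification $W_{\frk_\C}=N_K(\frt_\C)/Z_K(\frt_\C)$. You also state explicitly the exactness of finite-group invariants and the $R$-equivariance of the product map, which the paper uses only implicitly.
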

\begin{proof}
    Theorem \ref{thm:Cartan-Borel} applied to $K/R_0$ states that 
    \begin{equation*}
H_{dR}(\itK/R_0) \simeq \twedge(\bbP_{\frk_\C/\frr_\C}) \otimes \Bigl( S(\frt_\C^*)^{W_{\frr_\C}}/ \langle S_+(\frt_\C^*)^{W_{\frk_\C}} \rangle \Bigr)
\end{equation*}
As shown in \cite{CNP.arxiv2023} and \cite{CGKP.2025}, $\lambda_{\frr_\C}$ gives the characteristic subalgebra
\[
\im \lambda_{\frr_\C} \cong S(\frt_\C^*)^{W_{\frr_\C}}/ \langle S_+(\frt_\C^\ast)^{W_{\frk_\C}} \rangle.
\]
Furthermore, Theorem \ref{thm:Cartan-deRham} applied to both $K/R$ and $K/R_0$ shows that 

\[ H_{dR}(\itK/R) \simeq (\twedge(\frk_\C/\frr_\C)^\ast)^R = ((\twedge(\frk_\C/\frr_\C)^\ast)^{R_0})^R = H_{dR}(K/R_0)^R.\]
Lemma 4.34 in \cite{CGKP.2025} shows that the relative primitives $\bbP_{\frk_\C/\frr_\C}$ are exactly the restriction of the absolute primitives $\bbP_{\frk_\C}$ to the space $\frk_\C/\frr_\C$. The absolute primitives are $K$ invariant and thus $R$ invariant. Since $\frr_\C$ is a $R$ submodule, then restriction to $\frk_\C/\frr_\C$ is an $R$ module homomorphism. Hence the relative primitives  $\bbP_{\frk_\C/\frr_\C}$ and thus $\twedge(\bbP_{\frk_\C/\frr_\C})$, are $R$ invariant. Therefore
\[
H_{dR}(\itK/R) \simeq  H_{dR}(K/R_0)^R \simeq \twedge(\bbP_{\frk_\C/\frr_\C}) \otimes  (\im \lambda_{\frr_\C})^R.
\]

We are left to understand the $R$ invariants of $\im \lambda_{\frr_\C}$. In fact $H_{dR}(K/R_0)$ is already $R_0$ invariant, thus we need to take $R/R_0$ invariance, which under the Harish-Chandra isomorphism is given by $W_R/W_{\frr_\C}$ invariance.

By Corollary \ref{cor:R-invariants.in.wedge.n} we know that under the symmetric analogue of the Harish-Chandra isomorphism that the $R$ invariants of $S(\frr_\C)$ correspond to the $W_R$ invariants of $S(\frt_\C^\ast)$. We have the following exact sequence of $R$ modules on the top row and $W_R$ modules on the bottom row:
\begin{equation*}
\begin{tikzcd}
    & & S(\frr_\C)^{\frr_\C} \arrow[d, "\cong"] \arrow[r,"\lambda_{\frr_\C}"]  & \im \lambda_{\frr_\C} \arrow[r] & 0\\
0  \arrow[r] & \langle S_+(\frt_\C^*)^{W_{\frk_\C}}\rangle \arrow[r, hook] &  S(\frt_\C^*)^{W_{\frr_\C}}    & 
\end{tikzcd}
\end{equation*}
Knapp \cite[Main Theorem]{Knapp.1975} shows that  $W_{\frk_\C} = N_K(\frt_\C) /Z_K(\frt_\C)$, thus $W_{\frk_\C} \supset W_R$ and $S_+(\frt_\C^*)^{W_{\frk_\C}}$ is $W_R$ invariant. To distinguish between two ideals, let $I_+({W_{\frr_\C}})$ and $I_+({W_R})$ denote the ideal generated by $S_+(\frt_\C^\ast)^{W_{\frk_\C}}  $ in $S(\frt_\C^\ast)^{W_{\frr_\C}}$ and $S(\frt_\C^\ast)^{W_R}$ respectively.

We claim that 
\[
\left(I_+({W_{\frr_\C}}) \right)^{W_R} =I_+({W_R}).
\]
Consider a general element $x = \sum a_i b_i $ in $I_+({W_{\frr_\C}})$ with $a_i \in S(\frt_\C^\ast)^{W_{\frr_\C}}$ and $b_i \in  S_+(\frt_\C^\ast)^{W_{\frk_\C}}$. Since every $b_i$ is $W_R$ invariant the forward containment is clear. Suppose that $x$ is $W_R$ invariant, then the Reynolds operator $\pi^{W_R}(x) = \frac{1}{|W_R|} \sum_{w \in W_R} w (x)$ fixes $x$. Thus 
\[
x = \pi^{W_R}(x) = \pi^{W_R} (\sum a_i b_i ) = \sum \pi^{W_R} (a_i) b_i,
\] with the last equality following from the fact that $\pi^{W_R}$ is $S(\frt_\C^\ast)^{W_R}$ linear.  Since $\pi^{W_R}(a_i) \in S(\frt_\C^\ast)^{W_R}$ then $x \in  I_+({W_R})$. Thus proving our claim.  

Taking $R$ and $W_R$ invariance of the short exact sequence for $\lambda_{\frr_\C}$ we then find 
\begin{equation*}
\begin{tikzcd}
      & & S(\frr_\C)^R  \arrow[r,"\lambda_R"] \arrow[d,"\cong"]  & \im (\lambda_{\frr_\C})^R \arrow[r] & 0 \\
       0 \arrow[r] & I_+(W_R) \arrow[r] & S(\frt_\C^\ast)^{W_R}    
\end{tikzcd}
\end{equation*}
which gives the description of $(\im \lambda_{\frr_\C})^R$ as required.

\end{proof}

\begin{rem}
    It is possible to prove Theorem \ref{thm:CartanBorelforDisconnected} for a general closed subgroup $R \subset K$, under the assumption that $(\frk_\C,\frr_\C)$ is a Cartan pair. This proof heavily uses the results of \cite{CCC.III.1976}. Since we only require this theorem for symmetric pairs we omit the details.
\end{rem}

\section{Examples}\label{section:examples}

\subsection{Case of $ \itG = \UU(n, n) $}

Let us consider $ \itG = \UU(n, n) $ over $ \C $ (indefinite unitary group). 
From the table in \cite[Table{1}]{CNP.arxiv2023}, we know 
$ \itP = \stab(\C^n) $ (the stabilizer of a Lagrangian subspace $ \C^n $), 
$ \itL = \GL_n(\C) $, $ \itK = \UU(n)^2 $ and $ R = \Delta \UU(n) $ (diagonal subgroup).  
Thus $ (\itG_\C, \itL_\C) = (\GL_{2n}(\C), \GL_n(\C)^2) $ is a symmetric pair, as well as 
$ (\itG, \itL) = (\UU(n, n), \GL_n(\C))$, $ (\itK, R) = (\UU(n)^2, \Delta \UU(n)) $ over $ \R $.  

It is well known that 
\begin{equation}\label{eq:decomposition.wedge.Matn}
\twedge(\frn_\C) \simeq \twedge(\Mat_n(\C)) \simeq \bigoplus_{\lambda} V_{\lambda} \boxtimes V_{\transpose{\lambda}} 
\end{equation}
as an $ \itL_\C $-module, where $ \lambda $ moves over the Young diagrams contained in the $ n \times n $-square shape and $\transpose{\lambda}$ is the transpose of $\lambda$. 

$ \itK $ and $ R $ do not have the same rank; the ranks are $ 2n $ and $ n $.  
So we use the diagonal torus $ T \subset R $ to define the Weyl groups, and 
we get $ W_{\frk_\C} = W_{\frr_\C} = S_n $, the symmetric group of order $ n $.  
Thus the dimension of the de Rham cohomology is given by 
\begin{equation*}
\dim H_{dR}(\itG/\itP) = \dim H_{dR}(\itK/R) = 2^{\rank \frk_\C - \rank \frr_\C} = 2^n.  
\end{equation*}
Notice that $ \itG/\itP $ is the Grassmannian of Hermitian Lagrangian subspaces in $ \C^{n|n} $ and 
$ \itK/R \simeq \UU(n) $, denoted by $ \HLGr(\C^{n|n}) $.  
To see this, we just need to note two facts: 
$ G = \UU(n, n) $ acts transitively on the space of Hermitian Lagrangian subspaces and $ P $ stabilizes the base Lagrangian $ \C^n \oplus \{ 0 \} $.

If we take $ R $-invariants in the formula \eqref{eq:decomposition.wedge.Matn}, we get
\begin{equation*}
\twedge(\frn_\C)^R \simeq \twedge(\Mat_n(\C))^{\Delta \GL_n(\C)} \simeq \bigoplus_{\lambda} (V_{\lambda} \boxtimes V_{\transpose{\lambda}})^{\Delta \GL_n(\C)}.
\end{equation*}
In the last formula, since $ g \in \Delta \GL_n(\C) $ acts by $ (g, \transpose{g}^{-1}) $, 
the term for $ \lambda $ survives only for $ \lambda $ satisfying $ \lambda = \transpose{\lambda} $, i.e., 
Young diagrams in $ n \times n $-square, which are symmetric along the diagonal. 
Each of such $ \lambda $ contributes $ 1 $ dimension to the invariants. 
Thus we get the formula
\begin{equation*}
|\{\lambda \mid \lambda \subset [n]\times [n], \lambda = \transpose{\lambda} \}| = 2^n.
\end{equation*}
(one can deduce this from combinatorial arguments, but here we get it from the dimension formula of cohomologies).  
The Poincar\'{e} series is given by 
\begin{equation*}
P(\operatorname{HLGr}(\C^{n|n}), q) = P(\UU(n), q) = \prod_{k = 0}^n (1 + q^{2 k - 1}).
\end{equation*}
%
To see this, a symmetric Young diagram $ \lambda \subset [n]\times [n] $ is the ``nest'' of symmetric hooks 
$ (k, 1^{k- 1}) $ for $ 1 \leq k \leq n $ of size $ 2 k - 1 $.  
The degree should be counted as the number of cells (boxes) in the symmetric Young diagram.
The nest allows any combination so that there are choices of $ q^{2 k - 1} $ for each $ k $.

In this case, the formula above also gives the Poincare series 
$ P(\twedge(\bbP_{\frr_\C}), q) $ 
of the exterior algebra of the primitives. 

\subsection{Case of $ G = \Sp_{2n}(\R) $}\label{subsec:example.sp2nR}

Let us consider the case where $ \itG = \Sp_{2n}(\R) $ and $ \itP $ is the Siegel parabolic.  
In this case, $ \itL = \GL_n(\R) $ is a Levi component of $ \itP $, and the nilpotent radical is given by $ \frn = \Sym_n(\R) $.  
A maximal compact subgroup of $ \itG $ is $ \itK = \UU(n) $ and $ R = \itL \cap \itK = \OO(n) $.  Thus we have an isomorphism $ \itG/ \itP 
= \Sp_{2n}(\R)/ P \simeq \UU(n)/ \OO(n) = \itK/ R $.  
Note that $ \itL $ and $ R $ are not connected, but have two connected components, thus the classical Cartan-Borel theorem does not apply directly however we can use Theorem \ref{thm:CartanBorelforDisconnected}.

First, we examine the $ \itL_{\C} $-module structure on $ \twedge(\frn_\C) $ and determine $ R $-invariants as well as $ R_0 $-invariants.  

The adjoint action of the Levi subgroup $ \itL_\C = \GL_n(\C) $ on $ \frn_\C = \Sym_n(\C) $ is the usual one, i.e., 
$ g \in \GL_n(\C) $ acts on $ z \in \Sym_n(\C) $ as $ g \cdot z = g z \transpose{g} $.  This representation is irreducible.  
Let us explain which irreducible representations $ V_{\lambda} $ of $ \GL_n(\C) $ appear in $ \twedge(\Sym_n(\C)) $.  

Let us consider a hook $ h_k = (k + 1, 1^{k - 1}) $ for $ k \geq 1 $.  
Then $ \lambda $ must be a Young diagram obtained by nesting $ h_k $'s according to \cite[Theorem 4.4.2]{Howe.SchurLecture.1995}.  

(Nesting means to assemble the hooks in such a way that the vertices of the hooks line along the diagonals, and the resulting assembly makes a diagram.   In the present case,  one can only use $ h_k $ at most once for each $ k $.)
 
We reinterpret Howe's result in such a way where the description is more convenient for us.    
In Frobenius notation for Young diagrams, the hook is denoted by $ h_k = [(k); (k-1)] $.  
For example the nest of $ h_4 $ and $ h_2 $ gives $ \lambda = (5,4,2,1) $, which is written as 
$ \lambda = [(4,2); (3,1)] $ in Frobenius notation.  (For Frobenius notation, see \cite[Chapter~I, \S1, p.~3]{Macdonald.1995}.)
A partition $ \lambda $ obtained in that way always has even size because $ |h_k| = 2 k $ is even. 

\begin{lem}[{\cite[Theorem 4.4.2]{Howe.SchurLecture.1995}}]\label{lem:Sp2n.wedgeSym_n}
An irreducible representation $ V_{\lambda} $ of $ \GL_n(\C) $ appears in $ \twedge(\Sym_n(\C)) $ if and only if 
$ \lambda = [\mu; \eta] $ satisfies the conditions:
\begin{enumerate}[label={\upshape(\arabic*)}]
\item\label{lem:Sp2n.wedgeSym_n:item:1}
$ \mu $ is a strict partition (partition with mutually distinct parts) of length 
$ \ell := \ell(\mu) \leq n $ and $ \mu_1 \leq n $, 
\item\label{lem:Sp2n.wedgeSym_n:item:2}
$ \eta = \mu - (1^{\ell}) $, i.e., $\eta$ is $\mu$ with first column deleted.
\end{enumerate}

If $ V_{\lambda} $ appears in $ \twedge(\Sym_n(\C)) $, it is contained in the $ p $-th graded piece $ \twedge^p(\Sym_n(\C)) $ with $ p = |\lambda|/2 = |\mu| $, with multiplicity one. 
\end{lem}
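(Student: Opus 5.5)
We plan to read the lemma off Corollary~\ref{cor:Lmodule.structure.of.abelian.wedge.n} together with Theorem~\ref{thm:SSP.SMF.MF.theorem}. Since $\frn_\C=\Sym_n(\C)$ is abelian, $\twedge(\Sym_n(\C))\simeq\bigoplus_{w\in W^1}E_{w\rho_{\frg_\C}-\rho_{\frl_\C}+\rho_{\frn_\C}}$, and it is multiplicity free by \ref{pointd}. So the work is to compute the weights $w\rho_{\frg_\C}-\rho_{\frl_\C}+\rho_{\frn_\C}$ for $w\in W^1$ and to rewrite them in Frobenius notation.

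Take the standard Cartan subalgebra of $\frg_\C=\mathfrak{sp}_{2n}(\C)$ with roots $\pm e_i\pm e_j$ and $\pm 2e_i$. The roots of $\frl_\C=\mathfrak{gl}_n$ are $e_i-e_j$, and the roots of $\frn_\C$ are $e_i+e_j$ ($i<j$) and $2e_i$. Then
\[
\rho_{\frg_\C}=(n,n-1,\dots,1),\qquad \rho_{\frl_\C}=\bigl(\tfrac{n-1}{2},\dots,-\tfrac{n-1}{2}\bigr),\qquad \rho_{\frn_\C}=\tfrac{n+1}{2}(1,\dots,1).
\]
Hence $-\rho_{\frl_\C}+\rho_{\frn_\C}=(1,2,\dots,n)$. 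An element $w\in W^1$ is determined by $w\rho_{\frg_\C}$, which must be a signed permutation of $(n,\dots,1)$ that is $\frl_\C$-dominant, i.e.\ strictly decreasing. Therefore
\[
w\rho_{\frg_\C}=(\mu_1,\dots,\mu_\ell,-\nu_1,\dots,-\nu_{n-\ell}),
\]
where $\mu_1>\dots>\mu_\ell$ and $\nu_1<\dots<\nu_{n-\ell}$ partition $\{1,\dots,n\}$. This sets up a bijection between $W^1$ and strict partitions $\mu$ with $\mu_1\le n$, which matches condition \ref{lem:Sp2n.wedgeSym_n:item:1}.

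The highest weight is $\lambda=w\rho_{\frg_\C}+(1,\dots,n)$. Its entries are $\lambda_i=\mu_i+i$ for $i\le\ell$ and $\lambda_{\ell+k}=\ell+k-\nu_k$. These last entries are nonnegative and weakly decreasing, since $\nu_k\ge k$ and $\nu_{k+1}\ge\nu_k+1$. Thus $\lambda$ is a partition whose Frobenius arms are $\lambda_i-i=\mu_i$ for $i\le\ell$.

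The main obstacle is to show that the legs are $\lambda'_i-i=\mu_i-1$, and that the Durfee rank is exactly $\ell$. For the rank, note that $\lambda_\ell\ge\ell+1>\ell$, while $\lambda_{\ell+1}=\ell+1-\nu_1\le\ell$. For the legs I would use the classical complementarity fact from \cite[Ch.~I, (1.7)]{Macdonald.1995}: the sets $\{\lambda_i-i\}_{1\le i\le n}$ and $\{j-1-\lambda'_j\}_{1\le j\le m}$ are complementary in a suitable interval. In our case the lower entries give $\lambda_{\ell+k}-(\ell+k)=-\nu_k$, so the $\nu$'s are exactly the complement of the $\mu$'s. Translating this via complementarity should force $\{\lambda'_j-j\}_{j\le\ell}=\{\mu_j-1\}$. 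If that bookkeeping turns out messy, my fallback is a direct count: compute $\lambda'_j$ for $j\le\ell$ as $\#\{i:\lambda_i\ge j\}$ from the explicit formula, and check it against $\mu_j+j-1$. I would also check small cases such as $n=1,2$ to confirm the conventions.

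Finally, the degree. By Theorem~\ref{thm:Kostant.n.cohomology} the component sits in degree $\ell(w)$, but it is simpler to count it directly. Every weight of $\frn_\C$ has coordinate sum $2$, so $\twedge^p$ has weights of coordinate sum $2p$. On the other hand,
\[
|\lambda|=\textstyle\sum\mu_i-\sum\nu_k+\tfrac{n(n+1)}2=2\sum\mu_i ,
\]
so $p=|\mu|=|\lambda|/2$. Multiplicity one follows from skew multiplicity freeness.
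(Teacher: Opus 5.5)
Your proof is correct, and it takes a genuinely different route from the paper. The paper gives no argument of its own here. It quotes Howe's theorem that the constituents of $\twedge(\Sym_n(\C))$ are the nests of distinct hooks $h_k=(k+1,1^{k-1})=[(k);(k-1)]$. It then only transcribes a nest $h_{k_1}\supset\cdots\supset h_{k_\ell}$ as $[(k_1,\dots,k_\ell);(k_1-1,\dots,k_\ell-1)]$, with $\mu_1\le n$ being the row bound $\lambda'_1=\mu_1$.

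You instead derive the list internally from Corollary~\ref{cor:Lmodule.structure.of.abelian.wedge.n}, by evaluating $w\rho_{\frg_\C}+(1,\dots,n)$ on the $2^n$ strictly decreasing signed permutations of $(n,\dots,1)$. Your values of $\rho_{\frg_\C}$, $\rho_{\frl_\C}$, $\rho_{\frn_\C}$ are right, as are the Durfee rank $\ell$, the arms $\mu_i$ and the identity $|\lambda|=2|\mu|$. The step you flagged as the obstacle does close. The complement of $\{\mu_i\}\cup\{-\nu_k\}$ in $\{-n,\dots,n\}$ is $\{-\mu_i\}\cup\{0\}\cup\{\nu_k\}$. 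Since $j-1-\lambda'_j$ is strictly increasing in $j$, its first $\ell$ values are $-\mu_1<\cdots<-\mu_\ell$, i.e.\ $\lambda'_j-j=\mu_j-1$.

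What your route buys:
\begin{enumerate}
\item The lemma becomes a consequence of the paper's own Kostant/Dirac machinery rather than an external input.
\item It gives an explicit bijection of $W^1$ with the strict partitions $\mu$, which is exactly what is needed to identify $W^1(R)$.
\item It applies verbatim to $\Alt_N(\C)$ and $\Mat_{p,q}(\C)$.
\end{enumerate}
The citation is shorter and delivers Howe's hook-nesting picture, which the paper then uses to detect the $R$-invariants.

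There are two harmless slips.

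First, $\nu_k\ge k$ gives $\lambda_{\ell+k}\le\ell$, not $\lambda_{\ell+k}\ge0$. Nonnegativity instead comes from $\nu_k\le\ell+k$: below $\nu_k$ lie exactly $k-1$ of the $\nu$'s and at most $\ell$ of the $\mu$'s. Alternatively, all weights of $\twedge(\Sym_n(\C))$ have nonnegative coordinates.

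Second, the component indexed by $w$ in Corollary~\ref{cor:Lmodule.structure.of.abelian.wedge.n} is not in degree $\ell(w)$. Its highest weight $w\rho_{\frg_\C}-\rho_{\frg_\C}+2\rho_{\frn_\C}$ is the sum of the $\dim\frn_\C-\ell(w)$ roots $\alpha$ of $\frn_\C$ with $w^{-1}\alpha>0$. So it lies in degree $\dim\frn_\C-\ell(w)$; here $\ell(w)=|\nu|$ and $\dim\frn_\C-|\nu|=|\mu|$. Since you rely on the direct count, nothing breaks.
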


\begin{rem}
The conditions $ \ell(\mu) \leq n $ and $ \mu_1 \leq n $ ensure that $ \lambda = [\mu;\eta] $ is contained in the $ n \times (n + 1) $ rectangular shape.
\end{rem}

\begin{lem}\label{lem:V.with.On.SOn.invariants}
Let $ V_{\lambda} $ be a representation of $ \GL_n(\C) $ which appears in $ \twedge(\Sym_n(\C)) $, and write 
$ \lambda = [\mu;\eta] $ as in Lemma~\ref{lem:Sp2n.wedgeSym_n}.  
\begin{enumerate}[label={\upshape(\arabic*)}]
\item\label{lem:V.with.On.SOn.invariants:item:On}
$ V_{\lambda}^{\OO_n(\C)} \neq \{ 0 \} $ (and consequently is isomorphic to $ \C $) if and only if 
for any $ j \leq \ell(\mu)/2 $, 
$ (\mu_{2 j - 1}, \mu_{2 j}) = (2 k + 1, 2 k) $ holds for some $ k \geq 0 $, 
i.e., consecutive two parts of $ \mu $ beginning with odd position is a pair of an odd number and an even number (in this order) with difference one.
\item\label{lem:V.with.On.SOn.invariants:item:SOn:n.even}
Let us assume that $ n $ is even.  Then $ V_{\lambda}^{\SO_n(\C)} \neq \{ 0 \} $ (and consequently is isomorphic to $ \C $) if and only if 
$ \lambda $ satisfies the former condition \ref{lem:V.with.On.SOn.invariants:item:On}, 
or the largest hook of the nest in $ \lambda $ is $ h_n $, and the union of the rest of the nest satisfies 
the condition in \ref{lem:V.with.On.SOn.invariants:item:On}.
\item\label{lem:V.with.On.SOn.invariants:item:SOn:n.odd}
Let us assume that $ n $ is odd.  
Then $ V_{\lambda}^{\SO_n(\C)} \neq \{ 0 \} $ (and consequently is isomorphic to $ \C $) if and only if 
$ V_{\lambda}^{\OO_n(\C)} \neq \{ 0 \} $ (hence satisfying the condition in \ref{lem:V.with.On.SOn.invariants:item:On}).
\end{enumerate}
\end{lem}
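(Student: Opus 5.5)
The plan is to use the classical branching criteria for $(\GL_n(\C),\OO_n(\C))$ and $(\GL_n(\C),\SO_n(\C))$, which are Cartan--Helgason type statements for the symmetric pairs $(\GL_n,\OO_n)$ and $(\GL_n,\SO_n)$. We then translate them into conditions on the Frobenius symbol $[\mu;\eta]$ with $\eta=\mu-(1^\ell)$.

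\textbf{Criteria to invoke.} For $\OO_n(\C)$: $\dim V_\lambda^{\OO_n(\C)}\le 1$, with equality if and only if every part of $\lambda$ is even (Helgason's theorem, or the fact that $\C[\Sym_n]$ is multiplicity free with the even partitions as spectrum). For $\SO_n(\C)$: $V_\lambda^{\SO_n}=V_\lambda^{\OO_n}\oplus (V_\lambda\otimes\det)^{\OO_n}$. The second summand is nonzero exactly when $\ell(\lambda)=n$ and $\lambda-(1^n)$ is even, i.e.\ every part of $\lambda$ is odd, since $\det$ corresponds to the column $(1^n)$ and restricts to $\OO_n$ as the sign character. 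In particular the relevant combinatorial question is only about the shape $\lambda$.

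\textbf{Step for \ref{lem:V.with.On.SOn.invariants:item:On}.} I would show that $\lambda=[\mu;\mu-(1^\ell)]$ has all rows even exactly when $\mu$ pairs up as $(2k+1,2k)$ in positions $(2j-1,2j)$.

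1. Compute the rows of $\lambda$ from the Frobenius data. For $i\le\ell$ one has $\lambda_i=\mu_i+i$ (arm $\mu_i$ plus diagonal cell $i$, with the convention $h_k=[(k);(k-1)]$). The rows below the diagonal block are $\lambda_i=\#\{j:\eta_j\ge i-j\}$ for $i>\ell$, i.e.\ they are the conjugate data of the legs $\eta$.
2. Parity of the first $\ell$ rows. The condition that $\mu_i+i$ is even for all $i\le \ell$ forces $\mu_i\equiv i\pmod 2$.
3. Parity of the lower rows. Analyse the columns of $\lambda$ below the diagonal, coming from $\eta_j=\mu_j-1$, and require that every lower row be even. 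This should force $\mu_{2j}=\mu_{2j-1}-1$: the leg lengths must coincide in adjacent pairs so that the lower rows come in even blocks.

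I would do step 3 by induction on $\ell$, peeling off the outermost pair of hooks $h_{\mu_1},h_{\mu_2}$. Alternatively, and more cleanly, one can use the known description of even partitions in Frobenius coordinates: $\lambda$ is even if and only if $\lambda=[\alpha_1,\dots;\beta_1,\dots]$ with $\alpha_i=\beta_i+1$, where $\alpha,\beta$ are paired appropriately (Macdonald's characterisation of $\lambda$ with $\lambda'$ even, conjugated). This step, checking that the pairing condition matches exactly the stated condition, is the main obstacle. I would verify it first on small cases such as $h_1=(2)$ and the nest $h_3h_2$ to fix indices.

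\textbf{Steps for \ref{lem:V.with.On.SOn.invariants:item:SOn:n.even} and \ref{lem:V.with.On.SOn.invariants:item:SOn:n.odd}.} The only extra $\SO_n$ invariants arise when $\lambda$ has exactly $n$ rows, all odd.

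1. Length $n$ forces the largest hook's leg to reach row $n$, i.e.\ $\eta_1=n-1$, hence $\mu_1=n$ and the largest hook is $h_n$.
2. Removing the first column $(1^n)$ from $\lambda$ gives $\lambda-(1^n)$. Removing $h_n$ and re-nesting, the remaining hooks form a diagram that must be even. By part \ref{lem:V.with.On.SOn.invariants:item:On}, which is applicable to that sub-nest since it is again of the Howe form inside a smaller box, this is the stated condition.
3. For $n$ odd, $\lambda$ has even size $|\lambda|=2|\mu|$, but $n$ odd parts summing to an even number is impossible. So no extra invariants occur, giving \ref{lem:V.with.On.SOn.invariants:item:SOn:n.odd}.

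The one-dimensionality claims all follow from $\dim V_\lambda^{\OO_n}\le1$ (symmetric pair), together with the fact that the two summands cannot both be nonzero, because their parities of parts differ.
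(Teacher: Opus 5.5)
Your strategy is the paper's: reduce to the criteria ``$V_\lambda^{\OO_n}\neq0$ iff $\lambda$ is even'' and ``$V_\lambda^{\SO_n}\neq0$ iff $\lambda$ is even or has exactly $n$ parts, all odd''. Then translate combinatorially on $\lambda=[\mu;\mu-(1^\ell)]$, and for $\SO_n$ strip off the outer hook $h_n$. For (3) you use a different but equally valid parity argument: $n$ odd parts cannot sum to the even number $|\lambda|=2|\mu|$. The paper instead observes that the outer hook $h_n$ would have first row $n+1$, which is even. Your version does not even need the hook structure.

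The step you leave open (step 3 of (1)) has a target that is too weak as you state it. Put $c_j=\lambda'_j=\eta_j+j$ for $j\le\ell$, so that $c_j\ge\ell$. All rows below the Durfee square are even iff the $c_j>\ell$ are even in number and equal in adjacent pairs $c_{2s-1}=c_{2s}$, i.e.\ $\mu_{2s-1}=\mu_{2s}+1$. The remaining $c_j$ then equal $\ell$, which means $\mu$ ends in $\dots,2,1$. Combined with $\mu_i\equiv i \pmod 2$ from your step 2, such a tail is nonempty exactly when $\ell$ is odd.

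Hence for odd $\ell(\mu)$ you must also derive $\mu_\ell=1$. The pairing has to be read with $\mu$ padded by $\mu_{\ell+1}=0$, the last pair being $(1,0)$; this is the $k=0$ block $[(1);(0)]$ the paper uses in the proof of Theorem~\ref{thm:Poincare.series.Sp2n:n.even}. If you impose the condition only for $j\le\ell(\mu)/2$, as literally written, the ``if'' direction fails. For example, $\mu=(2)$ and $\mu=(3)$ give $\lambda=(3,1)$ and $\lambda=(4,1,1)$, neither even, although the condition is vacuous. Note that $\mu=(3)$ even passes your step 2, so only the lower-row count excludes it. The same padding is needed for the sub-nest in (2): $\mu=(4,3)$ gives $\lambda=(5,5,2,2)$, which is neither even nor odd. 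Your test case $h_1$ cannot detect any of this; check $h_2$ and $h_3$ as well.

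Drop the ``alternative'' via Macdonald. The Frobenius relation $\alpha_i=\beta_i+1$ describes the whole spectrum $\Xi_n$ of $\twedge(\Sym_n(\C))$, not the even partitions; for instance $(3,1)\in\Xi_n$ is not even.

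Finally, in (2), removing the first column is not the same as removing $h_n$. The partition $\lambda-(1^n)$ is even iff its first row $\lambda_1-1=\mu_1=n$ is even and $\lambda^\flat=(\lambda_2-1,\dots,\lambda_n-1)$ is even. This is exactly where the hypothesis that $n$ is even enters, and you should say so.
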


\begin{proof}
We know (see \cite{Howe.SchurLecture.1995}) $ V_{\lambda}^{\OO(n)} \simeq \C $ if and only if $ \lambda = 2 \alpha $ for some partition $ \alpha $, i.e., 
all the parts of $ \lambda $ are even ($ \lambda $ is called an even partition).  
Similarly $ V_{\lambda}^{\SO(n)} \simeq \C $ if and only if all the parts of $ \lambda $ have the same parity (i.e., if and only if $ \lambda $ is an even partition or an odd partition).  
Note that if $ \lambda $ is an odd partition, then the $n$ parts of $ \lambda $ do not contain zero. 

We can translate the conditions on $ \lambda $ in Lemma~\ref{lem:Sp2n.wedgeSym_n} combinatorially to get \ref{lem:V.with.On.SOn.invariants:item:On}. 
For \ref{lem:V.with.On.SOn.invariants:item:SOn:n.even}, if we take off the largest hook $ h_n $ from an odd partition (this must appear since it is odd), the rest is an even partition. 
So we can use \ref{lem:V.with.On.SOn.invariants:item:On}.  
However, if $ n $ is odd as in \ref{lem:V.with.On.SOn.invariants:item:SOn:n.odd}, we cannot add $ h_n = (n + 1, 1^{n - 1}) $ to assemble an odd partition since $ n + 1 $ is even.  
Thus, $ \lambda $ must be an even partition. 
\end{proof}

We denote the set of partitions $ \lambda $ appearing in the decomposition of $ \twedge(\Sym_n(\C)) $ by $ \Xi_n $.  
Namely, 
\begin{equation*}
\Xi_n = \{ \lambda = [\mu; \eta] \mid \text{$ \mu $ and $ \eta $ satisfy the conditions in Lemma \ref{lem:Sp2n.wedgeSym_n}} \}.
\end{equation*}
Since $ \eta $ is completely determined by $ \mu $, 
$ \Xi_n $ is in bijection with the strict partitions $ \mu $ of size $ \leq \frac{1}{2} n (n + 1) $ with parts at most $n$.  

Consequently, we find
\begin{align*}
&
\twedge(\frn_\C)^R \simeq \twedge(\Sym_n(\C))^{\OO(n)} \simeq \bigoplus_{\lambda \in \Xi_n} V_{\lambda}^{\OO(n)}, 
\\
&
\twedge(\frn_\C)^{\frr_\C} \simeq \twedge(\Sym_n(\C))^{\SO(n)} \simeq \bigoplus_{\lambda \in \Xi_n} V_{\lambda}^{\SO(n)}.
\end{align*}
Let us calculate the Poincar\'{e} polynomials of the cohomologies.
To do so, we divide the cases.

\subsubsection{}
Let us assume $ n = 2m $ is even.

We take a Cartan subalgebra $ \frt_\C \subset \frr_\C $ and consider 
the Weyl groups $ W_{\frk_\C} $ and $ W_{\frr_\C} $ acting on this Cartan subalgebra.  
Note that $ \frt_\C $ is \emph{not} a Cartan subalgebra in $ \frk_\C $ since 
$ \rank \frk_\C = n = 2 m $ while $ \rank \frr_\C = m $.  
After some reflection, we find $ W_{\frk_\C} = W_{C_m} $ and $ W_{\frr_\C} = W_{D_m} $, so that 
$  |W_{\frk_\C}/W_{\frr_\C}| = 2 $.  
While we get $ |W_{\itK}/W_R| = 1 $ \cite[Subsection 3.8]{CNP.arxiv2023}.

Then, we get 
\begin{align*}
\dim H_{dR}(\itG/\itP) &= \dim H_{dR}(\itK/R) = 2^{\rank \frk_\C - \rank \frr_\C} |W_{\itK}/W_R| = 2^m, 
\\
\dim H_{dR}(\itK/R_0) &= 2^{\rank \frk_\C - \rank \frr_\C} |W_{\frk_\C}/W_{\frr_\C}| = 2^{m + 1}.
\end{align*}
Note that $ W_K = W_{\frk_\C} $ which contains $ W_{\frr_\C} $ with index $ 2 $.

\begin{thm}\label{thm:Poincare.series.Sp2n:n.even}
Assume $ n = 2 m $ is even.  
The Poincar\'{e} series of the symmetric spaces are given as follows.
\begin{align*}
P(\operatorname{LGr}(\R^{2n}), q) = P(\UU(n)/\OO(n), q) 
&= \prod_{k = 0}^{\lfloor (n - 1)/ 2 \rfloor} (1 + q^{4 k + 1}) 
= \prod_{k = 0}^{m - 1} (1 + q^{4 k + 1}) 
\\
P(\UU(n)/\SO(n), q) 
&= \prod_{k = 0}^{\lfloor (n - 1)/ 2 \rfloor} (1 + q^{4 k + 1}) + q^n \prod_{k = 0}^{\lfloor (n - 2)/ 2 \rfloor} (1 + q^{4 k + 1}) 
\\
&= (1 + q^n) \prod_{k = 0}^{m - 1} (1 + q^{4 k + 1}) 
\end{align*}
\end{thm}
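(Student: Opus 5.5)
The plan is to compute the Poincar\'e series combinatorially from the decomposition of $\twedge(\Sym_n(\C))$ and the invariant criteria, then match against the expected products. By Theorem~\ref{thm:Cartan-deRham}, Lemma~\ref{lemma:qC.isomorphic.to.nC} and Corollary~\ref{cor:R-invariants.in.wedge.n}, the cohomology of $\UU(n)/\OO(n)$ (resp.\ $\UU(n)/\SO(n)$) is graded-isomorphic to $\bigoplus_{\lambda\in\Xi_n} V_\lambda^{\OO(n)}$ (resp.\ $V_\lambda^{\SO(n)}$). Each $V_\lambda$ with $\lambda=[\mu;\eta]$ sits in degree $|\mu|$ with multiplicity one (Lemma~\ref{lem:Sp2n.wedgeSym_n}), and its invariants are at most one-dimensional. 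Hence the Poincar\'e series is $\sum q^{|\mu|}$, with $\mu$ running over the strict partitions allowed by Lemma~\ref{lem:V.with.On.SOn.invariants}.

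For $\OO(n)$ I use condition \ref{lem:V.with.On.SOn.invariants:item:On}. The strict partition $\mu$, with parts $\le n=2m$, must decompose into consecutive pairs $(\mu_{2j-1},\mu_{2j})=(2k+1,2k)$. Such a pair with $k=0$ is $(1,0)$, i.e.\ a single trailing part $1$ when $\ell(\mu)$ is odd. The first step is to check that the constraints force this reading: strictness and the ordering of pairs mean $\mu$ is exactly a union of distinct blocks $\{2k+1,2k\}$ for $k\ge1$, possibly together with a final singleton $1$, all parts $\le 2m$. Hence $2k+1\le 2m-1$, so $k\le m-1$, with $k=0$ giving the block $\{1\}$. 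Each block contributes $q^{4k+1}$ independently for $k=0,\dots,m-1$, which yields $\prod_{k=0}^{m-1}(1+q^{4k+1})$. The count $2^m$ agrees with Theorem~\ref{thm:dim.Poincare.series.of.cohomology}, which serves as a consistency check.

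For $\SO(n)$ with $n$ even I use \ref{lem:V.with.On.SOn.invariants:item:SOn:n.even}. Besides the $\OO(n)$-type diagrams, we now allow those whose largest hook is $h_n$ (i.e.\ $\mu_1=n$), with the remaining parts $\mu'=(\mu_2,\dots)$ satisfying condition (1). These two families are disjoint, since in an $\OO(n)$-type $\mu$ an even part $2k$ must be preceded by $2k+1\le n$, so $\mu_1=n$ is impossible. The remainder $\mu'$ has parts $\le n-1$, so the same block argument with $2k+1\le n-1$ gives $\prod_{k=0}^{m-1}(1+q^{4k+1})$, now multiplied by $q^n$ for the part $n$. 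Summing the two families gives $(1+q^n)\prod_{k=0}^{m-1}(1+q^{4k+1})$. The middle form in the statement matches this, because $\lfloor(n-2)/2\rfloor=m-1$.

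The main obstacle is making the combinatorial translation in Lemma~\ref{lem:V.with.On.SOn.invariants} airtight: the bijection between strict $\mu$ satisfying the pairing condition and subsets of $\{0,\dots,m-1\}$. In particular I must check that the ``$(1,0)$'' pair is correctly interpreted as the lone part $1$ at odd length, and that the degree is $|\mu|=4k+1$ per block. For this I would verify directly that $\lambda=[\mu;\mu-(1^\ell)]$ is even exactly under this pairing, by computing the rows of $\lambda$ from Frobenius coordinates two at a time. I would also confirm the total dimension $2^{m+1}$ for $\UU(n)/\SO(n)$ against the count $\dim H_{dR}(\itK/R_0)=2^{m+1}$ stated above.
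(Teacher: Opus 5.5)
Your proposal is correct and follows essentially the same route as the paper. Each multiplicity-one summand $V_\lambda$ sits in degree $|\lambda|/2=|\mu|$, and the $\OO(n)$-condition splits $\mu$ into blocks $\{2k+1,2k\}$ with $2k+1\le n$, each contributing a factor $1+q^{4k+1}$; these are the paper's nested pieces $[(2k+1,2k);(2k,2k-1)]$ of size $8k+2$. Beyond the paper, you make explicit two points it leaves implicit or calls ``similar'': that the trailing pair $(1,0)$ means the last part must be $1$ when $\ell(\mu)$ is odd (which the literal ``$j\le \ell(\mu)/2$'' misses), and the extra $\SO(n)$-family with $\mu_1=n$, which you handle correctly.
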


\begin{proof}
We reinterpret the partitions $\lambda$, which appear in Lemma~\ref{lem:V.with.On.SOn.invariants} \ref{lem:V.with.On.SOn.invariants:item:On}.  
The Young diagram $ \lambda $ is a nested union of 
the partitions $ [ (2 k + 1, 2 k) ;  (2 k, 2 k - 1) ] $ ($ k \geq 1 $, it becomes $ [(1); (0)] $ if $ k = 0 $). 
The size of $ [ (2 k + 1, 2 k) ;  (2 k, 2 k - 1) ] $ is $ 8 k + 2 $, and these sizes are all different.  
So we have a strict partition of parts contained in $ \{ 8 k + 2 : k \geq 0 \} $.  The maximal size of $ k $ satisfies $ 2 k + 1 \leq n $ because $ \lambda $ is contained in the $ n \times (n + 1) $ rectangular shape.  
Thus it is a strict partition of a number less than or equal to $ n^2 = 4 m^2 $.  
But this condition is automatically satisfied if we consider strict partitions with parts satisfying the above conditions. 
Since the grading is given by $ |\lambda|/2 $, for the Poincar\'{e} series, we use the strict partitions consisting of $ 4 k + 1 = (8 k + 2)/2 $, which gives the formula in the theorem.  

The case of $ R_0 $ is similar.
\end{proof}

\subsubsection{}
Let us assume $ n = 2m + 1 $ is odd.

As in the case of $ n $ is even, we take a Cartan subalgebra $ \frt_\C \subset \frr_\C $ and consider 
the Weyl groups $ W_{\frk_\C} $ and $ W_{\frr_\C} $ acting on this Cartan subalgebra.  
Now note that $ \rank \frk_\C = n = 2 m + 1 $ while $ \rank \frr_\C = m $.  
After some reflection, we find $ W_{\frk_\C} = W_{C_m} $ and $ W_{\frr_\C} = W_{B_m} $, so that 
$  |W_{\frk_\C}/W_{\frr_\C}| = 1 $.    
We also get $ |W_{\itK}/W_R| = 1 $ \cite[Subsection 3.8]{CNP.arxiv2023}.
Thus we have 
\begin{align*}
\dim H_{dR}(\itG/\itP) &= \dim H_{dR}(\itK/R) = 2^{\rank \frk_\C - \rank \frr_\C} |W_{\itK}/W_R| = 2^{m + 1}, 
\\
\dim H_{dR}(\itK/R_0) &= 2^{\rank \frk_\C - \rank \frr_\C} |W_{\frk_\C}/W_{\frr_\C}| = 2^{m + 1}.
\end{align*}
Their dimensions are equal.

\begin{thm}\label{thm:Poincare.series.Sp2n:n.odd}
Assume $ n = 2 m + 1 $ is odd.  
The Poincar\'{e} series of the symmetric spaces are given as follows.
\begin{align*}
P(\operatorname{LGr}(\R^{2m + 1}), q) = P(\UU(n)/\OO(n), q) 
&= \prod_{k = 0}^{\lfloor (n - 1)/ 2 \rfloor} (1 + q^{4 k + 1}) 
= \prod_{k = 0}^{m} (1 + q^{4 k + 1}) 
\\
P(\UU(n)/\SO(n), q) 
&= \prod_{k = 0}^{\lfloor (n - 1)/ 2 \rfloor} (1 + q^{4 k + 1}) 
= \prod_{k = 0}^{m} (1 + q^{4 k + 1}) 
\end{align*}
In particular, they have the same Poincar\'{e} series.
\end{thm}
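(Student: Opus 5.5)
The proof follows the even case (Theorem~\ref{thm:Poincare.series.Sp2n:n.even}). The plan is to compute the Poincar\'{e} series as the graded dimension of $R$-invariants (respectively $R_0$-invariants) in $\twedge(\frn_\C)$, using Theorem~\ref{thm:Cartan-deRham}, Lemma~\ref{lemma:qC.isomorphic.to.nC} and Corollary~\ref{cor:R-invariants.in.wedge.n}. Each summand $V_\lambda$, $\lambda\in\Xi_n$, sits in degree $|\lambda|/2=|\mu|$ with multiplicity one (Lemma~\ref{lem:Sp2n.wedgeSym_n}). Since $(\GL_n(\C),\OO_n(\C))$ is symmetric, each summand contributes at most a one-dimensional space of invariants. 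Hence
\[
P(\UU(n)/\OO(n),q)=\sum_{\lambda\in\Xi_n,\;V_\lambda^{\OO_n}\neq 0} q^{|\lambda|/2},
\]
and likewise for $\SO_n$.

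For $\OO(n)$, I use Lemma~\ref{lem:V.with.On.SOn.invariants}\ref{lem:V.with.On.SOn.invariants:item:On}. The admissible $\mu$ are exactly the strict partitions that split into consecutive pairs $(2k+1,2k)$ with $k\ge 0$, where the pair for $k=0$ is $(1,0)$, i.e.\ a single part $1$ at the end. Thus $\lambda$ is a nest of blocks $[(2k+1,2k);(2k,2k-1)]$, each of size $8k+2$, hence of degree $4k+1$. Distinct blocks use distinct $k$, and any set of distinct $k$'s arranged in decreasing order gives an admissible strict $\mu$. The constraint $\mu_1\le n$ becomes $2k+1\le n=2m+1$, i.e.\ $0\le k\le m$. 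The condition $\ell(\mu)\le n$ holds automatically: the length is at most $2(m+1)-1=n$, because the $k=0$ block contributes only one part. Choosing freely which blocks occur then gives $\prod_{k=0}^{m}(1+q^{4k+1})$.

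For $\SO(n)$ with $n$ odd, Lemma~\ref{lem:V.with.On.SOn.invariants}\ref{lem:V.with.On.SOn.invariants:item:SOn:n.odd} says that $V_\lambda^{\SO_n}\neq0$ if and only if $V_\lambda^{\OO_n}\neq0$. So the same set of $\lambda$ appears, and the two Poincar\'{e} series coincide.

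As a consistency check, evaluating at $q=1$ gives $2^{m+1}$, which matches the dimensions computed just before the theorem via Theorem~\ref{thm:CartanBorelforDisconnected}. The only delicate step is the bookkeeping in the combinatorial translation. One must confirm that the pairing of consecutive parts of $\mu$ really yields independent blocks, including the boundary $k=0$ block and the bound $k\le m$ forced by $\mu_1\le n$. I will check this carefully in Frobenius notation; the rest is direct.
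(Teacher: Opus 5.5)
Your proposal is correct and follows essentially the same route as the paper. The paper's proof also obtains the $\OO(n)$ case by nesting blocks $[(2k+1,2k);(2k,2k-1)]$ of degree $4k+1$ with $2k+1\le n$, exactly as in the even case, and then uses item (3) of Lemma~\ref{lem:V.with.On.SOn.invariants} to identify the $\SO(n)$-invariants with the $\OO(n)$-invariants. Your explicit treatment of the $k=0$ block as a final part $1$, and your check that $\ell(\mu)\le n$ holds automatically, are just more careful bookkeeping within that same argument.
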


\begin{proof}
For $ R $-invariants, we have the same reasoning as in the proof of Theorem~\ref{thm:Poincare.series.Sp2n:n.even}.
Lemma~\ref{lem:V.with.On.SOn.invariants} tells us that the $ R_0 $ invariants are the same as the $ R $-invariants.
\end{proof}

\subsection{The case of $ G = \GL_n(\R) $}
\newcommand{\mbfq}{\boldsymbol{q}}

This case is corresponding to Casian-Kodama \cite{Casian.Kodama.2013}.
We take $ P = P_{(p, q)} \subset G $, $ L = \GL_p(\R) \times \GL_q(\R) $, 
$ K = \OO(n) $, $ R = \OO(p) \times \OO(q) $, where $ p + q = n $.  
Note that $ G $ is not connected as a Lie group, but as an algebraic group, it is connected (in Zariski topology).
We denote $ K_\C = \OO_n(\C) $ and $ R_\C = \OO_p(\C) \times \OO_q(\C) $. 

Note that 
$ \OO(n)/\bigl( \OO(p) \times \OO(q) \bigr) \simeq \SO(n)/S\bigl( \OO(p) \times \OO(q) \bigr) $ is isomorphic to the Grassmannian $ \Gr_p(\R^{p + q}) $ 
and $ \SO(n)/\bigl( \SO(p) \times \SO(q) \bigr) $ is its double cover. In particular we can apply Theorem \ref{thm:CartanBorelforDisconnected}.

The nilpotent radical of $ \frP = \Lie P $ is $ \frn = \Mat_{p, q}(\R) $ and after complexification we get 
\begin{equation*}
\twedge (\frn_\C) = \twedge (\Mat_{p,q}(\C)) = \sum_{\lambda} V_{\lambda}^{(p)} \otimes V_{\transpose{\lambda}}^{(q)} \quad (\text{as an $ L_\C $-module}), 
\end{equation*}
where $ \lambda $ runs over the Young diagrams contained inside the $ p \times q $ rectangular shape, 
and $V_{\lambda}^{(p)} $ means that it is a representation of $ \GL_p(\C) $.  
We see that $ (V_{\lambda}^{(p)} \otimes V_{\transpose{\lambda}}^{(q)})^{R_{\C}} \neq 0 $ if and only if 
$ \lambda $ is a Young diagram consisting of $ 2 \times 2 $ squares (instead of single boxes).  
Put $ a = \bigl\lfloor \dfrac{p}{2} \bigr\rfloor, b = \bigl\lfloor \dfrac{q}{2} \bigr\rfloor $ and $ m = \bigl\lfloor \dfrac{n}{2} \bigr\rfloor $.  

To state the results clearly, and to compare the statements of the Cartan-de Rham theorem and the Cartan-Borel theorem, we divide the cases.

\subsubsection{$ (p, q) \equiv (0, 0) \pmod{2} $}
\begin{align*}
& \rank \frk = \rank \frr & & m = a + b 
\\
& W_K = B_m & & W_R = B_a \times B_b 
\\
& W_{\frk_\C} = D_m & & W_{\frr_\C} = D_a \times D_b
\end{align*}
In this case, $ \OO(p) \times \OO(q) $-invariants and $ \SO(p) \times \SO(q) $-invariants in $ \twedge (\frn_\C) $ are the same.
Thus the
Cartan-de Rham Theorem gives the Poincar\'{e} polynomial:
\begin{equation*}
P(\SO(n)/S\bigl( \OO(p) \times \OO(q) \bigr), \mbfq) = \begin{bmatrix} a + b \\ a \end{bmatrix}_{\mbfq^4}.
\end{equation*}
Here $ \begin{bmatrix} m \\ k \end{bmatrix}_{\mbfq} $ denotes the $ q $-binomial coefficient (also called Gaussian binomial coefficient) defined as
\begin{equation*}
\begin{bmatrix}
m\\ k
\end{bmatrix}_{\mbfq}
= \frac{[m]_{\mbfq} !}{[k]_{\mbfq} !\, [m-k]_{\mbfq} !}
= \frac{(1-\mbfq^m)(1-\mbfq^{m-1})\cdots(1-\mbfq^{m-k+1})}
     {(1-\mbfq)(1-\mbfq^2)\cdots(1-\mbfq^k)}
= \prod_{i=1}^{k}
\frac{1-\mbfq^{m-k+i}}{1-\mbfq^i}.
\end{equation*}
It is well known that this $ q $-binomial coefficient gives a generating function of the enumeration of Young diagrams inside the rectangular shape $ m \times k $.
See \cite[\S{1.7}]{Stanley.vol1.2012}.
Cartan-Borel theorem applied to $ K/ R_0 $ shows that:
\begin{equation*}
\dim H_{dR}(\SO(n)/\bigl( \SO(p) \times \SO(q) \bigr)) = 2 \binom{a + b}{a}. 
\end{equation*}

\subsubsection{$ (p, q) \equiv (1, 0) \pmod{2} $}
\begin{align*}
& \rank \frk = \rank \frr & &  m = a + b 
\\
& W_K = B_m & &  W_R = B_a \times B_b 
\\
& W_{\frk_\C} = B_m & &  W_{\frr_\C} = B_a \times D_b
\end{align*}
Again, in this case, $ \OO(p) \times \OO(q) $-invariants and $ \SO(p) \times \SO(q) $ invariants in $ \twedge (\frn_\C) $ are the same.
Thus 
Cartan-de Rham Theorem tells:
\begin{equation*}
P(\SO(n)/S\bigl( \OO(p) \times \OO(q) \bigr), \mbfq) = \begin{bmatrix} a + b \\ a \end{bmatrix}_{\mbfq^4}.
\end{equation*}
Cartan-Borel theorem tells:
\begin{equation*}
\dim H_{dR}(\SO(n)/\bigl( \SO(p) \times \SO(q) \bigr)) = 2 \binom{a + b}{a}.
\end{equation*}
Again $ 2 $ appears because it is a double cover.

\subsubsection{$ (p, q) \equiv (1, 1) \pmod{2} $}

In this case, we take a Cartan subalgebra $ \frt $ from $ \frr $, which is strictly contained in the Cartan subalgebra of $ \lie{k} $.
\begin{align*}
& \rank \frk = \rank \frr + 1 & &  m = a + b + 1 
\\
& W_K = B_{a + b} & &  W_R = B_a \times B_b 
\\
& W_{\frk_\C} = B_{a + b} & &  W_{\frr_\C} = B_a \times B_b
\end{align*}
In this case, $ \OO(p) \times \OO(q) $-invariants and $ \SO(p) \times \SO(q) $ invariants in $ \twedge (\frn_\C) $ are different.  
Cartan-de Rham Theorem implies:
\begin{align*}
P(\SO(n)/S\bigl( \OO(p) \times \OO(q) \bigr), \mbfq) 
&
= (1 + \mbfq^{p + q - 1}) \begin{bmatrix} a + b \\ a \end{bmatrix}_{\mbfq^4}.
\end{align*}
Cartan-Borel theorem implies:
\begin{equation*}
\dim H_{dR}(\SO(n)/\bigl( \SO(p) \times \SO(q) \bigr)) = 2 \binom{a + b}{a}.
\end{equation*}
In this case the de Rham cohomologies of 
$ \SO(n)/S\bigl( \OO(p) \times \OO(q) \bigr) $ and $ \SO(n)/\bigl( \SO(p) \times \SO(q) \bigr) $ are the same.

\subsection{Case of $ G = \SO^*(4n) $}

We consider the case where $ \itG = \SO^*(4n) $ and $ \itP $ is the Siegel parabolic with 
a Levi component $ \itL = \GL_n(\bbH) $ and the nilpotent radical $ \frn = \Her_n(\bbH) $, the $n\times n$ Hermitian matrices over the quaternions.  
A maximal compact subgroup of $ \itG $ is $ \itK = \UU(2n) $ and $ R = \itL \cap \itK = \USp(2n) $, the unitary symplectic group of size $ 2 n $ (of rank $ n $), namely 
$ \USp(2n) = \UU(2 n) \cap \Sp_{2 n}(\C) $.  We have an isomorphism $ \itG/ \itP = \SO^*(4n)/ P \simeq \UU(2n)/ \USp(2n) = \itK/ R $.  
Both $ \itL $ and $ R $ are connected in this case.

First, we examine the $ \itL_{\C} $-module structure on $ \twedge(\frn_\C) $ and determine $ R $-invariants.  

Let us denote the space of alternating matrices of size $ m $ by $ \Alt_m(\C) $.  
In the standard realization, the nilpotent radical $ \frn_\C $ of $ \frP_\C $ can be identified with $ \Alt_{2n}(\C) $ and  
the adjoint action of the Levi subgroup $ \itL_\C = \GL_{2n}(\C) $ on $ \frn_\C = \Alt_{2n}(\C) $ is unimodular, i.e., 
$ g \in \GL_{2n}(\C) $ acts on $ z \in \Alt_{2n}(\C) $ as $ g \cdot z = g z \transpose{g} $. 
This representation is irreducible.  

Let us explain which irreducible representations $ V_{\lambda} $ of $ \GL_{N}(\C) $ appear in $ \twedge(\Alt_{N}(\C)) $ for general $ N $.  
Define a hook $ \widetilde{h}_k = (k, 1^{k}) $ for $ k \geq 1 $ (the transpose of the hook $ h_k $ in \S\ref{subsec:example.sp2nR}).  
Then $ \lambda $ must be a Young diagram obtained by nesting $ \widetilde{h}_k $'s (it must make a diagram, so that one can only use $ \widetilde{h}_k $ at most once for each $ k $) according to \cite[Theorem 4.4.4]{Howe.SchurLecture.1995}.  
We reinterpret Howe's result in such a way that the description becomes more convenient for us.    
In Frobenius notation for Young diagrams, the hook is denoted by $ \widetilde{h}_k = [(k - 1); k] $.  
For example nest of $ \widetilde{h}_4 $ and $ \widetilde{h}_2 $ gives $ \lambda = (4,3,2,2,1) $, which is written as 
$ \lambda = [(3,1); (4,2)] $ in Frobenius notation (this example shows everything is transposed comparing with \S\ref{subsec:example.sp2nR}).  
A partition $ \lambda $ obtained in that way always has even size because $ |\widetilde{h}_k| = 2 k $ is even.

\begin{lem}[{\cite[Theorem 4.4.4]{Howe.SchurLecture.1995}}]\label{lem:SO4n.wedgeAlt}
An irreducible representation $ V_{\lambda} $ of $ \GL_N(\C) $ appears in $ \twedge(\Alt_N(\C)) $ if and only if 
$ \lambda = [\mu; \eta] $ satisfies the conditions 
\begin{enumerate}[label={\upshape(\arabic*)}]
\item\label{lem:SO4n.wedgeAlt:item:1}
$ \eta $ is a strict partition (partition with mutually distinct parts) of length 
$ \ell := \ell(\eta) < N $ and $ \eta_1 < N $, 
\item\label{lem:SO4n.wedgeAlt:item:2}
$ \mu = \eta - (1^{\ell}) $.
\end{enumerate}
If the representation $ V_{\lambda} $ appears in $ \twedge(\Alt_N(\C)) $, it is contained in the $ p $-th graded piece $ \twedge^p(\Alt_N(\C)) $ with $ p = |\lambda|/2 = |\eta| $,  with multiplicity one. 
\end{lem}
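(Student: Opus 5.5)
The lemma is the alternating analogue of Lemma~\ref{lem:Sp2n.wedgeSym_n}, and the most economical route is to deduce it from that lemma by transposition rather than redo Howe's argument. The key input is the $\GL_N(\C)$-module identification $\twedge^p(\Alt_N(\C)) \cong \twedge^p(\twedge^2\C^N)$ and $\twedge^p(\Sym_N(\C)) \cong \twedge^p(S^2\C^N)$. We then use the classical plethysm duality: $\twedge^p(\twedge^2 V)$ and $\twedge^p(S^2 V)$ have character expansions related by the involution $\omega$ on symmetric functions, which sends $s_\lambda$ to $s_{\transpose{\lambda}}$. Concretely, $\omega(e_p[e_2]) = e_p[h_2]$ since $|e_p|$ has even total degree $2p$.

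This gives the explicit expansions
\[
\twedge(\twedge^2 V) \cong \bigoplus_{\lambda=\transpose{\nu},\ \nu\in\Xi} V_\lambda,
\qquad
\twedge(S^2 V) \cong \bigoplus_{\nu\in\Xi} V_\nu ,
\]
where $\Xi$ is the set of nests of hooks $h_k$, each used at most once, all multiplicities are one, and $\dim V = N$ imposes only the length bound $\ell(\lambda)\le N$.

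The first step is to recall the generating-function form of Lemma~\ref{lem:Sp2n.wedgeSym_n}, valid independently of $N$ in the stable range:
\[
\sum_p e_p[h_2] = \prod_{i\le j}(1+x_ix_j) = \sum_{\mu \text{ strict}} s_{[\mu;\mu-(1^{\ell})]} .
\]
The second step is to apply $\omega$ to this identity, using $\prod_{i<j}(1+x_ix_j)=\sum_p e_p[e_2]$. Under $\omega$, the Frobenius symbol $[\alpha;\beta]$ of a partition becomes $[\beta;\alpha]$ for its transpose. Hence
\[
\prod_{i<j}(1+x_ix_j)=\sum_{\eta \text{ strict}} s_{[\eta-(1^{\ell});\eta]} ,
\]
which is exactly condition~\ref{lem:SO4n.wedgeAlt:item:2} with $\mu=\eta-(1^\ell)$. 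The nests of $h_k$ transpose to the nests of $\widetilde{h}_k$, matching the description above. The degree statement follows because $|\lambda|=2p$ is preserved under transposition, and $|\eta|=|\mu|+\ell$ equals half of $|\lambda|=|\mu|+|\eta|+\ell$.

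The third step is to specialize to $N$ variables. Here $s_\lambda(x_1,\dots,x_N)\neq0$ exactly when $\ell(\lambda)\le N$. The length of $\lambda=[\mu;\eta]$ is $\eta_1+1$, and the column constraint gives $\ell(\eta)\le \mu_1+1$. Since $\mu_1=\eta_1-1$, this yields $\ell(\eta)\le\eta_1<N$, which is condition~\ref{lem:SO4n.wedgeAlt:item:1}. Distinct Schur polynomials remain linearly independent, so multiplicity one persists.

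The main obstacle is justifying the $\omega$-duality rigorously, because $\omega$ does not commute with plethysm in general. The sign rule is $\omega(f[g])=f[\omega g]$ when $g$ has even degree. Our $g$ is $e_2$ or $h_2$, both of degree $2$, so we need this with the outer $f=e_p$ fixed. I would verify it directly on power sums: $p_k[g]$ has degree $2k$, and $\omega p_{2k}=-p_{2k}$ matches $\omega$ applied inside. Alternatively, I would simply cite Macdonald's formula (I.5, Ex.~9) for $\prod_{i<j}(1+x_ix_j)$, which gives the Frobenius form directly.
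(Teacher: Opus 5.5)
Your argument is correct, but it takes a different route from the paper. The paper gives no argument of its own: it simply cites Howe's Theorem~4.4.4, just as it cites Theorem~4.4.2 for Lemma~\ref{lem:Sp2n.wedgeSym_n}.

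You instead \emph{derive} the alternating case from the symmetric one, in three steps:
\begin{enumerate}
\item The identity $\prod_{i\le j}(1+x_ix_j)=\sum_{\mu\ \text{strict}} s_{[\mu;\mu-(1^{\ell})]}$ holds in the ring of symmetric functions. This is because Lemma~\ref{lem:Sp2n.wedgeSym_n} holds for every $n$, and its bound $\mu_1\le n$ is exactly $\ell(\lambda)\le n$.
\item Applying $\omega$ turns this into $\prod_{i<j}(1+x_ix_j)=\sum_{\eta\ \text{strict}} s_{[\eta-(1^{\ell});\eta]}$.
\item Restricting to $N$ variables cuts out exactly $\ell(\lambda)=\eta_1+1\le N$. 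Linear independence of Schur polynomials then gives multiplicity one, and homogeneity gives the degree $p=|\eta|$.
\end{enumerate}

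What your route buys:
\begin{enumerate}
\item One of the two cited results now implies the other.
\item It makes rigorous the paper's informal remark that ``everything is transposed'' relative to \S\ref{subsec:example.sp2nR}.
\item It shows why transposition works only stably: for fixed $N$ the two exterior algebras have dimensions $2^{N(N+1)/2}$ and $2^{N(N-1)/2}$, so the $N$-variable truncation must be done after applying $\omega$.
\item It shows that the condition $\ell(\eta)<N$ in item~(1) is redundant once $\eta_1<N$.
\end{enumerate}
The paper's citation is shorter and treats the two cases independently.

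The cost of your route is the plethystic sign rule. Your first justification, ``since $|e_p|$ has even total degree $2p$'', uses the wrong criterion. For example, $e_2[p_1]=e_2$ has even total degree, yet $\omega(e_2[p_1])=h_2\neq e_2=e_2[\omega p_1]$. The correct reason is the one you give at the end: the \emph{inner} function $e_2$ (or $h_2$) has even degree. This is verified on power sums via $\omega(p_k[p_2])=\omega(p_{2k})=-p_{2k}=p_k[\omega p_2]$ and $\omega(p_k[p_1^2])=p_k^2=p_k[\omega(p_1^2)]$. It then extends to $f=e_p$ because both $f\mapsto f[g]$ and $\omega$ are ring homomorphisms.
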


\begin{rem}
The conditions $ \ell(\eta) < N $ and $ \eta_1 < N $ ensure that $ \lambda = [\mu;\eta] $ is contained in the $ N \times (N - 1) $ rectangular shape.
\end{rem}

Now we consider the case where $ N = 2n $ is even.

\begin{lem}\label{lem:V.with.USp.invariants}
Let $ V_{\lambda} $ be a representation of $ \GL_{2n}(\C) $ which appears in $ \twedge(\Alt_{2n}(\C)) $, and write it 
$ \lambda = [\mu;\eta] $ as in Lemma~\ref{lem:SO4n.wedgeAlt}.  
$ V_{\lambda}^{\Sp_{2n}(\C)} \neq \{ 0 \} $ (and consequently is isomorphic to $ \C $) if and only if 
for any $ j \leq \ell(\eta)/2 $, 
$ (\eta_{2 j - 1}, \eta_{2 j}) = (2 k + 1, 2 k) $ holds for some $ k \geq 0 $, 
i.e., consecutive two parts of $ \eta $ beginning with odd position is a pair of an odd number and an even number (in this order) with difference one.
\end{lem}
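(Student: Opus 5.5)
Two ingredients combine here: the branching rule for $\Sp_{2n}(\C)$-invariants, and a combinatorial translation into the Frobenius data of Lemma~\ref{lem:SO4n.wedgeAlt}. This parallels the proof of Lemma~\ref{lem:V.with.On.SOn.invariants}. The first ingredient is classical (Helgason / Cartan--Helgason for the symmetric pair $(\GL_{2n},\Sp_{2n})$, or Howe \cite{Howe.SchurLecture.1995}): for a polynomial representation $V_\lambda$ of $\GL_{2n}(\C)$ we have $V_\lambda^{\Sp_{2n}(\C)}\simeq\C$ if and only if the transpose $\transpose{\lambda}$ is even. Equivalently, every column of $\lambda$ has even length, i.e.\ $\lambda_{2i-1}=\lambda_{2i}$ for all $i$. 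Otherwise the invariants vanish. Since $(\GL_{2n},\Sp_{2n})$ is a symmetric pair, the dimension is at most one, which gives the parenthetical claim.

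It then remains to show the following, for $\lambda=[\mu;\eta]$ with $\eta$ strict and $\mu=\eta-(1^\ell)$: every column of $\lambda$ has even length if and only if $(\eta_{2j-1},\eta_{2j})=(2k+1,2k)$ for each pair. I would phrase everything in terms of $\lambda'=\transpose{\lambda}$. In Frobenius coordinates, transposition swaps the arm and leg sequences, so $\lambda'=[\eta;\mu]$ with $\mu_i=\eta_i-1$. Here $\eta_i$ are the arms of $\lambda'$, so $\lambda'_i=\eta_i+i$ for $i\le \ell$. Below the diagonal, the rows $\lambda'_i$ with $i>\ell$ are determined by the legs $\mu_j=\eta_j-1$; concretely $\lambda'_i=\#\{j: \mu_j\ge i-j\}$.

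The condition "$\lambda'$ even" thus splits into two parts. The first concerns the rows inside the Durfee square: $\eta_i+i$ must be even for all $i\le\ell$, so $\eta_i\equiv i \pmod 2$. This means $\eta_{2j-1}$ is odd and $\eta_{2j}$ is even. The second concerns the rows below the Durfee square. The key step, and the main obstacle, is showing that evenness of those rows forces $\eta_{2j-1}-\eta_{2j}=1$ and imposes nothing more; for odd $\ell$ one must also check the unpaired last part. I would proceed as follows.
\begin{itemize}
\item Read off the column structure of $\lambda$ directly. Column $j\le\ell$ has length $\eta_j+j-1$, and the columns to the right of the Durfee square have lengths coming from $\mu$.
\item For $j\le \ell$, evenness of column $j$ gives the parity condition $\eta_j\equiv j+1$. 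This needs to be compared carefully with the row computation above, so I will fix one convention and recheck it on the example $\widetilde h_2\cup\widetilde h_4$.
\item For the columns to the right of the Durfee square, evenness means these columns pair up as equal consecutive lengths. In terms of the hook decomposition, this is exactly the requirement that the hooks $\widetilde h_{\eta_{2j-1}}$ and $\widetilde h_{\eta_{2j}}$ be adjacent, i.e.\ $\eta_{2j-1}=\eta_{2j}+1$.
\end{itemize}

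Alternatively, and probably cleaner, I would argue by induction, peeling off the two outermost hooks. The union of $\widetilde h_{2k+1}$ and $\widetilde h_{2k}$ nested consecutively is a shape all of whose columns have even length. Removing an outer block of this form from $\lambda$ preserves the evenness condition on the remainder, because the block occupies whole columns, two rows/columns shifted. Conversely, if the two largest parts are not of the form $(2k+1,2k)$, one exhibits an odd-length column among the first or last columns of the outer hook region. The case $\ell$ odd then forces the last remaining single hook to be $\widetilde h_{\eta_\ell}$ with $\eta_\ell=0$, i.e.\ to be empty, consistent with the convention $(1,0)$ for $k=0$. Finally I would check small cases ($N=2,4$) against the dimension count $2^{n}\cdot$(Weyl factor) from Theorem~\ref{thm:dim.Poincare.series.of.cohomology} as a sanity test.
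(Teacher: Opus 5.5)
Your first step (Cartan--Helgason: $V_\lambda^{\Sp_{2n}(\C)}\neq 0$ iff every column of $\lambda$ has even length, with dimension at most one) is exactly the paper's route. The paper then only asserts that the combinatorics transposes from the $\OO_n$ case, so everything rests on your combinatorial translation. There the proposal has a genuine error, at exactly the point you flagged: the unpaired last part when $\ell$ is odd.

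After peeling off the pairs, what remains is the single hook $\widetilde h_{\eta_\ell}=(\eta_\ell,1^{\eta_\ell})$ with $\eta_\ell\ge 1$. It cannot be empty, because $\eta$ has exactly $\ell$ nonzero parts. Its columns have lengths $\eta_\ell+1,1,\dots,1$, so it is column-even iff $\eta_\ell=1$, i.e.\ the leftover is $\widetilde h_1=(1,1)$, not $\eta_\ell=0$. Your conclusion would exclude every odd $\ell$. For example, $\eta=(1)$ gives $\lambda=(1,1)$, yet $V_{(1,1)}=\wedge^2\C^{2n}$ contains the symplectic form; this is the degree-one class giving the factor $1+q$ in Theorem~\ref{thm:Poincare.series.SOstar4n}. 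The correct condition is the pair condition for all $j\le\lceil \ell/2\rceil$ with $\eta_{\ell+1}:=0$, which is the paper's $k=0$ convention $[(1);(0)]$. Read literally with $j\le\ell/2$, the lemma is false: $\eta=(2)$ gives $\lambda=(2,1,1)$, whose columns have lengths $3$ and $1$.

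Your first approach also contains a slip. Column $j\le\ell$ of $\lambda$ has length $\eta_j+j$ (leg plus $j$), not $\eta_j+j-1$. So the parity condition is $\eta_j\equiv j \pmod 2$, the same as your row computation for $\transpose{\lambda}$; column $j$ of $\lambda$ is row $j$ of $\transpose{\lambda}$, so there is nothing to reconcile.

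Column-evenness also does not mean that the columns to the right of the Durfee square ``pair up''. For $\eta=(3,2)$ one gets $\lambda=(3,3,2,2)$, which has a single such column. What column-evenness means is that the rows pair up, $\lambda_{2i-1}=\lambda_{2i}$. Since $\lambda_i=\eta_i+i-1$ for $i\le\ell$, this is where $\eta_{2j-1}=\eta_{2j}+1$ comes from.

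Your peeling induction is the clean way to finish, once made precise:
\begin{enumerate}
\item Rows $1,2$ of $\lambda$ have lengths $\eta_1$ and $\eta_2+1$; columns $1,2$ have lengths $\eta_1+1$ and $\eta_2+2$.
\item If $\eta_1>\eta_2+1$, then columns $\eta_2+2,\dots,\eta_1$ have length one. Hence column-evenness forces $(\eta_1,\eta_2)=(2k+1,2k)$.
\item In that case $\lambda_1=\lambda_2$, and each column $j\ge 3$ of $\lambda$ has length $2$ plus the length of column $j-2$ of $[(\eta_3-1,\dots,\eta_\ell-1);(\eta_3,\dots,\eta_\ell)]$.
\end{enumerate}
This gives both directions of the induction, with base cases $\ell=0$ (vacuous) and $\ell=1$ (which forces $\eta_1=1$).
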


\begin{proof}
We know $ V_{\lambda}^{\Sp_{2n}(\C)} \simeq \C $ if and only if $ \transpose{\lambda} = 2 \alpha $ for some partition $ \alpha $, i.e., 
$ \lambda $ is the transpose of an even partition.  We often denote $ \lambda = \alpha^2= (\alpha_1,\alpha_1,\alpha_2,\alpha_2,\dots) $ when $ \transpose{\lambda} = 2 \alpha = (2 \alpha_1, 2 \alpha_2, \dots) $.  
Thus we can translate the proof of Lemma~\ref{lem:V.with.On.SOn.invariants} taking the transpose all over.
\end{proof}

We denote the set of partitions $ \lambda $ appearing in the decomposition of $ \twedge(\Alt_N(\C)) $ by $ \widetilde{\Xi}_N $.  
Namely, 
\begin{equation*}
\widetilde{\Xi}_N = \{ \lambda = [\mu; \eta] \mid \text{$ \mu $ and $ \eta $ satisfy the conditions in Lemma \ref{lem:SO4n.wedgeAlt}} \}.
\end{equation*}
Since $\mu$ is completely determined by $ \eta $, 
$ \widetilde{\Xi}_N $ is in bijection with the strict partitions $ \eta $ of size $ \leq \frac{1}{2} N (N - 1) $ with parts at most $ N - 1 $.  Thus we get
\begin{align*}
&
\twedge(\frn_\C)^R = \twedge(\frn_\C)^{\frr_\C} 
\simeq \twedge(\Alt_{2n}(\C))^{\Sp_{2n}(\C)} \simeq \bigoplus_{\lambda \in \widetilde{\Xi}_N} V_{\lambda}^{\Sp_{2n}(\C)} .
\end{align*}
The Poincar\'{e} polynomials of the cohomologies are the same as in \S\ref{subsec:example.sp2nR} for $ n $ even.

We take a Cartan subalgebra $ \frt_\C \subset \frr_\C $ and consider 
the Weyl groups $ W_{\frk_\C} $ and $ W_{\frr_\C} $ acting on this Cartan subalgebra.  
Since $ \rank \frk_\C = 2 n $ and $ \rank \frr_\C = n $, 
$ \frt_\C $ is not a Cartan subalgebra of $ \frk_\C $.  
We find both of $ W_{\frk_\C} $ and $ W_{\frr_\C} $ are isomorphic to $ W_{C_n} $ so that $  |W_{\frk_\C}/W_{\frr_\C}| = 1 $ and $ |W_{\itK}/W_R| = 1 $.  
Thus, Cartan-Borel theorem tells 
\begin{align*}
\dim H_{dR}(\itG/\itP) &= \dim H_{dR}(\itK/R) = 2^{\rank \frk_\C - \rank \frr_\C} |W_{\itK}/W_R| = 2^n. 
\end{align*}
On the other hand, from Theorem~\ref{thm:dim.Poincare.series.of.cohomology}, we get the Poincar\'{e} series.

\begin{thm}\label{thm:Poincare.series.SOstar4n}
The Poincar\'{e} series of the symmetric space $ \UU(2n)/\USp(2n) $ is given by
\begin{align*}
P(\operatorname{HGr}(\bbH^{n}), q) = P(\UU(2n)/\USp(2n), q) 
&= \prod_{k = 0}^{n - 1} (1 + q^{4 k + 1}) 
\end{align*}
\end{thm}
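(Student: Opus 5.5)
The plan is to compute the Poincar\'{e} series through the Cartan--de Rham theorem. First, Theorem~\ref{thm:Cartan-deRham} gives $H_{dR}(\UU(2n)/\USp(2n)) \simeq (\twedge(\frq_\C))^R$. Next, Lemma~\ref{lemma:qC.isomorphic.to.nC} identifies $\frq_\C \simeq \frn_\C = \Alt_{2n}(\C)$ as $\frr_\C$-modules. Since $R = \USp(2n)$ is connected, the $R$-invariants coincide with the $\Sp_{2n}(\C)$-invariants. The isomorphism of Lemma~\ref{lemma:qC.isomorphic.to.nC} is induced by $x\mapsto x+\theta x$, which preserves exterior degree. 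Hence the graded dimension of the cohomology equals the graded dimension of
\[
\bigoplus_{\lambda\in\widetilde{\Xi}_{2n}} V_\lambda^{\Sp_{2n}(\C)},
\]
where $V_\lambda$ sits in degree $|\lambda|/2 = |\eta|$ by Lemma~\ref{lem:SO4n.wedgeAlt}, and each such invariant space is at most one-dimensional. So the Poincar\'{e} series is $\sum q^{|\eta|}$, summed over those $\lambda=[\mu;\eta]\in\widetilde{\Xi}_{2n}$ that satisfy the condition of Lemma~\ref{lem:V.with.USp.invariants}.

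The combinatorial step mirrors the proof of Theorem~\ref{thm:Poincare.series.Sp2n:n.even}, transposed. By Lemma~\ref{lem:V.with.USp.invariants}, $\eta$ is a strict partition whose parts group consecutively, starting at odd positions, into pairs $(2k+1,2k)$. A final unpaired part can only be $1$, i.e.\ the pair with $k=0$, where the part $0$ is not recorded. Each such block has $|\eta|$-contribution $4k+1$, and distinct blocks have distinct $k$. Strictness of $\eta$ forces the blocks to appear in decreasing order of $k$. Conversely, any finite set of distinct $k$'s yields an admissible $\eta$, because listing the blocks in decreasing order keeps the parts strictly decreasing.

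It remains to check the range constraint from Lemma~\ref{lem:SO4n.wedgeAlt}, namely $\eta_1<N=2n$. This means $2k+1\le 2n-1$, i.e.\ $k\le n-1$. The length constraint $\ell(\eta)<2n$ is then automatic, since at most $n$ blocks give $\ell(\eta)\le 2n-1$. Therefore the admissible $\eta$ correspond bijectively to subsets of $\{0,1,\dots,n-1\}$, and summing $q^{|\eta|}$ gives $\prod_{k=0}^{n-1}(1+q^{4k+1})$.

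As a sanity check, setting $q=1$ gives $2^n$, matching the Cartan--Borel dimension computed just before the statement. The main point needing care is the correct reading of Lemma~\ref{lem:V.with.USp.invariants} in the odd-length case, together with the exact bound on $k$. I would verify both directly: $\lambda=[\mu;\eta]$ with $\mu=\eta-(1^\ell)$ must have $\transpose{\lambda}$ even and must fit in the $2n\times(2n-1)$ box, and small cases such as $n=1$ (giving $1+q$) and $n=2$ confirm the bound.
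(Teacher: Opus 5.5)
Your proposal is correct and follows essentially the paper's route: Cartan--de Rham together with $\frq_\C\simeq\frn_\C$ reduces the problem to the $\Sp_{2n}(\C)$-invariants in Howe's decomposition of $\twedge(\Alt_{2n}(\C))$, and the transposed block-counting argument from the $\UU(n)/\OO(n)$ case then gives the product. In that argument the blocks $(2k+1,2k)$ of $\eta$ contribute degree $4k+1$, and the $2n\times(2n-1)$ box forces $k\le n-1$. Your explicit treatment of the odd-length case, where the final unpaired part of $\eta$ must be $1$, usefully makes precise a point that the wording ``for any $j\le \ell(\eta)/2$'' in Lemma~\ref{lem:V.with.USp.invariants} leaves implicit.
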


\begin{proof}
The proof is almost the same (making transpose of the partitions) as in the case of $ \UU(2n)/\OO(2n) $.  
\end{proof}

\bigskip
\bigskip


\printbibliography 

@article{Avdeev.Petukhov.2020,
 author = {Avdeev, Roman and Petukhov, Alexey},
 title = {Branching rules related to spherical actions on flag varieties},
 fjournal = {Algebras and Representation Theory},
 journal = {Algebr. Represent. Theory},
 issn = {1386-923X},
 volume = {23},
 number = {3},
 pages = {541--581},
 year = {2020},
 language = {English},
 doi = {10.1007/s10468-019-09857-9},
 zbMATH = {7210918},
 Zbl = {1454.20089}
}

@article{Benson.Ratcliff.1996,
 author = {Benson, Chal and Ratcliff, Gail},
 title = {A classification of multiplicity free actions},
 fjournal = {Journal of Algebra},
 journal = {J. Algebra},
 issn = {0021-8693},
 volume = {181},
 number = {1},
 pages = {152--186},
 year = {1996},
 language = {English},
 doi = {10.1006/jabr.1996.0113},
 zbMATH = {883753},
 Zbl = {0869.14021}
}

@article{Borel.1953,
  title={Sur la cohomologie des espaces fibr{\'e}s principaux et des espaces homogenes de groupes de Lie compacts},
  author={Borel, Armand},
  journal={Annals of Mathematics},
  volume={57},
  number={1},
  pages={115--207},
  year={1953}
}

@misc{Brion.1989,
    author = {Brion, Michel},
    title = {Spherical varieties: {An} introduction},
    year = {1989},
    language = {English},
    howpublished = {Topological methods in algebraic transformation groups, {Proc}. {Conf}., {New} {Brunswick}/{NJ} ({USA}) 1988, {Prog}. {Math}. 80, 11-26.},
    zbMATH = {4193898},
    Zbl = {0724.14034}
}

@article{CGKP.2025,
title = {Clifford algebra analogue of Cartan's theorem for symmetric pairs},
author = {Kieran Calvert and Karmen Grizelj and Andrey Krutov and Pavle Pandžić},
journal = {Advances in Mathematics},
volume = {503B},
eid = {111196},
year = {2026},
issn = {0001-8708},
doi = {https://doi.org/10.1016/j.aim.2026.111196},
url = {https://www.sciencedirect.com/science/article/pii/S0001870826004172},
label = "CGKP",
}

@misc{CNP.arxiv2023,
 author = {Kieran Calvert and Kyo Nishiyama and Pavle Pand{\v{z}}i{\'c}},
 title = {Clifford algebras, symmetric spaces and cohomology rings of {Grassmannians}},
 year = {2023},
 url = {https://arxiv.org/abs/2310.04839},
 archivePrefix = {arXiv},
 eprint = {2310.04839},
 arXiv = {arXiv:2310.04839},
 primaryClass={math.RT},
label = "CNP",
}

@incollection{Fresse.Nishiyama.Overview,
 author = {Fresse, Lucas and Nishiyama, Kyo},
 title = {Overview on the theory of double flag varieties for symmetric pairs},
 booktitle = {Representations and characters. Revisiting the works of Harish-Chandra and Andr\'e Weil. Selected papers based on the presentations at the conference, IMS, Singapore, July 1--15, 2022},
 isbn = {978-981-98-2543-1; 978-981-98-2545-5},
 pages = {137--224},
 year = {2026},
 publisher = {Singapore: World Scientific},
 language = {English},
 doi = {10.1142/9789819825448_0005},
 zbMATH = {8200661}
}

@article{HNOO.2013,
  author = {Xuhua He and Kyo Nishiyama and Hiroyuki Ochiai and Yoshiki Oshima},
  title = {On orbits in double flag varieties for symmetric pairs}, 
   JOURNAL = {Transform. Groups},
  FJOURNAL = {Transformation Groups},
    VOLUME = {18},
      YEAR = {2013},
     PAGES = {1091--1136},
label = "HNOO",
}

@book{CCC.III.1976,
 author = {Greub, Werner and Halperin, Stephen and Vanstone, Ray},
 title = {Connections, curvature, and cohomology. {Vol}. {III}: {Cohomology} of principal bundles and homogeneous spaces},
 fseries = {Pure and Applied Mathematics (Academic Press)},
 series = {Pure Appl. Math., Academic Press},
 issn = {0079-8169},
 volume = {47},
 year = {1976},
 publisher = {Academic Press, New York, NY},
 language = {English},
 zbMATH = {3579627},
 Zbl = {0372.57001}
}

@incollection{Howe.SchurLecture.1995,
 author = {Howe, Roger},
 title = {Perspectives on invariant theory: {Schur} duality, multiplicity-free actions and beyond},
 booktitle = {The Schur lectures (1992)},
 pages = {1--182},
 year = {1995},
 publisher = {Ramat-Gan: Bar-Ilan University; Providence, RI: American Mathematical Society (Distrib.)},
 language = {English},
 zbMATH = {796484},
 Zbl = {0844.20027}
}

@book{HPbook,
 author = {Huang, Jing-Song and Pand{\v{z}}i{\'c}, Pavle},
 title = {Dirac operators in representation theory},
 isbn = {0-8176-3218-2},
 year = {2006},
 publisher = {Basel: Birkh{\"a}user},
 language = {English},
 zbMATH = {5044774},
 Zbl = {1103.22008}
}

@article{HPRarXiv,
      title={Dirac operators and Lie algebra cohomology}, 
      author={Jing-Song Huang and Pavle Pandžić and David Renard},
      year={2005},
      eprint={math/0503582},
      archivePrefix={arXiv},
      primaryClass={math.RT},
      url={https://arxiv.org/abs/math/0503582}, 
}

@article{Kac.1980,
 author = {Kac, Victor G.},
 title = {Some remarks on nilpotent orbits},
 fjournal = {Journal of Algebra},
 journal = {J. Algebra},
 issn = {0021-8693},
 volume = {64},
 pages = {190--213},
 year = {1980},
 language = {English},
 doi = {10.1016/0021-8693(80)90141-6},
 zbMATH = {3670645},
 Zbl = {0431.17007}
}

@article{Kobayashi.Nagano.1964,
  author  = {Kobayashi, Shoshichi and Nagano, Tadashi},
  title   = {On filtered {L}ie algebras and geometric structures. {I}},
  journal = {Journal of Mathematics and Mechanics},
  volume  = {13},
  year    = {1964},
  pages   = {875--907},
  mrnumber = {0168704},
  url     = {https://www.jstor.org/stable/24901239}
}

@article{Kobayashi.Nagano.1965,
  author  = {Kobayashi, Shoshichi and Nagano, Tadashi},
  title   = {On filtered {L}ie algebras and geometric structures. {II}},
  journal = {Journal of Mathematics and Mechanics},
  volume  = {14},
  year    = {1965},
  pages   = {513--521},
  mrnumber = {0185042},
  url     = {https://www.jstor.org/stable/24901294}
}

@article{Knapp.1975,
 author = {Knapp, Anthony W.},
 title = {Weyl group of a cuspidal parabolic},
 fjournal = {Annales Scientifiques de l'{\'E}cole Normale Sup{\'e}rieure. Quatri{\`e}me S{\'e}rie},
 journal = {Ann. Sci. {\'E}c. Norm. Sup{\'e}r. (4)},
 issn = {0012-9593},
 volume = {8},
 pages = {275--294},
 year = {1975},
 language = {English},
 doi = {10.24033/asens.1288},
 url = {https://eudml.org/doc/81957},
 zbMATH = {3477435},
 Zbl = {0305.22010}
}

@incollection{Knop.MFA.1998,
 author = {Knop, Friedrich},
 title = {Some remarks on multiplicity free spaces},
 booktitle = {Representation theories and algebraic geometry. Proceedings of the NATO Advanced Study Institute, Montreal, Canada, July 28--August 8, 1997},
 isbn = {0-7923-5193-2},
 pages = {301--317},
 year = {1998},
 publisher = {Dordrecht: Kluwer Academic Publishers},
 language = {English},
 zbMATH = {1215406},
 Zbl = {0915.20021}
}

@article {Kostant.I.1961,
    AUTHOR = {Kostant, Bertram},
     TITLE = {Lie algebra cohomology and the generalized {B}orel-{W}eil
              theorem},
   JOURNAL = {Ann. of Math. (2)},
  FJOURNAL = {Annals of Mathematics. Second Series},
    VOLUME = {74},
      YEAR = {1961},
     PAGES = {329--387},
      ISSN = {0003-486X},
   MRCLASS = {22.60 (22.70)},
  MRNUMBER = {142696},
MRREVIEWER = {A.\ Borel},
       DOI = {10.2307/1970237},
       URL = {https://doi.org/10.2307/1970237},
}

@article{Kostant.LMP.2000,
 author = {Kostant, Bertram},
 title = {A generalization of the {Bott}-{Weil} theorem and {Euler} number multiplets of representations},
 fjournal = {Letters in Mathematical Physics},
 journal = {Lett. Math. Phys.},
 issn = {0377-9017},
 volume = {52},
 number = {1},
 pages = {61--78},
 year = {2000},
 language = {English},
 doi = {10.1023/A:1007653819322},
 zbMATH = {1545427},
 Zbl = {0960.22011}
}

@article{Leahy.1998,
 author = {Leahy, Andrew S.},
 title = {A classification of multiplicity free representations},
 fjournal = {Journal of Lie Theory},
 journal = {J. Lie Theory},
 issn = {0949-5932},
 volume = {8},
 number = {2},
 pages = {367--391},
 year = {1998},
 language = {English},
 url = {https://eudml.org/doc/229504},
 zbMATH = {1211709},
 Zbl = {0910.22015}
}

@book{Macdonald.1995,
  author    = {Macdonald, Ian G.},
  title     = {Symmetric Functions and {Hall} Polynomials},
  edition   = {2},
  series    = {Oxford Mathematical Monographs},
  publisher = {The Clarendon Press, Oxford University Press},
  address   = {New York},
  year      = {1995},
  pages     = {x+475},
  isbn      = {0-19-853489-2},
  doi       = {10.1093/oso/9780198534891.001.0001},
  note      = {With contributions by A. Zelevinsky},
  mrnumber  = {1354144}
}

@article{Nagano.1965,
  author  = {Nagano, Tadashi},
  title   = {Transformation groups on compact symmetric spaces},
  journal = {Trans. Amer. Math. Soc.},
  volume  = {118},
  year    = {1965},
  pages   = {428--453},
  doi     = {10.2307/1993971},
  mrnumber = {0182937}
}

@book{Onishchik.1994,
 author = {Onishchik, Arkadi L.},
 title = {Topology of transitive transformation groups},
 isbn = {3-335-00355-1},
 year = {1994},
 publisher = {Leipzig: Johann Ambrosius Barth},
 language = {English},
 zbMATH = {523999},
 Zbl = {0796.57001}
}

@article{Pecher.2012,
 author = {Pecher, Tobias},
 title = {Classification of skew multiplicity-free modules.},
 fjournal = {Transformation Groups},
 journal = {Transform. Groups},
 issn = {1083-4362},
 volume = {17},
 number = {1},
 pages = {233--257},
 year = {2012},
 language = {English},
 doi = {10.1007/s00031-011-9171-4},
 zbMATH = {6050318},
 Zbl = {1257.20048}
}

@article{RRS.1992,
 author = {Richardson, Roger and R{\"o}hrle, Gerhard and Steinberg, Robert},
 title = {Parabolic subgroups with {Abelian} unipotent radical},
 fjournal = {Inventiones Mathematicae},
 journal = {Invent. Math.},
 issn = {0020-9910},
 volume = {110},
 number = {3},
 pages = {649--671},
 year = {1992},
 language = {English},
 doi = {10.1007/BF01231348},
 url = {https://eudml.org/doc/144066},
 zbMATH = {224071},
 Zbl = {0786.20029}
}

@book {Stanley.vol1.2012,
    AUTHOR = {Stanley, Richard P.},
     TITLE = {Enumerative combinatorics. {V}olume 1},
    SERIES = {Cambridge Studies in Advanced Mathematics},
    VOLUME = {49},
   EDITION = {Second},
 PUBLISHER = {Cambridge University Press, Cambridge},
      YEAR = {2012},
     PAGES = {xiv+626},
      ISBN = {978-1-107-60262-5},
   MRCLASS = {05-02 (05A15 06-02)},
  MRNUMBER = {2868112},
}

@article {Takeuchi.1962,
    AUTHOR = {Takeuchi, Masaru},
     TITLE = {On {P}ontrjagin classes of compact symmetric spaces},
   JOURNAL = {J. Fac. Sci. Univ. Tokyo Sect. I},
  FJOURNAL = {Journal of the Faculty of Science. University of Tokyo.
              Section I},
    VOLUME = {9},
      YEAR = {1962},
     PAGES = {313--328},
      ISSN = {0368-2269},
   MRCLASS = {57.32 (57.45)},
  MRNUMBER = {145009},
MRREVIEWER = {S.\ Murakami},
}

@article {Cart50,
    AUTHOR = {Cartan, Henri},
     TITLE = {La Trasngression dans un group de {L}ie et dans un fibr\'{e} principal},
   JOURNAL = {Collogue de topologie (espaces fibr\'{e}s)},
  FJOURNAL = {},
    VOLUME = {},
      YEAR = {1950},
    NUMBER = {},
     PAGES = {73--81},
      ISSN = {},
   MRCLASS = {},
  MRNUMBER = {},
       DOI = {},
       URL = {},
}

@article{Casian.Kodama.2013,
      title={On the cohomology of real {G}rassmann manifolds}, 
      author={Luis Casian and Yuji Kodama},
      year={2013},
 archivePrefix = {arXiv},
 eprint = {1309.5520},
 arXiv = {arXiv:1309.5520},
      primaryClass={math.AG}
}

@article {Goette.MZ.1999,
    AUTHOR = {Goette, Sebastian},
     TITLE = {Equivariant {$\eta$}-invariants on homogeneous spaces},
   JOURNAL = {Math. Z.},
  FJOURNAL = {Mathematische Zeitschrift},
    VOLUME = {232},
      YEAR = {1999},
    NUMBER = {1},
     PAGES = {1--42},
      ISSN = {0025-5874,1432-1823},
   MRCLASS = {58J28 (58J20)},
  MRNUMBER = {1714278},
MRREVIEWER = {Kai\ K\"ohler},
       DOI = {10.1007/PL00004757},
       URL = {https://doi.org/10.1007/PL00004757},
}

@book {beyond,
    AUTHOR = {Knapp, Anthony W.},
     TITLE = {Lie groups beyond an introduction},
    SERIES = {Progress in mathematics },
    VOLUME = {140},
 PUBLISHER = {Birkh{\"a}user},
      YEAR = {1996},
     PAGES = {},
      ISBN = {0817639268},
   MRCLASS = {},
  MRNUMBER = {},
MRREVIEWER = {},
       DOI = {},
       URL = {https://doi.org/},
}

@article {Kostant.cubic.1999,
    AUTHOR = {Kostant, Bertram},
     TITLE = {A cubic {D}irac operator and the emergence of {E}uler number
              multiplets of representations for equal rank subgroups},
   JOURNAL = {Duke Math. J.},
  FJOURNAL = {Duke Mathematical Journal},
    VOLUME = {100},
      YEAR = {1999},
    NUMBER = {3},
     PAGES = {447--501},
      ISSN = {0012-7094,1547-7398},
   MRCLASS = {22E46 (19L64 20C35 58J05)},
  MRNUMBER = {1719734},
MRREVIEWER = {A.\ L.\ Onishchik},
       DOI = {10.1215/S0012-7094-99-10016-0},
       URL = {https://doi.org/10.1215/S0012-7094-99-10016-0},
}

@article{TK1968,
title={Minimal imbeddings of R-spaces},
author={Takeuchi, Masaru and  Kobayashi, Shoshichi}, 
journal={J. Differential Geometry},  
volume={2},  
year={1968}, 
pages={203--215},
}
\printindex
\end{document}